\DeclareFontFamily{OT1}{rsfs}{}
\DeclareFontShape{OT1}{rsfs}{n}{it}{<-> rsfs10}{}
\DeclareMathAlphabet{\curly}{OT1}{rsfs}{n}{it}
\newcommand{\eqnum}{\refstepcounter{equation}\textup{\tagform@{\theequation}}}
\renewcommand\;{\hspace{.6pt}}
\renewcommand\P[1]{{\mathbb P}^{\;#1}}
\newcommand\PP{\mathbb P}
\newcommand\LL{\mathbb L}
\newcommand\C{\mathbb C}
\newcommand\Q{\mathbb Q}
\newcommand\Z{\mathbb Z}
\renewcommand\t{\mathfrak t}
\newcommand\cB{\mathcal B}
\newcommand\cE{\mathcal E}
\newcommand\cF{\mathcal F}
\newcommand\cM{\mathcal M}
\newcommand\cN{\mathcal N}
\newcommand\cO{\mathcal O}
\newcommand\cP{\mathcal P}
\newcommand\udot{^{\bullet}}
\newcommand\E{\mathsf E}
\newcommand\EE{\curly E}
\newcommand\tP{\widetilde P}
\newcommand{\so}{\ \ext@arrow 0359\Rightarrowfill@{}{\hspace{3mm}}\ }
\newcommand{\rt}[1]{\xrightarrow{\ #1\ }}
\newcommand\To{\longrightarrow}
\newcommand\into{\hookrightarrow}
\newcommand\INTO{\ \ar@{^(->}[r]<-.2ex>}
\newcommand{\Into}{\ensuremath{\lhook\joinrel\relbar\joinrel\rightarrow}}
\newcommand\Mapsto{\ensuremath{\shortmid\joinrel\relbar\joinrel\rightarrow}}
\renewcommand\_{^{}_}
\newcommand\take{\backslash}
\newcommand{\mat}[4]{\left(\begin{array}{cc} \!\!#1 & #2\!\! \\ \!\!#3 &
#4\!\!\end{array}\right)}
\newfont{\bigtimesfont}{cmsy10 scaled \magstep5}
\newcommand{\bigtimes}{\mathop{\lower0.9ex\hbox{\bigtimesfont\symbol2}}}
\newcommand\VW{\mathsf{VW}}
\newcommand\vw{\mathsf{vw}}
\newcommand\uvw{\widetilde{\mathsf{vw}}}
\newcommand\Gr{\operatorname{Gr}}
\newcommand\rk{\operatorname{rank}}
\newcommand\vir{\operatorname{vir}}
\newcommand\vd{\operatorname{vd}}
\newcommand\tr{\operatorname{tr}}
\newcommand\id{\operatorname{id}}
\newcommand\ev{\operatorname{ev}}
\newcommand\Hom{\operatorname{Hom}}
\renewcommand\hom{\curly H\!om}
\newcommand\End{\operatorname{End}}
\newcommand\Ext{\operatorname{Ext}}
\newcommand\ext{\curly Ext}
\newcommand\aut{\mathfrak{aut}}
\newcommand\Aut{\operatorname{Aut}}
\newcommand\Pic{\operatorname{Pic}}
\newcommand\Jac{\operatorname{Jac}}
\newcommand\Spec{\operatorname{Spec}\,}
\newcommand\Hilb{\operatorname{Hilb}}
\newcommand\arXiv[1]{\href{http://arxiv.org/abs/#1}{arXiv:#1}}
\newcommand\mathAG[1]{\href{http://arxiv.org/abs/math/#1}{math.AG/#1}}
\newcommand\hepth[1]{\href{http://arxiv.org/abs/hep-th/#1}{hep-th/#1}}
\DeclareRobustCommand{\SkipTocEntry}[3]{}
\newcommand\@dotsep{4.5}
\def\@tocline#1#2#3#4#5#6#7{\relax
  \ifnum #1>\c@tocdepth 
  \else
    \par \addpenalty\@secpenalty\addvspace{#2}%
    \begingroup \hyphenpenalty\@M
    \@ifempty{#4}{%
      \@tempdima\csname r@tocindent\number#1\endcsname\relax
    }{%
      \@tempdima#4\relax
    }%
    \parindent\z@ \leftskip#3\relax \advance\leftskip\@tempdima\relax
    \rightskip\@pnumwidth plus1em \parfillskip-\@pnumwidth
    #5\leavevmode #6\relax
    \leaders\hbox{$\m@th
      \mkern \@dotsep mu\hbox{.}\mkern \@dotsep mu$}\hfill
    \hbox to\@pnumwidth{\@tocpagenum{#7}}\par
    \nobreak
    \endgroup
  \fi}
\newcommand\beq[1]{\begin{equation}\label{#1}}
\newcommand\eeq{\end{equation}}
\newcommand\beqa{\begin{eqnarray*}}
\newcommand\eeqa{\end{eqnarray*}}
\makeatletter \@addtoreset{equation}{section} \makeatother
\renewcommand{\theequation}{\thesection.\arabic{equation}}
\newtheorem{defn}[equation]{Definition}
\newtheorem{conj}[equation]{Conjecture}
\newtheorem{thm}[equation]{Theorem}
\newtheorem{lem}[equation]{Lemma}
\newtheorem{prop}[equation]{Proposition}
\newtheorem{rmk}[equation]{Remark}
\title[Vafa-Witten invariants II: semistable case]{Vafa-Witten invariants for projective surfaces II: semistable case}
\author[Y. Tanaka and R. P. Thomas]{Yuuji Tanaka and Richard P. Thomas}
\begin{document}
\maketitle \vspace{-5mm}
\begin{center}
\emph{Dedicated to Simon Donaldson, with admiration and thanks}
\end{center}

\begin{abstract} \noindent
We propose a definition of Vafa-Witten invariants counting semistable Higgs pairs on a polarised surface.
We use virtual localisation applied to Mochizuki/Joyce-Song pairs.

For $K_S\le0$ we expect our definition coincides with an alternative definition using weighted Euler characteristics. We prove this for $\deg K_S<0$ here, and it is proved for $S$ a K3 surface in \cite{MT}.

For K3 surfaces we calculate the invariants in terms of modular forms which generalise and prove conjectures of Vafa and Witten.
\end{abstract}
\renewcommand\contentsname{\vspace{-9mm}}
\tableofcontents \vspace{-1cm}


\section{Introduction}

On a polarised surface $(S,\cO_S(1))$ there is a Hitchin-Kobayashi correspondence between solutions of the $U(r)$ Vafa-Witten equations and slope polystable Higgs pairs
$$
(E,\phi), \quad \phi\,\in\,\Hom(E,E\otimes K_S),
$$
with $E$ is a holomorphic bundle 
of rank $r$ on $S$. To partially compactify the moduli space we use Gieseker semistable Higgs sheaves; see Section \ref{Gies} for definitions and \cite[Introduction]{TT1} for a much more detailed account.

Via the spectral construction, Gieseker (semi)stable Higgs pairs $(E,\phi)$ are equivalent to compactly supported Gieseker (semi)stable torsion sheaves $\cE$ on $X=K_S$. That is, given $(E,\phi)$, the sheaf of eigenspaces of $\phi$ --- supported over their respective eigenvalues in $K_S$ --- defines a torsion sheaf
$$
\cE_\phi \quad\mathrm{on}\ \ X=K_S.
$$
Letting $\pi\colon X=K_S\to S$ be the projection, the inverse construction is
$$
(E,\phi)\ =\ \Big(\pi_*\;\cE_\phi\;,\ \pi_*(\eta\cdot\id_{\cE_\phi})\Big),
$$
where $\eta$ is the tautological section of $\pi^*K_S$ on $X=K_S$.

\subsection{Stable case}
Fixing the Chern classes $r,\,c_1,\,c_2\in H^*(S)$ of $E$ on $S$ --- which is equivalent to fixing the topological type of $\cE_\phi$ on $X$ --- 
there is a quasi-projective moduli space $\cN_{r,c_1,c_2}$ of Gieseker semistable Higgs pairs. It is noncompact, but the obvious $\C^*$ action (scaling $\phi$, or equivalently acting on the moduli space of torsion sheaves on $X$ by scaling $K_S$) has projective $\C^*$-fixed locus. When the Chern classes are chosen so that stability and semistability coincide (for instance if the rank and degree of $E$ are coprime) there is a symmetric obstruction theory \cite{TT1} and we can define a $U(r)$ Vafa-Witten invariant by virtual localisation \cite{GP}. It is just a local DT invariant of $X$ counting the stable torsion sheaves $\cE_\phi$.

However, this invariant vanishes unless $H^{0,1}(S)=0=H^{0,2}(S)$. It is much more interesting to consider an analogue of $SU(r)$ Vafa-Witten theory by picking a line bundle $L$ on $S$ and fixing
$$
\det E\ =\ L \quad\mathrm{and}\quad \tr\phi=0
$$
on $S$. On $X$ this amounts to fixing the centre of mass of the support of $\cE_\phi$ on each fibre of $\pi\colon X=K_S\to S$ to be 0, and $\det\pi_*\;\cE_\phi=L$.

In \cite{TT1} it is shown that the resulting moduli space $\cN^\perp_{r,L,c_2}$  also carries a symmetric obstruction theory, so we can define an $SU(r)$ Vafa-Witten invariant by virtual localisation to the compact $\C^*$-fixed locus,
\beq{virtdef}
\VW_{r,L,c_2}\ :=\ \int_{\big[(\cN^\perp_{r,L,c_2})^{\C^*}\big]^{\vir\ }}
\frac1{e(N^{\vir})}\ \in\ \Q.
\eeq
This defines deformation invariant rational numbers whose generating series are expected to give modular forms.

\subsection{Semistable case}
In this paper we describe an extension of this theory in the presence of \emph{strictly semistable Higgs pairs}.
Following ideas of Mochizuki \cite[Section 7.3.1]{Mo} and Joyce-Song \cite{JS}, we rigidify semistable sheaves (which may have nontrivial automorphisms) by taking sections. We define invariants $P^\perp_{r,L,c_2}(n)$ virtually enumerating certain pairs
$$
(\cE_\phi,s)
$$
on $X=K_S$ (or equivalently stable triples $(E,\phi,s)$ on $S$). Here the torsion sheaf $\cE_\phi$ is semistable, has centre of mass zero on the $K_S$ fibres (equivalently $\tr\phi=0$), $\det\pi_*\;\cE_\phi=\det E\cong L$, and
$$
s\ \in\ H^0(X,\cE_\phi(n))\ \cong\ H^0(S,E(n))
$$
does not factor through any subsheaf of $\cE_\phi(n)$ with the same reduced Hilbert polynomial. (Here $n\gg0$ is fixed so that $H^{\ge1}(\cE_\phi(n))=0$ for all semistable sheaves $\cE_\phi$ of class $(r,L,c_2)$.) The moduli space $\cP^\perp_{r,L,c_2}$ admits a symmetric obstruction theory, given by combining the $R\Hom\_\perp$ perfect obstruction theory for $(E,\phi)$ of \cite{TT1} with Joyce-Song's pairs theory; see Section \ref{virtpairs} and particularly \eqref{splits} for more details. We then use virtual $\C^*$ localisation to define invariants 
$$
P^\perp_{r,L,c_2}(n)\ :=\ \int_{\big[(\cP^\perp_{r,L,c_2})^{\C^*}\big]^{\vir}}\ \frac1{e(N^{\vir})}\,.
$$
We conjecture these invariants can be written in terms of universal formulae in $n$ with coefficients given by --- and defining --- Vafa-Witten invariants. Let $\alpha=(r,c_1(L),c_2)$ denote the charge; see \eqref{alph} for a full explanation of the notation.

\begin{conj} \label{pechconj}
If $H^{0,1}(S)=0=H^{0,2}(S)$ there exist $\VW_{\alpha_i}(S)\in\Q$ such that
\beq{desk}
P^\perp_{\alpha}(n)\ =\ \mathop{\sum_{\ell\ge 1,\,(\alpha_i=\delta_i\alpha)_{i=1}^\ell:}}_{\delta_i>0,\ \sum_{i=1}^\ell\delta_i=1}
\frac{(-1)^\ell}{\ell!}\prod_{i=1}^\ell(-1)^{\chi(\alpha_i(n))} \chi(\alpha_i(n))\;\VW_{\alpha_i}(S)
\eeq
for $n\gg0$.
When either of $H^{0,1}(S)$ or $H^{0,2}(S)$ is nonzero we take only the first term in the sum:
\beq{desk2}
P^\perp_{r,L,c_2}(n)\ =\ (-1)^{\chi(\alpha(n))-1}\chi(\alpha(n))\VW_{r,L,c_2}(S).
\eeq
\end{conj}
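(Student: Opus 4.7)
The plan is to recognise the formula \eqref{desk} as a Joyce-Song/Mochizuki-style wall-crossing identity expressing pair invariants in terms of underlying sheaf invariants, and then \emph{define} the $\VW_\alpha(S)$ inductively so that \eqref{desk} holds; the genuine content of the conjecture is that the resulting numbers are independent of $n$. In the stable/coprime case, where strictly semistable Higgs pairs do not exist, all decompositions with $\ell\ge 2$ should give empty contributions, and one checks that \eqref{desk} collapses to the single $\ell=1$ term and recovers the integral \eqref{virtdef} via the classical Joyce-Song pairs formula in its symmetric obstruction theory form.

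More concretely, fix $n\gg0$ and observe that the right-hand side of \eqref{desk} contains the distinguished term $\ell=1$, $\alpha_1=\alpha$, contributing $(-1)^{\chi(\alpha(n))-1}\chi(\alpha(n))\VW_\alpha(S)$, while every other term involves only $\VW_{\alpha_i}(S)$ with $\alpha_i=\delta_i\alpha$ for some $\delta_i<1$. Since $\chi(\alpha(n))$ is a polynomial in $n$ of positive degree, it is nonzero for $n\gg0$, so \eqref{desk} can be solved rationally to give a candidate $\VW_\alpha(S;n)$ in terms of the $P^\perp_{\alpha_i}(n)$ and the inductively-defined $\VW_{\alpha_j}(S;n)$ with $\alpha_j=\delta_j\alpha$, $\delta_j<1$. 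The task is then to prove $\VW_\alpha(S;n)$ is independent of $n$.

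I would establish $n$-independence by a master space argument comparing $P^\perp_\alpha(n)$ and $P^\perp_\alpha(n+1)$: interpolate between them using a family of weak stability conditions on the category of triples $(E,\phi,s)$ in which the section lies in $H^0(E(n))\oplus H^0(E(n+1))$, and compute the change in the virtually-localised count across each wall. In \cite{JS} the correction terms are governed precisely by the combinatorial kernel $\frac{(-1)^\ell}{\ell!}\prod(-1)^{\chi(\alpha_i(n))}\chi(\alpha_i(n))$ appearing in \eqref{desk}. Transporting that computation to the present setting requires the full splitting \eqref{splits} of the perfect obstruction theory of $\cP^\perp_{\alpha}$ into an $R\Hom_\perp$-piece controlling the Higgs data and a Joyce-Song pairs-piece controlling the section, and further requires that all of this survive virtual $\C^*$-localisation on the $\C^*$-fixed locus of $\cN^\perp$.

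The main obstacle is carrying out this wall-crossing $\C^*$-equivariantly and virtually on the noncompact threefold $X=K_S$, where the master space itself acquires strictly semistable loci at the walls. One needs a version of Joyce's universal wall-crossing formula compatible with symmetric obstruction theories and localised Behrend integrals, which is essentially what Mochizuki builds in \cite[Section 7]{Mo}; the work is to check that the traceless/perpendicular conditions are preserved throughout the master space construction, so that one stays inside the $R\Hom_\perp$-theory of \cite{TT1}. Finally, the second case of the conjecture, with $H^{0,1}(S)$ or $H^{0,2}(S)$ nonzero, should follow from the same formalism once one notes that a nontrivial holomorphic $1$- or $2$-form furnishes a global cosection of the obstruction theory on all $\ell\ge 2$ strata, forcing those contributions to vanish and collapsing \eqref{desk} to \eqref{desk2}.
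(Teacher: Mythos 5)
Be aware first that the statement you are addressing is stated in the paper as a \emph{conjecture}: the paper itself only establishes it in special cases (when stability and semistability coincide, Proposition \ref{stabletrue}; when $\deg K_S<0$, Theorem \ref{Kneg}; and for K3 surfaces via \cite{MT}), and explicitly says that a general proof is expected to come from an extension of the Kiem--Li wall-crossing \cite{KL2}. Your proposal correctly identifies where the content lies: for fixed $n$ the formula \eqref{desk} can always be solved inductively (on the divisibility of $\alpha$) for candidate numbers $\VW_\alpha(S;n)$, and the real assertion is their independence of $n$. Your heuristic for \eqref{desk2} --- a cosection coming from the $\Jac(S)$ action when $h^{0,1}>0$, and trivial summands of the trace-free obstruction spaces when $h^{0,2}>0$, killing the $\ell\ge2$ strata --- is essentially the motivation the authors themselves give.

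However, the central step of your argument is asserted rather than carried out, and it is precisely the part that is open. The Joyce--Song kernel $\frac{(-1)^\ell}{\ell!}\prod_i(-1)^{\chi(\alpha_i(n))}\chi(\alpha_i(n))$ in \eqref{sum} is derived through Hall-algebra and Behrend-function identities which have no established analogue for virtual localisation on the noncompact $X=K_S$; the paper stresses that even the \emph{existence} of a wall-crossing formula of this shape in the virtual setting was a surprise. A master space interpolating between $n$ and $n+1$ acquires strictly semistable loci at its walls whose virtual contributions cannot currently be evaluated by citing \cite{JS} or \cite{Mo} (Mochizuki works on the surface, without Higgs fields or the noncompact threefold), so ``transporting that computation to the present setting'' is the entire difficulty, not a checkable step. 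Similarly, the cosection argument for \eqref{desk2} must be made compatible with virtual $\C^*$-localisation on the fixed loci, which is not automatic. Finally, for the one case you do sketch (no strictly semistables), note that the argument is not simply ``the classical Joyce--Song pairs formula'': one must exhibit $\cP^\perp_\alpha$ as a $\PP^{\chi(\alpha(n))-1}$-bundle over $\cN^\perp_\alpha$, compare the two symmetric obstruction theories via the splitting \eqref{splits}, and compute the fibrewise equivariant integral \eqref{onefib} to produce the factor $(-1)^{\chi(\alpha(n))-1}\chi(\alpha(n))$; this is the content of Proposition \ref{stabletrue} and should not be elided.
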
\medskip

The formula \eqref{desk} is copied from Joyce-Song's universal formulae for Joyce-Song pair invariants. (We say something about how to understand it in Section \ref{Ur}.) But Joyce-Song's theory is based on Behrend-weighted Euler characteristics, instead of the virtual cycles we use. Because our moduli spaces are noncompact, there is no \emph{a priori} reason to expect the two theories to behave so similarly. Even the existence of a wall crossing formula like \eqref{desk} in the virtual setting was a surprise to us. The theories \emph{do} diverge when either of $H^{0,1}(S)$ or $H^{0,2}(S)$ is nonzero, with the Joyce-Song formula \eqref{desk} being replaced by what is essentially its logarithm \eqref{desk2}. \medskip

We prove the conjecture in some situations.
\begin{thm} Conjecture \ref{pechconj} holds when stability and semistability coincide; in this case the formulae recover the invariants $\VW_\alpha$ \eqref{virtdef} of \cite{TT1}.
\end{thm}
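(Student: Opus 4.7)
The plan is to first reduce the identity \eqref{desk} to its single $\ell=1$ summand. Under the stability~=~semistability hypothesis for class $\alpha$, suppose there exist $\ell>1$ and $\alpha_i=\delta_i\alpha$ (with $\delta_i>0$, $\sum\delta_i=1$) such that each $\alpha_i$ supports a semistable Higgs sheaf $\cE_i$ with matching determinant and trace-free data. Then $\bigoplus_i\cE_i\in\cN^\perp_\alpha$ is strictly semistable (its summand $\cE_1$ has the same reduced Hilbert polynomial as the whole), contradicting the hypothesis. Therefore for every $\ell>1$ decomposition at least one $\alpha_i$ has empty moduli, forcing $\VW_{\alpha_i}(S)=0$ in the only sensible convention; only the $\ell=1$ term in \eqref{desk} survives, reducing it to \eqref{desk2}. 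It thus suffices to prove
\[
P^\perp_{r,L,c_2}(n) \;=\; (-1)^{N-1}N\,\VW_{r,L,c_2}(S), \qquad N:=\chi(\alpha(n)).
\]

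Next, I would identify $\cP^\perp$ with a projective bundle. Since every semistable Higgs sheaf of class $\alpha$ is stable with $\Aut=\C^*$ acting on sections by scaling, a stable triple $(E,\phi,s)$ is determined up to equivalence by its image in $\cN^\perp$ and the class $[s]\in\PP(H^0(\cE(n)))$. For $n\gg0$, the universal direct image $\cV:=\pi_*\cE(n)$ is a vector bundle on $\cN^\perp$ of rank $N$, and $\cP^\perp_{r,L,c_2}\cong\PP(\cV)$. The splitting \eqref{splits} then identifies the $R\Hom_\perp$-summand of the symmetric obstruction theory on $\cP^\perp$ with the pullback of the one on $\cN^\perp$, and the ``pair'' summand with the two-term complex $[\cO\to\cV]$ paired with its Serre dual; this is the natural symmetric obstruction theory making $\PP(\cV)$ a virtual projective bundle over $\cN^\perp$.

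Virtual $\C^*$-localization on $\cP^\perp$ then reduces the computation to an equivariant fiber integral over $\PP(\cV)\to\cN^\perp$. The $\C^*$-fixed locus in the fiber $\PP(H^0(\cE(n)))$ at $[(E,\phi)]\in(\cN^\perp)^{\C^*}$ is the disjoint union $\bigsqcup_w\PP(H^0(\cE(n))_w)$ of projectivizations of $\C^*$-weight subspaces. Summing the equivariant contributions, with the symmetric structure \eqref{splits} pairing weight $w$ with $-w$, yields the universal fiber integral $(-1)^{N-1}N$ independent of the weight decomposition. Combined with the virtual localization on $\cN^\perp$ that defines $\VW_\alpha$ via \eqref{virtdef}, this completes the proof. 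The main obstacle is precisely this last fiber-integral computation: it requires a careful tracking of $\C^*$-weights through the splitting \eqref{splits} and amounts to a virtual-cycle analog of the Joyce-Song identity in which the Behrend-weighted Euler characteristic $\chi(\PP^{N-1})\cdot(\pm 1)=\pm N$ is replaced by its equivariant Chern-class counterpart, with the signs matching through the symmetric pairing.
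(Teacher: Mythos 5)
Your proposal follows essentially the same route as the paper's proof: reduce to the $\ell=1$ term by the direct-sum argument (the paper runs this as an induction on rank so that $\VW_{\alpha_i}=0$ for empty moduli is a consequence of the induction hypothesis rather than a convention), realize $\cP^\perp_\alpha$ as the projective bundle $\PP(\pi_*\;\EE_n)$ over $\cN^\perp_\alpha$, split the obstruction theory into the pullback part plus the relative pair part, and reduce everything to a fiber integral equal to $(-1)^{\chi(\alpha(n))-1}\chi(\alpha(n))$. The one step you flag as the main obstacle --- the equivariant fiber integral over the weight-decomposed fixed locus --- is dispatched in the paper by a one-line observation: the sum over fixed components of $\PP^{N-1}$ is exactly the virtual localization computation of the \emph{global} integral $\int_{\PP^{N-1}}e\big(T^*_{\PP}\otimes\t\big)=(-1)^{N-1}N$, so no weight-by-weight bookkeeping is required.
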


\begin{thm}
Conjecture \ref{pechconj} holds when $\deg K_S<0$ or when $S$ is a K3 surface \cite{MT}. 
\end{thm}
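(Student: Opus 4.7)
The plan is to treat only the case $\deg K_S<0$, since the K3 case is handled in \cite{MT}.

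\emph{Step 1: Vanishing of the Higgs field.}
If $(E,\phi)$ is a Gieseker semistable Higgs pair with $\phi\neq 0$, then $\im\phi$ is simultaneously a quotient of $E$ and a subsheaf of $E\otimes K_S$. Semistability of $E$ forces $\mu(\im\phi)\ge\mu(E)$, while semistability of $E\otimes K_S$ forces $\mu(\im\phi)\le\mu(E)+\deg K_S$. Under $\deg K_S<0$ these are incompatible, so $\phi\equiv0$ on the entire moduli space. Hence $\cN^\perp_{r,L,c_2}$ reduces to the (projective) Gieseker moduli of semistable sheaves on $S$ with fixed determinant $L$, and $\cP^\perp_{r,L,c_2}$ reduces to the Joyce--Song pair moduli of triples $(E,s)$ on $S$.

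\emph{Step 2: Localised virtual integral equals Behrend-weighted Euler characteristic.}
Since $\phi\equiv0$, the $\C^*$-action is trivial and the whole pair moduli is its own $\C^*$-fixed locus. At each point $(E,0,s)$ the symmetric obstruction theory of Section \ref{virtpairs} splits into a $\C^*$-weight-zero piece --- the Joyce--Song perfect obstruction theory for the pair $(E,s)$ on $S$ with trace-free sheaf deformations --- and a weight-one piece built from $R\Hom(E,E\otimes K_S)_0$, which provides the virtual normal bundle. Since $K_S$ is the dualising line bundle of $S$, Serre duality identifies the weight-one obstruction complex with the dual of the weight-one tangent complex, so the equivariant factor $1/e(N^{\vir})$ simplifies. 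A direct calculation, in the spirit of \cite{TT1} for the stable case, then identifies the localised integral with the Behrend-weighted Euler characteristic of the Joyce--Song pair moduli on $S$.

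\emph{Step 3: Joyce--Song wall-crossing.}
Having identified $P^\perp_\alpha(n)$ with the ordinary Joyce--Song pair invariant of the pushforward $\iota_*E$ on $X=K_S$, where $\iota\colon S\hookrightarrow X$ denotes the zero section, the universal formula \eqref{desk} is precisely the Joyce--Song identity \cite{JS} expressing pair invariants as polynomials in generalised Donaldson--Thomas invariants of $X$. The coefficients $\VW_{\alpha_i}(S)$ are read off as these generalised Donaldson--Thomas invariants, which by the same virtual-to-weighted-Euler comparison (applied without a section) coincide with our definition \eqref{virtdef} on the stable strata.

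\emph{Main obstacle.}
The delicate step is Step 2: the precise matching of the virtual $\C^*$-localisation integral with the Behrend-weighted Euler characteristic. One must track how the fixed-determinant and trace-free conditions interact with the Serre duality pairing of weight-one pieces, and how the Behrend function sign arises from a Koszul-type cancellation in the virtual normal bundle. The stable analogue is handled in \cite{TT1}, but here the additional subtlety is that the $\C^*$-fixed locus equals the full moduli space, so the comparison must be carried out globally on the Joyce--Song pair moduli rather than on a proper substack.
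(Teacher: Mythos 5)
Your outline follows the paper's own proof of the $\deg K_S<0$ case (Theorem \ref{Kneg}) quite closely: kill the Higgs field, split the obstruction theory into $\C^*$-weights, and feed the resulting pair invariants into the Joyce--Song universal formula. However, the step you flag as the ``main obstacle'' is precisely where your argument is not yet a proof, and the missing idea is a one-line consequence of the same semistability/Serre-duality mechanism you already used in Step 1. The point is that not only does each individual $\phi$ vanish: one has $\Hom_S(E,E\otimes K_S)_0=0$ for \emph{every} semistable $E$ in the moduli space, and by Serre duality this space is $\Ext^2_S(E,E)_0^*$, so the \emph{weight-zero obstruction space vanishes identically}. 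Hence $\cP^\perp_{r,L,c_2}$ is smooth and equal to its own virtual cycle; since only weights $0$ and $1$ occur in $R\Hom_X(\iota_*E,\iota_*E)$, equivariant Serre duality gives $N^{\vir}=T^*_{\cP^\perp}\otimes\t\,[-1]$, so the localised integral is $\int_{\cP^\perp}e(T^*_{\cP^\perp}\otimes\t)=(-1)^{\dim\cP^\perp}e(\cP^\perp)$, which equals the Behrend-weighted count because $\chi^B\equiv(-1)^{\dim}$ on a smooth space. Without this smoothness statement there is no reason for a virtual localisation integral over a noncompact moduli space to equal a weighted Euler characteristic --- the divergence of the two theories is the central theme of the paper --- so ``a direct calculation in the spirit of \cite{TT1}'' cannot be left unspecified here. (Alternatively, since all semistable sheaves are supported on the zero section, $\cP^\perp_\alpha$ is \emph{compact} and the comparison follows from \cite{Be} with no computation; this is the paper's remark after Theorem \ref{Kneg}.)

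Two smaller points. In Step 1 you invoke semistability of $E$ and of $E\otimes K_S$ as sheaves, but a priori only Higgs semistability of $(E,\phi)$ is given; you should first show the maximal destabilising subsheaf of $E$ is $\phi$-invariant (its image under $\phi$ would be a quotient of slope $\ge\mu_{\max}(E)$ sitting inside a sheaf with $\mu_{\max}=\mu_{\max}(E)+\deg K_S<\mu_{\max}(E)$), whence $E$ is itself semistable and then $\phi=0$. In Step 3 you only address the formula \eqref{desk}; when $h^{0,1}(S)>0$ the conjecture asserts the truncated formula \eqref{desk2} instead, and you must match it against the fixed-determinant weighted-Euler-characteristic identity of Proposition \ref{pengest}, which likewise collapses to its first term via the $\Jac(S)$-action. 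Since $\deg K_S<0$ forces $h^{0,2}(S)=0$, the conjectured formulae and the Behrend-theoretic formulae coincide in both cases, yielding $\VW_\alpha=\vw_\alpha$ as in the paper.
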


In Section \ref{K3} we use a wonderful conjecture of Toda \cite{To1}, proved in \cite{MT}, to calculate the invariants $\VW_\alpha$ on K3 surfaces. The generating series are modular, expressed in terms of the Dedekind eta function $\eta$. 

\begin{thm} \label{7}
For $S$ a K3 surface, the generating series of rank $r$ trivial determinant Vafa-Witten invariants equals
\beq{525}
\sum_{c_2}\VW_{r,c_2}q^{c_2}\ =\ \sum_{d|r}\frac{d}{r^2}q^r
\sum_{j=0}^{d-1}\eta\Big(e^{\frac{2\pi ij}d}q^{\frac r{d^2}}\Big)^{-24}.
\eeq
\end{thm}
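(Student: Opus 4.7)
The plan is to chain together three results. Since $S$ is K3, we have $H^{0,1}(S) = 0 = H^{0,2}(S)$, so by the preceding theorem Conjecture \ref{pechconj} holds in its full Joyce--Song form \eqref{desk}. The first step is to Möbius-invert \eqref{desk} to express $\VW_\alpha$ as a universal polynomial in the pair invariants $P^\perp_{\alpha_i}(n)$ over decompositions $\alpha_i = \delta_i\alpha$; this is the standard Joyce--Song inversion of \cite{JS}, and the resulting expression for $\VW_\alpha$ is independent of the auxiliary parameter $n \gg 0$.

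Second, I would invoke Toda's conjecture \cite{To1}, proved for local K3 in \cite{MT}. For the local Calabi--Yau threefold $X = K_S = S \times \C$, this conjecture expresses the generating series of pair invariants in terms of Behrend-weighted Euler characteristics of moduli of semistable sheaves on $S$. On K3 the symmetric obstruction theory together with the trivial canonical bundle forces the Behrend weights to reduce to signs, so Toda's formula outputs the ordinary Euler characteristics $\chi(\cN^\perp_{r,\cO_S,c_2})$ of the moduli of rank $r$, trivial-determinant semistable sheaves on $S$.

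Third, I would compute these Euler characteristics. For coprime data one has Fourier--Mukai equivalences identifying the moduli with a Hilbert scheme of points on $S$ (Mukai, Yoshioka, O'Grady), so $\chi$ is given by Göttsche's formula $\sum_n \chi(S^{[n]}) q^n = q\,\eta(q)^{-24}$. The fixed-determinant condition requires averaging over the $r^2$ points of the $r$-torsion subgroup of $\Pic(S)$, producing the prefactor $1/r^2$ and the root-of-unity shifts $e^{2\pi i j/d}$. Strictly semistable strata correspond to Jordan--Hölder types of length $r/d$ for divisors $d|r$; these contribute terms with rescaled variable $q^{r/d^2}$, weighted by $d$. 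Assembling gives the Hecke-averaged expression on the right of \eqref{525}.

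The main obstacle is Step 2: carefully matching Toda's invariants, as formulated in \cite{To1,MT}, to the specific trace-free and fixed-determinant pair invariants $P^\perp_\alpha(n)$ defined in this paper. Once this identification is made, the Möbius inversion of Step 1 cancels precisely the Joyce--Song combinatorics that Toda's wall-crossing attaches to each Euler characteristic, leaving the clean modular formula. The remaining combinatorial bookkeeping that fixes the normalisations $d/r^2$, $q^r$ and $q^{r/d^2}$ is pinned down by two consistency checks: specialisation to rank $r = 1$ must reproduce Göttsche's formula, and for every $r$ the generating series must be a power series in integer (not fractional) powers of $q$, which forces the averaging over $d$-th roots of unity in exactly the form displayed in \eqref{525}.
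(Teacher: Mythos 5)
Your outline has the right ingredients (Toda's formula, G\"ottsche's formula, the wall-crossing structure of Conjecture \ref{pechconj}), but there are two genuine gaps. First, you misplace what Toda's formula \eqref{toda} actually computes: it is a statement about the Behrend-weighted invariants $JS_\alpha(X)$ --- i.e.\ about $\vw_\alpha$, defined via weighted Euler characteristics of the Hall-algebra logarithm --- and not about the virtual-localisation pair invariants $P^\perp_\alpha(n)$ of \eqref{bowl}. The bridge between the two theories is exactly the hard step: it is the identity \eqref{it} of \cite[Proposition 7.6]{MT}, $\sum P^\perp_\alpha(n)q^\alpha=-\log\big(1+\sum\widetilde P_\alpha(n)q^\alpha\big)$, proved by degenerating $S\times E$ to a nodal fibre, applying Jun Li's degeneration formula and virtual localisation on one side and a gluing/configuration-space argument on the other. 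This is what gives $\VW_\alpha=\vw_\alpha$ on K3 (Theorem \ref{mt}); it cannot be absorbed into ``carefully matching normalisations''. Once you have it, the paper's actual route is shorter than yours: compute $\vw_\alpha$ directly from \eqref{toda} (Theorem \ref{Kvw2}) and conclude via $\VW_\alpha=\vw_\alpha$; no M\"obius inversion of \eqref{desk} is needed.

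Second, your mechanism for the prefactor is wrong: $\Pic(S)$ of a K3 surface is torsion-free (since $h^{0,1}(S)=0$), so there is no ``$r$-torsion subgroup of $\Pic(S)$'' of order $r^2$ to average over. The factors $d/r^2$ and the arguments $e^{2\pi ij/d}q^{r/d^2}$ come instead from the multiple-cover denominators $1/k^2$ in \eqref{toda}, followed by the explicit resummation in the proof of Theorem \ref{Kvw2}: sum over $n=md$ divisible by $d$, shift $m$ by $r/d$, swap the roles of $d$ and $r/d$, and use the roots-of-unity filter to extract from $\eta(q)^{-24}$ the powers of $q$ divisible by $d$. Pinning down the normalisations by ``consistency checks'' (integrality of exponents, the $r=1$ specialisation) is not a derivation; the rank-one check in particular cannot distinguish the correct weights $d/r^2$ from other assignments that also collapse correctly at $r=1$.
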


In particular, when $r$ is prime, so that $d$ takes only the values $1$ and $r$, this recovers a prediction of \cite[End of Section 4.1]{VW},
$$
-\frac1{r^2}q^r\eta(q^r)^{-24}-\frac1rq^r\sum_{j=0}^{r-1}\eta\Big(e^{\frac{2\pi ij}r}q^{1/r}\Big)^{-24}.
$$
Vafa and Witten also asked for the extension to more general $r$, which is precisely what \eqref{525} gives.

\subsection{Behrend localisation} Instead of virtual localisation, one can also consider defining invariants using Behrend localisation. We begin with the case where semistability implies stability.

When a moduli space has a symmetric perfect obstruction theory (as our moduli spaces do) and is compact (in general ours are not) the invariant defined by virtual cycle has another description \cite{Be}. It is the Euler characteristic of the moduli space, weighted by its constructible Behrend function. In the presence of a $\C^*$-action this can be localised to the fixed points, since the other orbits have vanishing (weighted) Euler characteristic.

In our noncompact set up the weighted Euler characteristic defines a \emph{different} invariant which we denote $\vw_\alpha\in\Z$; see Section \ref{modif}. (This is an $SU(r)$ invariant; the corresponding $U(r)$ invariant is denoted $\uvw_\alpha$ and is defined in Section \ref{Ur}.)

One great advantage of $\vw_\alpha$ over the virtual localisation invariant $\VW_\alpha$ \eqref{virtdef} of \cite{TT1} is that the work of Joyce-Song \cite{JS} and Kontsevich-Soibelman \cite{KS} also crucially uses weighted Euler characteristics. Their work therefore applies to $\vw_\alpha$, and allows us --- in Section \ref{modif} --- to extend its definition to the case where there exist strictly semistable Higgs pairs. 

The resulting invariants $\vw_\alpha\in\Q$ need not be deformation invariant, and in \cite{TT1} we showed that in general they give the ``wrong" definition from the point of view of physics. But nonetheless they still give the right answer under certain circumstances.

\begin{thm} \label{mt} When $\deg K_S<0$, or $S$ is a K3 surface \cite{MT}, the two types of Vafa-Witten invariant coincide: $\vw_\alpha=\VW_\alpha$.
\end{thm}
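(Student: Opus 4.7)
The plan is to prove the $\deg K_S<0$ case here; the K3 statement is established by different methods in \cite{MT}.

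The key step is to show that when $\deg K_S<0$, every Gieseker semistable Higgs pair $(E,\phi)$ with $\tr\phi=0$ forces $\phi=0$. Indeed, the traces $\tr\phi^k$ lie in $H^0(K_S^k)=0$ for each $k\ge1$, so $\phi$ is nilpotent; and if $\phi\ne 0$ then $\ker\phi\subsetneq E$ is a proper nonzero $\phi$-invariant subsheaf, and the induced injection $E/\ker\phi\into E\otimes K_S$ yields $\mu(E/\ker\phi)\le\mu(E)+\deg K_S<\mu(E)$, giving $\mu(\ker\phi)>\mu(E)$ --- contradicting the $\mu$-semistability of the Higgs pair implied by Gieseker semistability. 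Consequently $\cN^\perp_\alpha$ reduces to the projective Gieseker moduli of semistable sheaves on $S$, and $\cP^\perp_\alpha$, a projective family of admissible sections over it, is itself projective with trivial $\C^*$-action.

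Given compactness, Behrend's theorem \cite{Be} applied to the symmetric obstruction theory of Section \ref{virtpairs} identifies
$$
P^\perp_\alpha(n)\ =\ \chi\bigl(\cP^\perp_\alpha,\,\nu_{\cP^\perp_\alpha}\bigr),
$$
and the right-hand side is, by construction, the Joyce-Song pair invariant defining $\vw_\alpha$ in Section \ref{modif}. Conjecture \ref{pechconj} --- proven in this case --- writes $P^\perp_\alpha(n)$ as precisely the same universal Joyce-Song combination of $\VW_{\alpha_i}$ that the Behrend pair invariant is of $\vw_{\alpha_i}$. Since the two left-hand sides now agree, inverting the formulae by induction on $\sum\delta_i\ge 1$ (equivalently on the number of Jordan-H\"older constituents of $\alpha$) gives $\VW_\alpha=\vw_\alpha$.

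The principal obstacle is the slope argument of the first step; once compactness of $\cP^\perp_\alpha$ is in hand, everything else follows formally from Behrend's theorem and the matching wall-crossing structures on the two sides. The K3 case is qualitatively different: the moduli space is truly non-compact because $\phi$ may be nonzero, and one must compare the two invariants by other means. This is carried out in \cite{MT} using the proof of Toda's conjecture.
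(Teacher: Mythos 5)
Your overall strategy for the $\deg K_S<0$ case is sound and coincides with the \emph{alternative} proof recorded in the remark following Theorem \ref{Kneg}: once one knows every semistable Higgs pair has $\phi=0$, all relevant sheaves are supported on the zero section $S\subset X$, so $\cP^\perp_\alpha$ is projective, and Behrend's theorem \cite{Be} applied to the symmetric obstruction theory forces the virtually-localised pair invariant $P^\perp_\alpha(n)$ to equal the weighted Euler characteristic $P_\alpha(n)$; since $h^{0,2}(S)=0$ (so in particular $h^0(K_S)=0$) the universal formulae determining $\VW_\alpha$ from $P^\perp_\alpha(n)$ and $\vw_\alpha$ from $P_\alpha(n)$ are literally the same, and $\VW_\alpha=\vw_\alpha$ follows by inverting them inductively. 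The paper's main proof of Theorem \ref{Kneg} is instead computational: it uses the vanishings $\Hom_S(E,E\otimes K_S)=0=\Ext^2_S(E,E)$ to show that $\cP^\perp_\alpha$ is \emph{smooth} with no fixed obstructions and that $N^{\vir}=T^*_{\cP^\perp}\otimes\t\,[-1]$, whence $P^\perp_\alpha(n)=(-1)^{\dim\cP^\perp}e(\cP^\perp)=P_\alpha(n)$ directly. Your route is shorter; the explicit computation has the advantage of also identifying the common value of the pair invariants. The deferral of the K3 case to \cite{MT} matches the paper.

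The one step that needs repair is your slope argument for $\phi=0$ (a fact the paper simply quotes from \cite[Proposition 7.6]{TT1}). From the injection $E/\ker\phi\into E\otimes K_S$ you cannot immediately conclude $\mu(E/\ker\phi)\le\mu(E)+\deg K_S$: that inequality would require $E\otimes K_S$ (equivalently $E$) to be slope semistable, whereas a priori you only have semistability of the \emph{Higgs pair}, i.e. the slope inequality for $\phi$-invariant subsheaves. The fix is to apply Higgs semistability to a different subsheaf: $F:=(\im\phi)\otimes K_S^{-1}\subseteq E$ satisfies $\phi(F)\subseteq\im\phi=F\otimes K_S$, so $F$ is $\phi$-invariant and semistability gives $\mu(\im\phi)-\deg K_S=\mu(F)\le\mu(E)$, which is exactly the inequality you wanted since $\im\phi\cong E/\ker\phi$. (One should also record that $\ker\phi\ne0$, which holds because $\phi$ is nilpotent and $E$ is torsion-free, so that the seesaw deduction $\mu(\ker\phi)>\mu(E)$ makes sense.) With that correction the argument is complete.
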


This is one of the results that goes into the proof of Theorem \ref{7}: on a K3 surface we can compute $\vw_\alpha$ instead of $\VW_\alpha$.

\subsection{Modularity} \label{modu}
Vafa and Witten use ``S-duality" to predict that, for fixed rank $r$ and determinant $L$ on any 4-manifold $S$, the \mbox{generating series}
$$
Z_r(S)\ :=\ q^{-s}\sum_{n\in\Z}\,\VW_{r,L,n}(S)\;q^n
$$
should be a weight $w/2=-e(S)/2$ modular form for the shift $s=e(S)/12$.
We confine ourself to $S$ being a projective surface. To calculate $\VW_\alpha$ and confirm such a conjecture, one has to take into account the contribution of two different components of the $\C^*$-fixed locus $\cN^{\;\C^*}_{r,L,c_2}$: \medskip

\begin{enumerate}
\item[\eqnum\label{cpt1}] $\cM_{r,L,c_2}$, the moduli space  of semistable sheaves of fixed determinant $L$ on $S$. These are considered as Higgs pairs by setting $\phi=0$, or, equivalently, as torsion sheaves on $X$ by pushing forward from $S$.
\item[\eqnum\label{cpt2}] We let $\cM_2$ denote the union of \emph{all other} components of $(\cN^\perp_{r,L,c_2})^{\;\C^*}$, i.e. those for which $\phi$ is nilpotent but nonzero. They can be described in terms of flags of sheaves on $S$; when those sheaves have rank one we get the \emph{nested Hilbert schemes of $S$} studied in \cite{GSY1, GSY2, GT} and \cite[Section 8]{TT1}.
\end{enumerate}
When there are strictly semistable Higgs pairs, $\cM_2$ \eqref{cpt2} may not be closed but might touch $\cM_{r,L,c_2}$ \eqref{cpt1}.

The literature has hitherto focussed on only the \emph{first} of these components, and under the restriction that there are no strictly semistables. This implies, by \cite[Section 7.1]{TT1} for instance, that the contribution of \eqref{cpt1} to $\VW_{r,L,c_2}$ is the (integer!) virtual signed Euler characteristic
\beq{signvir}
\int_{[\cM_{r,L,c_2}]^{\vir}}c_{\vd}\big(E\udot\big)\ \in\ \Z,
\eeq
of the instanton moduli space $\cM_{r,L,c_2}$. Here $E\udot\to\LL_{\cM_{r,L,c_2}}$ is the natural obstruction theory, or virtual cotangent bundle, of $\cM_{r,L,c_2}$, and we take its ``virtually top" Chern class $c_{\vd}$, where
$$
\vd\ =\ 2rc_2-(r-1)c_1^2-(r^2-1)\chi(\cO_S)
$$
is its virtual dimension.
This is the Ciocan-Fontanine-Kapranov/Fantechi-G\"ottsche
signed Euler characteristic of $\cM_L$ studied in \cite{JT}.

Similarly $\cM_{r,L,c_2}$'s contribution to $\vw_{r,L,c_2}$
is just its signed topological Euler characteristic
\beq{juste}
(-1)^{\vd(\cM_{r,L,c_2})}e(\cM_{r,L,c_2}),
\eeq
since the Behrend function is $(-1)^{\vd}$ by the dimension reduction result of \cite{BBS, Da} described in \cite[Section 5]{JT}.\footnote{Vafa and Witten would not have the sign $(-1)^{\vd}$ due to different orientation conventions. They identify the tangent and cotangent bundles of $\cM_{r,L,c_2}$ using a Riemannian metric. This is natural from the real point of view, but changes the natural complex orientation that we use by $(-1)^{\dim_\C}$.}

There is a large literature computing these topological Euler characteristics \eqref{juste} and confirming the modularity prediction in examples. Almost all of these example satisfy 
\begin{itemize}
\item $\cM_2=\emptyset$,
\item $\cM_{r,L,c_2}$ contains only \emph{stable} sheaves, and
\item $\cM_{r,L,c_2}$ is smooth.
\end{itemize}
The first and third of these usually follow from a variant of Vafa-Witten's ``\emph{vanishing theorem}" \cite[Section 2.4]{VW}
under some kind of positive curvature condition (such as $K_S\le0$).
The first condition ensures that $\pm e(\cM_{r,L,c_2})$ is the \emph{only} contribution to the Vafa-Witten invariants $\vw$, while the second implies that $\pm e(\cM_{r,L,c_2})$ also equals the virtual invariant \eqref{signvir} in these examples, so $\vw_{r,L,c_2}=\VW_{r,L,c_2}$ and both are integers.

There are four references we know of with computations of Vafa-Witten invariants when no vanishing theorem holds. Firstly \cite{DPS} makes predictions based on modularity. Noncompact surfaces given by line bundles over curves are studied in \cite[Section 3]{AOSV}, while \cite[Section 3]{GGP} studies Vafa-Witten invariants via TQFT for 4-manifolds made by gluing. Most recently \cite{GK} computes the contribution \eqref{signvir} of $\cM_{r,L,c_2}$ to $\VW_{r,L,c_2}$ on general type surfaces, finding modular forms and even refining them by replacing virtual signed Euler characteristics \eqref{signvir} by virtual $\chi_y$ genera. However none of these references calculate on the ``other" component $\cM_2$ \eqref{cpt2}.

Though these references also do not deal with strictly semistable sheaves, Manschot (see \cite{Ma2}, for instance) has long advocated that the correct way to count semistables is using a Joycian formalism (at least in the $K_S<0$ case, where we find $\vw=\VW$). \medskip

Even when all Higgs pairs are stable, the contribution of $\cM_2$ to $\VW_{r,L,c_2}$ can be \emph{rational}, while $\vw_{r,L,c_2}$ is still an integer. In particular, once a vanishing theorem does not hold, $\vw$ and $\VW$ can differ.
In \cite{TT1} we calculated the contribution of $\cM_2$ in examples with only stable Higgs pairs, and in this paper we also calculate with strictly semistables (at which point both invariants $\vw,\,\VW$ become rational numbers.) Such calculations suggest which invariant is ``correct" for physics, as we now explain.

In \cite{TT1} we made some computations of the contributions  of $\cM_2$ to $\VW_\alpha$ on surfaces with $K_S>0$ satisfying some mild conditions (for instance to ensure that semistability implies stability). Generic quintic surfaces, and K3 surfaces blown up in a point are examples. For rank $r=2$, determinant $L=K_S$ and arbitrary $c_2$ there is a natural series of Hilbert schemes $S^{[n]}$ amongst the components of $\cM_2$ \eqref{cpt2} and we managed to sum the generating series of their contributions to the $\VW_\alpha$ into a closed form.

The result was an \emph{algebraic function} of $q$, rather than a modular form. Conversely we found that generating series of Euler characteristic invariants like $\vw_\alpha$ give modular forms up to a factor of $(1-q)^{e(S)}$. Amazingly (to us)\footnote{This issue misled us for some time, as did the (weighted) Euler characteristic calculations of Section \ref{Kthree}, which show that $\vw_\alpha$ gives the right answer on K3 surfaces, even in the presence of semistables. The resolution of this paradox is Theorem \ref{mt}.} however, this is \textbf{not} an indication that $\vw_\alpha$ is preferable to $\VW_\alpha$. On adding the contributions of other components of $\cM_2$ the generating series of $\VW_\alpha$ invariants gives precisely the modular form predicted by Vafa and Witten in low degrees \cite[Section 8]{TT1}. The invariants $\VW_\alpha$ are rational numbers depending on $c_1(K_S)^2$ and $c_2(S)$, whereas the $\vw_\alpha$ integers depend only on the Euler characteristic $c_2(S)$ and give the ``wrong" answers.

\medskip\noindent\textbf{Threefold S-duality.}
Nonetheless we expect there to be a role for \emph{both} definitions $\VW_\alpha$ and $\vw_\alpha$ in the S-duality conjecture for \emph{threefolds} \cite{MSW, GaSY, DeM, dB}. Namely the DT theory of sheaves supported on surfaces in a compact Calabi-Yau 3-fold $X$ should also have modular generating series. These invariants might be expected to be localised to a sum of invariants local to surfaces in $X$. One could then take either form of localisation --- virtual or Behrend --- to recover either type of Vafa-Witten invariant for these surfaces. For each individual surface they might give different answers, but their \emph{sum} over all surfaces in $X$ should give the same modular form when $X$ is compact.

And there has been compelling work showing that the Behrend approach is compatible with threefold S-duality.
In particular Yukinobu Toda has found many modular generating series from weighted Euler characteristics (see for instance his blow-up formula \cite[Theorem 4.3]{To2} and calculations on local $\P2$ \cite{To3} and local K3 \cite{To1}). His work, and that of Manschot et al \cite{Ma1, ABMP}, also shows that wall-crossing transformations (on generating functions of DT invariants counting two dimensional sheaves on Calabi-Yau 3-folds such as $X=K_S$) preserve the modularity predicted by S-duality when one uses weighted Euler characteristics. What is more surprising is that we are predicting that such results should hold (with small modifications like \eqref{desk2}) for invariants defined by virtual localisation too.\medskip

Finally, Emanuel Diaconsecu, Greg Moore, Sergei Gukov and Ed Witten explained to us that Vafa-Witten theory admits a categorification or refinement, given by a topological twist of maximally supersymmetric 5d super Yang-Mills theory. At first sight this would appear to favour $\vw_\alpha$, as it has natural refinements and categorifications using motivic or perverse sheaves of vanishing cycles. But Davesh Maulik has pointed out that $\VW_\alpha$ also admits a natural refinement using $\C^*$-equivariant K-theoretic invariants. And over the locus $\cM_{r,L,c_2}$ of \eqref{cpt1} this recovers the virtual $\chi_y$ genus studied by G\"ottsche-Kool in \cite{GK}. We will explore this refinement in future work.

%

\medskip\noindent\textbf{Acknowledgements.} It is a pleasure to dedicate this paper to Simon Donaldson on the occasion of his 60th birthday. Sir Simon's work has had a profound influence on both of our careers, and his kindness, generosity and humility has had a similar effect on the life of the second author. Fittingly, it was Simon who suggested R.T. have a look at \cite{VW} in 1996; it took a mere 20 years for him to get anywhere with this suggestion. 

Martijn Kool's observations, documented in \cite{TT1}, played a huge role is our recognition that the ``right" invariants should be the $\VW_\alpha$, not $\vw_\alpha$. We also very grateful to Emanuel Diaconescu, Sergei Gukov, Dominic Joyce, Jan Manschot, Davesh Maulik, Greg Moore, Bal\'azs Szendr\H oi, Rahul Pandharipande and Yukinobu Toda for many useful discussions. Finally we thank Ties Laarakker for finding and correcting a mistake in Proposition \ref{stabletrue}.

Y.T. was partially supported by JSPS Grant-in-Aid for Scientific Research numbers JP15H02054 and JP16K05125, and a Simons Collaboration Grant on ``Special holonomy in Geometry, Analysis and Physics". He thanks Seoul National University, NCTS at National Taiwan University, Kyoto University and BICMR at Peking University for support and hospitality during visits in 2015--17 where part of this work was done. \medskip

\noindent\textbf{Notation}
This paper is written in a less formal style than its companion \cite{TT1}. Some parts are conjectural, some parts have proofs which are only sketched, and in parts we are describing a future research programme.
 
We pass backwards and forwards through the spectral construction without comment.
See \cite{TT1} for a detailed review; in particular the equivalence of abelian categories
$$
\mathrm{Higgs}_{K_S}(S)\ \cong\ \mathrm{Coh}_c(X)
$$
between $K_S$-Higgs pairs $(E,\phi)$ on $S$ and compactly supported coherent sheaves on $X$ \cite[Proposition 2.2]{TT1}. This equates Gieseker (semi)stability \eqref{stab1} of the pair $(E,\phi)$ with respect to $\cO_S(1)$ with Gieseker (semi)stability of the sheaf $\cE_\phi$ with respect to $\cO_X(1):=\pi^*\cO_S(1)$.

For rank $r$ and second Chern class $c_2$ we use the notation
$$
\cM_{r,L,c_2}\ \subset\ \cM_{r,c_1,c_2}
$$
for the moduli space of semistable sheaves of  determinant $L$ (respectively first Chern class $c_1=c_1(L)$) on $S$. We reserve $\curly M_S$ for the moduli \emph{stack} of all coherent sheaves on $S$. Similarly
$$
\cN^\perp_{r,L,c_2}\ \subset\ \cN_{r,L,c_2}\ \subset\ \cN_{r,c_1,c_2}
$$
denote moduli spaces of semistable Higgs pairs with Chern classes $r,c_1=c_1(L),c_2$. In the first and second spaces $\det E$ is fixed to be $L$, and in the first space we also impose $\tr\phi=0$. From the second and third of these spaces we will define invariants $\vw,\,\uvw$ by Kai localisation, while from the first we will define $\VW$ by virtual localisation. In the presence of semistables we will work with corresponding spaces of Joyce-Song pairs, denoted by
$$
\cP^\perp_{r,L,c_2}\ \subset\ \cP_{r,L,c_2}\ \subset\ \cP_{r,c_1,c_2},
$$
with corresponding invariants $P^\perp_{r,L,c_2}$ (defined by virtual localisation in Section \ref{virtpairs}) and $P_{r,L,c_2},\,\tP_{r,c_1,c_2}$
(defined by weighted Euler characteristics in Sections \ref{jsP} and \ref{jsP2}) respectively.

\section{Semistable sheaves and Joyce-Song theory}

\subsection{Gieseker (semi)stability}\label{Gies}
We say a Higgs pair $(E,\phi)$ on $(S,\cO_S(1))$ is Gieseker stable if and only if $E$ is pure and --- for every $\phi$-invariant proper subsheaf $F\subset E$ --- there is an inequality of \emph{reduced Hilbert polynomials}
\beq{stab1}
p\_F(n)\,:=\,\frac{\chi(F(n))}{\rk(F)}\ <\ \frac{\chi(E(n))}{\rk(E)}\,=:\,p\_E(n) \quad\mathrm{for\ }n\gg0.
\eeq
Replacing $<$ by $\le$ defines Gieseker semistability.
Gieseker (semi)stability of $(E,\phi)$ is equivalent to Gieseker (semi)stability of the spectral sheaf $\cE_\phi$ with respect to $\cO_X(1)=\pi^*\cO_S(1)$. This is defined by the inequality of reduced Hilbert polynomials
$$
p\_{\cF}(n)\,:=\,\frac{\chi(\cF(n))}{r(\cF)}\ <\ \frac{\chi(\cE_{\phi}(n))}{r(E)}\,=:\,p\_{\cE}(n) \quad\mathrm{for\ }n\gg0,
$$
for all proper subsheaves $\cF\subset\cE$. Here $r(\cE):=\rk(\pi_{*\;}\cE)$ is the leading coefficient of the Hilbert polynomial $\chi\_X(\cE(n))$ divided by$\int_Sc_1(\cO_S(1))^2$.

We fix the Chern classes
$$
\rk(E)=r,\ \ c_1(E)=c_1,\ \ c_2(E)=k
$$
of our Higgs pairs $(E,\phi)$ on $S$. Equivalently, via the spectral construction, we consider compactly supported torsion sheaves on $X$ with rank 0 and
\begin{eqnarray}
c_1 &=& r[S], \nonumber \\
c_2 &=& -\iota_*\Big(c_1+\frac{r(r+1)}2c_1(S)\Big), \label{chern} \\
c_3 &=& \iota_*\Big(c_1^2-2k+(r+1)c_1\cdot c_1(S)+\frac{r(r+1)(r+2)}6c_1(S)^2\Big) \hspace{-1cm} \nonumber
\end{eqnarray}
in $H^*_c(X,\Z)$. Here $\iota\colon S\into X$ is the zero section and $[S]$ its Poincar\'e dual.

We combine these classes into the charge
\beq{alph}
\alpha\ =\ (r,c_1,k)\ \in\ H^{\ev}(S).
\eeq
If we fix $c_1=0$ we often denote this by $\alpha=(r,k)\in H^0(S)\oplus H^4(S)$. The Euler pairing on $X$ of two charges $\alpha,\beta$ is defined to be
$$
\chi(\alpha,\beta)\ :=\ \chi\_X(\cE,\cF)\ =\ \sum(-1)^i\,\mathrm{ext}^i_X(\cE,\cF),
$$
where $\cE,\cF$ are any two torsion sheaves on $X$ whose pushdown to $S$ have charges $\alpha,\beta$ respectively. (Note that we confusingly work on $X$ while expressing charges in terms of data on $S$.) This pairing is skew-symmetric; in particular for any charge $\alpha$,
$$
\chi(\alpha,\alpha)\ \equiv\ 0.
$$
Similarly we have the Hilbert polynomial and reduced Hilbert polynomial of the class $\alpha$,
$$
\chi(\alpha(n))\,:=\,\chi\_X(\cE(n)) \quad\mathrm{and}\quad
p_\alpha(n)\,:=\,\frac{\chi(\alpha(n))}r\,.
$$
We also assume that the polarisation
$\cO_S(1)$ is \emph{generic} so that
\beq{Lgen}
p_\beta(n)\ =\ \mathrm{const}\cdot p_\alpha(n)\ \so\ \beta\ =\ \mathrm{const}\cdot\alpha.
\eeq 
(This assumption restricts the possible sheaves that destabilise $\cE$, and so simplifies the formula \eqref{sum} below. It is purely for simplicity; we can ignore it at the expense of using more complicated formulae from \cite{JS}.)

There is a quasi-projective moduli space parameterising S-equivalence classes of Gieseker semistable sheaves on $X$ with fixed charge $\alpha$. Its $\C^*$-fixed locus is projective, so we would like to define invariants by localising to it. This is no problem when stability and semistability coincide, but in general points of the moduli space represent an entire S-equivalence class of semistable sheaves (rather than a single sheaf), so it is not immediately clear how to count them correctly. 

\subsection{Hall algebra} \label{hall}
Joyce-Song and Kontsevich-Soibelman therefore replace the moduli space by the moduli \emph{stack} of semistable sheaves, and use its Behrend function to define generalised DT invariants which are \emph{rational} numbers in general. 

We describe some of this theory using the formalism of Joyce-Song's noncompact book \cite{JS}. This requires two assumptions that do not always hold when $X=K_S$:
\begin{itemize}
\item $X$ should be ``\emph{compactly embeddable"} \cite[Section 6.7]{JS}, and
\item $H^1(\cO_X)=0$.
\end{itemize}
Both conditions are only used to ensure that moduli of sheaves on $X$ are locally analytically critical loci. The first allows them --- when working with moduli of sheaves --- to pretend that $X$ is compact, while the second makes the line bundle $\cO_X(n)$ spherical. Applying the spherical twist about it (for $n\ll0$) therefore turns moduli of sheaves into moduli of bundles, which can be studied by analytic gauge theoretic methods to prove they are locally analytical critical loci (of the holomorphic Chern-Simons function). This is used to prove identities about Behrend functions.

Team Joyce has since proved that moduli stacks of sheaves on Calabi-Yau 3-folds are always locally algebraic critical loci \cite{BBBJ}, so we can ignore the above conditions. \medskip

Joyce \cite{Jo2} defines a Ringel-Hall algebra. He starts with the $\Q$-vector space on generators given by (isomorphism classes of) morphisms of stacks from algebraic stacks of finite type over $\C$ with affine stabilisers to the stack of objects of Coh$_c(X)$. He then quotients out by the scissor relations for closed substacks. We are interested in the elements
$$
1_{\cN^{ss}_\alpha}\colon\,\cN^{ss}_\alpha\ \Into\ \mathrm{Higgs}_{K_S}(S)\,\cong\,\mathrm{Coh}_c(X),
$$
where $\cN^{ss}_\alpha$ is the \emph{stack} of Gieseker semistable Higgs pairs $(E,\phi)$ of class $\alpha$ on $X$, and $1_{\cN^{ss}_\alpha}$ is its inclusion into the stack of all Higgs pairs on $S$.

To handle the stabilisers of strictly semistable sheaves, Joyce replaces these indicator stack functions by their ``logarithm",
\beq{epsi}
\epsilon(\alpha)\ :=\ \mathop{\sum_{\ell\ge 1,\,(\alpha_i)_{i=1}^\ell:\,\alpha_i\ne0\ \forall i,}}_{p_{\alpha_i}=\,p_{\alpha},\ \sum_{i=1}^\ell\alpha_i=\alpha}\frac{(-1)^\ell}\ell\ 1_{\cN^{ss}_{\alpha_1}}*\cdots*1_{\cN^{ss}_{\alpha_\ell}}\,.
\eeq
In this finite sum $*$ denotes the Hall algebra product. At the level of individual objects, the product of (the indicator functions of) $(E,\phi)$ and $(F,\psi)$ is the stack of all extensions between them,
$$
\frac{\Ext^1(\cF_\psi,\cE_\phi)}{\Aut(\cE_\phi)\times\Aut(\cF_\psi)\times\Hom(\cF_\psi,\cE_\phi)}\,,
$$
with $e\in\Ext^1(\cF_\psi,\cE_\phi)$ mapping to the corresponding extension of $\cF_\psi$ by $\cE_\phi$.
More generally $*$ is defined via the stack $\mathfrak{Ext}$ of all short exact sequences
\beq{exten}
0\To\cE_1\To\cE\To\cE_2\To0
\eeq
in Coh$_c(X)$, with its morphisms $\pi_1,\pi,\pi_2\colon\mathfrak{Ext}\to$ Coh$_c(X)$ taking the extension to $\cE_1,\,\cE,\,\cE_2$ respectively. This defines the universal case, which is the Hall algebra product of Coh$_c(X)$ with itself:
$$
1_{\mathrm{Coh}_c(X)}*1_{\mathrm{Coh}_c(X)}\ =\ \Big(\mathfrak{Ext}\rt\pi\mathrm{Coh}_c(X)\Big).
$$
Other products are defined by fibre product with this: given two stack functions $U,V\to$ Coh$_c(X)$ we define $U*V\to$ Coh$_c(X)$ by the Cartesian square
\beq{Cart}
\xymatrix@C=15pt{
U*V \ar[r]\ar[d]& \mathfrak{Ext} \ar[r]^(.42)\pi\ar[d]_{\pi_1\!}^{\!\times\pi_2}& \mathrm{Coh}_c(X) \\
U\times V \ar[r]& \mathrm{Coh}_c(X)\times\mathrm{Coh}_c(X)\,.\!\!\!}
\eeq
A deep result of Joyce \cite[Theorem 8.7]{Jo3} is that the logarithm \eqref{epsi} lies in the set of \emph{virtually indecomposable stack functions with algebra stabilisers}, $$
\epsilon(\alpha)\ \in\ \bar{\mathrm{SF}}_{\mathrm{al}}^{\mathrm{ind}}(\mathrm{Coh}_c(X),e,\Q).
$$
By \cite[Proposition 3.4]{JS} it can thus be written as a $\Q$-linear combination of morphisms from stacks of the form (scheme)$\,\times B\C^*$, where $B\C^*$ is the quotient stack $(\Spec\C)/\C^*$.
This allows Joyce-Song \cite[Section 5.3]{JS} to take the Kai-weighted Euler characteristic of the stack $\epsilon(\alpha)$ after removing the $B\C^*$ factor (they prove this ``integration map" factors through $\bar{\mathrm{SF}}_{\mathrm{al}}^{\mathrm{ind}}(\mathrm{Coh}_c(X),e,\Q)$). The weighting is by the pullback of the Behrend function $\chi^B$ on Coh$_c(X)$. That is, writing
\beq{epsZ}
\epsilon(\alpha)\ =\ \sum_ic_i\big(f_i\colon Z_i\times B\C^*\To
\mathrm{Coh}_c(X)\big),
\eeq
where the $Z_i$ are \emph{schemes},
\cite[Equation 3.22]{JS} defines generalised DT invariants by 
\beq{JS}
JS_\alpha(X)\ =\ \sum_ic_i\,e\big(Z_i,f_i^*\chi^B\big)\ \in\ \Q.
\eeq
We can localise this invariant. 
The action of $\C^*$ on $X/S$ induces an action on the stack of torsion sheaves by pullback. Similarly pulling back the universal extension over $\mathfrak{Ext}\times X$ we find that if $U,V$ are stacks with $\C^*$ actions and \emph{equivariant} morphisms to Coh$_c(X)$, then the diagram \eqref{Cart} and their Hall algebra product $U*V$ inherit natural $\C^*$ actions. Applied inductively to the $1_{\cN^{ss}_\alpha}$ and their Hall algebra products, we find that $\epsilon(\alpha)$ \eqref{epsi} carries a $\C^*$ action covering that on Coh$_c(X)$.

We claim moreover that in its decomposition \eqref{epsZ}, the pieces $Z_i$ can be taken to be $\C^*$-equivariant. This follows from the proof of the decomposition in \cite{Jo1}, where the key is to use Kresch's stratification of finite type algebraic stacks with affine geometric stabilisers into global quotient stacks \cite[Proposition 3.5.9]{Kr}. This can be done $\C^*$-invariantly, as can the other constructions in \cite[Proposition 5.21]{Jo1}.
 
Since the Behrend function is $\C^*$-invariant, non-fixed $\C^*$ orbits on the $Z_i$ have vanishing weighted Euler characteristic. As a result \eqref{JS} localises to the fixed locus,
\beq{csta}
JS_\alpha(X)\ =\ JS^{\C^*}_\alpha(X)\ :=\ \sum_ic_i\,e\!\left(\!Z_i^{\C^*}\!,f_i^*\chi^B\big|_{Z_i^{\C^*}}\!
\right)\,\in\ \Q.
\eeq
We use these localised invariants of $X$ to define certain $U(r)$ Vafa-Witten invariants of $S$.

\section{$U(r)$ $\uvw$ invariant}
\label{Ur}
\begin{defn} \label{thirdd}
We define a $U(r)$ Vafa-Witten invariant of $S$ by
$$\uvw_{r,c_1,c_2}(S)\ :=\ JS^{\C^*}_{(r,c_1,c_2)}(X)\ \in\ \Q.$$
\end{defn}

When all semistable sheaves are in fact stable this definition reduces to the weighted Euler characteristic of the moduli space or its $\C^*$-fixed locus,
\beq{uvwstab}
\uvw_{r,c_1,k}(S)\ =\ e\Big(\cN_{r,c_1,k},\,\chi^B_{\cN_{r,c_1,k}}\Big)\ =\ e\Big(\cN_{r,c_1,k}^{\C^*},\,\chi^B_{\cN_{r,c_1,k}}\big|_{\cN_{r,c_1,k}^{\C^*}}\Big).
\eeq
But these definitions are only useful when $h^1(\cO_S)=0$ because otherwise the action of $\Jac(S)$ on Coh$_c(X)$ by tensoring forces them to vanish. We define a more useful $SU(r)$ Vafa-Witten invariant $\vw_{r,c_1,c_2}(S)$ in Section \ref{modif}.

While calculating with \eqref{csta} directly is difficult, Joyce and Song prove their invariants may be written more simply in terms of certain \emph{Joyce-Song stable pairs}. We review these next.

\subsection{Joyce-Song pairs}\label{jsP}
Fixing a charge $\alpha$ and $n\gg0$, a Joyce-Song pair $(\cE,s)$ consists of
\begin{itemize}
\item a compactly supported coherent sheaf $\cE$ of charge $\alpha$ on $X$, and
\item a \emph{nonzero} section $s\in H^0(\cE(n))$.
\end{itemize}
We say that the Joyce-Song pair $(\cE,s)$ is \emph{stable} if and only if
\begin{itemize}
\item $\cE$ is Gieseker semistable with respect to $\cO_X(1)$, and
\item if $\cF\subset\cE$ is a proper subsheaf which destabilises $\cE$, then $s$ does \emph{not} factor through $\cF(n)\subset\cE(n)$.
\end{itemize}
For fixed $\alpha$ we may choose $n\gg0$ such that $H^{\ge1}(\cE(n))=0$ for all Joyce-Song stable pairs $(\cE,s)$.
There is no notion of semistability; when $X$ is compact the moduli space $\cP=\cP_{r,c_1,k}(X)$ of stable Joyce-Song pairs is already a projective scheme. It can be shown to be a moduli space of complexes $I\udot:=\{\cO_X(-n)\to\cE\}$ on a Calabi-Yau 3-fold, so $\cP$ carries a symmetric perfect obstruction theory governed by $R\Hom(I\udot,I\udot)\_0$. When $X=K_S$ it may be noncompact, but we can still define \emph{integer} invariants by
$$
\tP_{r,c_1,k}(n)\ :=\ e\Big(\cP_{r,c_1,k}\,,\chi^B_{\cP_{r,c_1,k}}\Big)
$$
and localise them to the $\C^*$-fixed locus:
\beq{welo}
\tP_{r,c_1,k}(n)\ =\ \tP^{\C^*}_{r,c_1,k}(n)\ :=\ e\Big(\cP_{r,c_1,k}^{\C^*},\chi^B_{\cP_{r,c_1,k}}\big|_{\cP_{r,c_1,k}^{\C^*}}\Big).
\eeq

Then for generic polarisation \eqref{Lgen} Joyce-Song's invariants $JS_\alpha(X)=JS_{(r,c_1,k)}(X)\in\Q$ satisfy the following identities \cite[Theorem 5.27]{JS},
\beq{sum}
\tP_{r,c_1,k}(n)\ =\ \mathop{\sum_{\ell\ge 1,\,(\alpha_i=\delta_i\alpha)_{i=1}^\ell:}}_{\delta_i>0,\ \sum_{i=1}^\ell\delta_i=1}
\frac{(-1)^\ell}{\ell!}\prod_{i=1}^\ell(-1)^{\chi(\alpha_i(n))} \chi(\alpha_i(n))\;JS_{\alpha_i}(X).
\eeq
These equations have been simplified by \eqref{Lgen}. If we work with non-generic $\cO_S(1)$, they should be replaced by the full equations of \cite[Theorem 5.27]{JS}. They uniquely determine the $JS_{\alpha}(X)=\uvw_\alpha(S)$, and can be used to define them.

When semistability\,=\,stability for the sheaves $\cE$, the moduli space $\cP_{r,c_1,k}$ is a $\PP^{\;\chi(\alpha(n))-1}$-bundle over the moduli space $\cN_{r,c_1,k}$ of torsion sheaves $\cE$. The Behrend function of $\cP_{r,c_1,k}$ is the pull back of $\cN_{r,c_1,k}$'s, multiplied by the sign $(-1)^{\chi(\alpha(n))-1}$. Therefore taking Euler characteristics and using \eqref{uvwstab} gives
$$
\tP_{r,c_1,k}(n)\ =\ (-1)^{\chi(\alpha(n))-1\,}\chi(\alpha(n))\,\uvw_{r,c_1,k}(S).
$$
This is the first term $\ell=1$ of \eqref{sum}.

More generally the $\ell>1$ terms in \eqref{sum} give rational corrections from semistable sheaves $\cE$. For instance, when $r=2$ and $c_1=0$ the two cases (depending on the parity of $k$) are
$$
\tP_{2,2k+1}(n)\ =\ (-1)^{\chi(\alpha(n))-1\,}\chi(\alpha(n))\,\uvw_{2,2k+1}(S)
$$
when $\alpha=(2,2k+1)$, and
$$
\tP_{2,2k}(n)\ =\ (-1)^{\chi(\alpha(n))-1}\,\chi(\alpha(n))\,\uvw_{2,2k}(S) +\frac12\chi\!\left(\frac\alpha2(n)\right)^2\uvw_{1,k}(S)^2
$$
when $\alpha=(2,2k)$.

\section{$SU(r)$ $\vw$ invariant}\label{modif}
Definition \ref{thirdd} gives $\uvw\equiv0$ when $h^{0,1}(S)>0$ because of the action of $\Jac(S)$ on Coh$_c(X)$ by tensoring. 
So we modify Joyce-Song's theory by fixing the determinant of our sheaves $E=\pi_*\;\cE$. (Even when $h^{0,1}(S)=0$ the resulting $SU(r)$ Vafa-Witten invariant $\vw$ is slightly different from the $U(r)$ invariant $\uvw$ because we remove the deformations $H^0(K_S)$ of the trace of the Higgs field.)

We fix a line bundle $L\in\Pic(S)$ and use the map
$$\xymatrix@=40pt{
\mathrm{Coh}_c(X) \ar[r]^-{\det\circ\,\pi_*}& \Pic(S).}
$$
We denote the fibre over $L$ by
$$
\mathrm{Coh}_c(X)^L\ :=\ (\det\circ\,\pi_*)^{-1}(L).
$$
Then, given any stack function $F:=\big(f\colon U\to$\,Coh$_c(X)\big)$ we can define its fibre over $L\in\Pic(S)$ to be
$$
F^L\ :=\ \big(f\colon U\times\_{\mathrm{Coh}_c(X)}\mathrm{Coh}_c(X)^L\To\mathrm{Coh}_c(X)\big).
$$
This is $1_{\mathrm{Coh}_c(X)^L}\cdot F$, where $\cdot$ is the ordinary (not Hall!) product described in \cite[Definition 2.7]{JS}.

Applied to Joyce's logarithm \eqref{epsZ} we get its fixed determinant analogue
$$
\epsilon(\alpha)^L\,:=\ \sum_ic_i\Big(f_i\colon\big(Z_i\times\_{\mathrm{Coh}_c(X)}\mathrm{Coh}_c(X)^L\big)\big/\C^*\to \mathrm{Coh}_c(X)\Big).
$$
Applying Joyce-Song's integration map to this gives a generalised fixed-determinant DT invariant 
$$
JS^L_\alpha(X)\ :=\ \sum_ic_i\,e\big(Z_i\times\_{\mathrm{Coh}_c(X)}\mathrm{Coh}_c(X)^L,f_i^*\chi^B\big).
$$
As usual, in practice we compute this by localising to $\C^*$-fixed points (and using Joyce-Song pairs in the next Section). As in \eqref{csta} the $\C^*$ action on $X$ covering the identity on $S$ induces $\C^*$ actions on Coh$_c(X)^L$ and $\epsilon(\alpha)^L$. As before we can therefore also take the $Z_i$ to carry equivariant $\C^*$ actions, so that
$$
JS^L_\alpha(X)\ =\ JS^{L,\C^*}_\alpha(X)\,:=\,\sum_ic_i\,e\Big(Z_i^{\C^*}\!\times\_{\mathrm{Coh}_c(X)}\mathrm{Coh}_c(X)^L,f_i^*\chi^B\Big).
$$

\begin{defn} \label{last}
The $SU(r)$ Vafa-Witten invariant is
$$\vw_{r,L,k}(S)\ :=\ (-1)^{h^0(K_S)}JS_{(r,k)}^L(X)\ \in\ \Q.$$
\end{defn}
Here we have inserted the sign to account for the fact that we did not restrict our sheaves $\cE$ to have centre of mass 0 on each fibre of $X\to S$ (equivalently, we did not insist that $\tr\phi=0$) in the construction of $JS^L$. In the $SU(r)$ moduli space this condition should be enforced, and its product with $H^0(K_S)$ (which translates torsion sheaves up the $K_S$ fibres) gives the moduli space we have used. This only affects the Behrend function, and so the weighted Euler characteristic, by the sign $(-1)^{\dim H^0(K_S)}$.

When $h^{0,1}(S)=0$ and $\cO_S(1)$ is generic \eqref{Lgen} this means we modify the pairs theory only by a sign, and \eqref{sum} becomes
\beq{munch}
\tP_{r,c_1,k}(n)\ =\ \mathop{\sum_{\ell\ge 1,\,(\alpha_i=\delta_i\alpha)_{i=1}^\ell:}}_{\delta_i>0,\ \sum_{i=1}^\ell\delta_i=1}
\frac{(-1)^\ell}{\ell!}\prod_{i=1}^\ell(-1)^{\chi(\alpha_i(n))+h^0(K_S)} \chi(\alpha_i(n))\;\vw_{\alpha_i}(S).
\eeq
For $h^{0,1}(S)>0$ we have to modify the pairs theory more significantly.

\subsection{Joyce-Song pairs} \label{jsP2}
We sketch how the Joyce-Song pairs theory gets modified in this fixed-determinant setting. We use the notation of \cite[Chapter 13]{JS}, most of which goes through with only minor modification.
We fix $n\gg0$ and use the same auxiliary categories $\cB_{p_\alpha}$ (whose objects are semistable sheaves $\cE$ with reduced Hilbert polynomial a multiple of $p_\alpha$, plus a vector space $V$ and a linear map $V\to H^0(\cE(n))$), and the same Euler forms $\bar\chi$ thereon. Everything is unchanged up until subsection 13.5, where Joyce-Song apply their integration map (weighted Euler characteristic) to their stack functions $\bar\epsilon_{(\alpha,1)}$ of Equations (13.25) or (13.26). We instead apply their integration map to their fixed determinant analogues.

That is, there is a forgetful map from the stack of objects of $\cB_{p_\alpha}$ to the stack of objects of Coh$_c(X)$, remembering only the sheaf $\cE$. Thus, in their notation, we fix $L\in\Pic(S)$ and define
$$
\bar\epsilon^L_{(\alpha,1)}\ :=\ \bar\epsilon_{(\alpha,1)}
\times\_{\mathrm{Coh}_c(X)}\mathrm{Coh}_c(X)^L.
$$
This is a virtual indecomposible because $\bar\epsilon_{(\alpha,1)}$ is. Applying their integration map $\tilde\Psi^{\cB_{p_\alpha}}$ to it gives the fixed-determinant analogue of their count of Joyce-Song pairs
\beq{plainP}
P_{r,L,k}(n)\,:=\ e\Big(\cP_{r,L,k},\chi^B_{\cP_{r,L,k}}\Big).
\eeq
Here $\cP_{r,L,k}$ is the moduli space of Joyce-Song pairs $(\cE,s)$ with $\det\pi_*\;\cE\cong L$ and charge $\alpha=(r,c_1(L),k)\in H^{\ev}(S)$. As usual the invariant  is calculated in practice by localisation:
$$
P_{r,L,k}(n)\ =\ P^{\C^*}_{r,L,k}(n)\ :=\ 
e\Big((\cP_{r,L,k})^{\C^*}\!,\,\chi^B_{\cP_{r,L,k}}\big|_{(\cP_{r,L,k})^{\C^*}}\Big).
$$
Then applying $\tilde\Psi^{\cB_{p_\alpha}}\big((\ \cdot\ )^L\big)$ to Joyce-Song's equation (13.26) for $\bar\epsilon_{(\alpha,1)}$ gives the fixed-determinant analogue of the formula 
\eqref{sum}. We claim it is the following (much simpler!) formula when $\cO_S(1)$ is generic \eqref{Lgen}.

\begin{prop} \label{pengest} When $h^{0,1}(S)>0$, the invariant $P^{\C^*}_{r,L,k}(n)$ determines the $SU(r)$ Vafa-Witten invariant of Definition \ref{last} by
\beq{be}
P_{r,L,k}(n)\ =\ (-1)^{h^0(K_S)}(-1)^{\chi(\alpha(n))-1\,}\chi(\alpha(n))\,\vw_{r,L,k}(S).
\eeq
\end{prop}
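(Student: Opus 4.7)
The plan is to apply Joyce-Song's integration map $\tilde\Psi^{\cB_{p_\alpha}}$ to the fixed-determinant stack function $\bar\epsilon^L_{(\alpha,1)}$ and show that, when $h^{0,1}(S) > 0$, only the leading $\ell = 1$ term in their decomposition \cite[Equation (13.26)]{JS} survives; the formula \eqref{be} then follows from a standard projective-bundle computation together with the sign convention of Definition \ref{last}.

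First I would unpack the decomposition: it expresses $\bar\epsilon_{(\alpha,1)}$ as a finite signed sum, indexed by ordered tuples $\alpha = \alpha_1 + \cdots + \alpha_\ell$ with $p_{\alpha_i} = p_\alpha$ (equivalently $\alpha_i = \delta_i\alpha$ by \eqref{Lgen}), of Hall-algebra products of the sheaf-stack functions $\bar\epsilon_{\alpha_i}$ with a single pair factor. Intersecting each such product with $\mathrm{Coh}_c(X)^L$ via a Cartesian square analogous to \eqref{Cart} produces $\bar\epsilon^L_{(\alpha,1)}$ term by term, and applying $\tilde\Psi^{\cB_{p_\alpha}}$ gives a corresponding decomposition of $P_{r,L,k}(n)$.

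The core of the argument is showing that every $\ell \geq 2$ term contributes zero. The Jacobian $\Jac(S)$ acts on $\mathrm{Coh}_c(X)$ by $A\cdot\cE := \pi^*A \otimes \cE$, and the projection formula gives $\det\pi_*(A\cdot\cE) = A^{r(\cE)} \otimes \det\pi_*\cE$. On an $\ell$-fold iterated extension with component ranks $r_i = \delta_i r$, simultaneous tensoring by a tuple $(A_1,\ldots,A_\ell) \in \Jac(S)^\ell$ preserves the total determinant $L$ exactly when $\sum r_i A_i = 0$ in $\Jac(S)$. Since multiplication by each positive integer $r_i$ is a surjective isogeny on $\Jac(S)$, the homomorphism $\Jac(S)^\ell \to \Jac(S)$, $(A_i)\mapsto\sum r_iA_i$, is surjective, so its kernel $G_\ell$ is a connected abelian variety of dimension $(\ell-1)h^{0,1}(S) > 0$. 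Tensoring by a line bundle is an autoequivalence of $\mathrm{Coh}_c(X)$, so $G_\ell$ preserves the Behrend function and commutes with the formation of $\mathfrak{Ext}$; hence it acts on each Hall-algebra product and on its fixed-determinant restriction. Stabilisers lie in the $r$-torsion of $\Jac(S)$ (if $\pi^*A \otimes \cE \simeq \cE$ then $A^r = 0$ by taking determinants of $\pi_*\cE$) and are thus finite, so $G_\ell$-orbits are positive-dimensional. Exactly as in the $\C^*$-equivariance argument preceding \eqref{csta}, Kresch's stratification \cite[Proposition 3.5.9]{Kr} can be run $G_\ell$-equivariantly to produce the decomposition \eqref{epsZ} with $G_\ell$-equivariant $Z_i$, and the weighted Euler characteristic of each such $Z_i$ vanishes.

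For the surviving $\ell = 1$ term, $\bar\epsilon^L_{(\alpha,1)}$ reduces (modulo the $B\C^*$ factor) to the moduli stack of Joyce-Song pairs $(\cE,s)$ with $\cE$ semistable and $\det\pi_*\cE = L$. Since $H^{\geq 1}(\cE(n)) = 0$ by the choice of $n$, the forgetful map $\cP_{r,L,k} \to \cN_{r,L,k}$ is a $\PP^{\chi(\alpha(n))-1}$-bundle; the Behrend function of a projective bundle is the pullback twisted by the sign $(-1)^{\chi(\alpha(n))-1}$ of its relative dimension, and $e\bigl(\PP^{\chi(\alpha(n))-1}\bigr) = \chi(\alpha(n))$, so this term contributes $(-1)^{\chi(\alpha(n))-1}\chi(\alpha(n))\,JS^L_\alpha(X)$. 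Inserting the sign $(-1)^{h^0(K_S)}$ of Definition \ref{last} gives \eqref{be}. The main obstacle will be verifying the combined $G_\ell \times \C^*$-equivariant refinement of the decomposition \eqref{epsZ}, and checking that Joyce-Song's integration map is compatible with the fixed-determinant fibre product in the presence of the Jacobian action; these equivariance issues are the only nontrivial modifications to their framework.
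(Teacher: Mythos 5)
Your $\ell=1$ term and the overall strategy (only the leading term survives when $h^{0,1}(S)>0$, because of a positive\-/dimensional Jacobian symmetry) agree with the paper, but the mechanism you propose for killing the $\ell\ge2$ terms has a genuine gap: the group $G_\ell=\ker\big(\Jac(S)^\ell\to\Jac(S)\big)$ does not act on the Hall-algebra product stacks. A tuple $(A_1,\dots,A_\ell)$ with the $A_i$ not all equal acts on the \emph{product} $\cN^{ss}_{\alpha_1}\times\cdots\times\cN^{ss}_{\alpha_\ell}$ of stacks of graded pieces, but not on the stack $\mathfrak{Ext}$ of extensions that defines $*$: given $0\to\cE_1\to\cE\to\cE_2\to0$ there is no canonical middle term for an extension of $\cE_2\otimes\pi^*A_2$ by $\cE_1\otimes\pi^*A_1$, because the extension class would have to be transported from $\Ext^1(\cE_2,\cE_1)$ to the genuinely different group $\Ext^1\big(\cE_2,\cE_1\otimes\pi^*(A_1\otimes A_2^{-1})\big)$. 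Only the \emph{diagonal} $\Jac(S)$ is an autoequivalence of $\mathrm{Coh}_c(X)$ and hence acts on $\mathfrak{Ext}$ and on $*$-products; and the diagonal elements of your $G_\ell$ are exactly the $r$-torsion points of $\Jac(S)$, a finite group. So the positive-dimensional, finite-stabiliser action on the $\ell\ge2$ terms that your vanishing argument requires does not exist, and the equivariant refinement of Kresch's stratification cannot be run for $G_\ell$.

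The paper circumvents this by changing the order of operations: it never acts on the bracket term as a whole. Instead it pushes the term down to $M\in\Pic_{\delta_1c_1}(S)$ via the determinant of the first factor and applies the multiplicativity of the integration map on Lie brackets, $\tilde\Psi^{\cB_{p_\alpha}}\big(\big[F^M,(\bar\epsilon_{(\alpha_\ell,0)})^{M^{-1}\otimes L}\big]\big)=\bar\chi(\cdot,\cdot)\,\tilde\Psi^{\cB_{p_\alpha}}(F^M)\,\tilde\Psi^{\cB_{p_\alpha}}\big((\bar\epsilon_{(\alpha_\ell,0)})^{M^{-1}\otimes L}\big)$, which replaces the extension stack by a product of integrals of the \emph{individual} factors. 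Each factor's integral is then shown to be independent of its determinant using only the diagonal $\Jac(S)$-action on that single factor (which does exist and preserves $\chi^B$), so the pushdown is a constant constructible function on the abelian variety $\Pic_{\delta_1c_1}(S)$, whose Euler characteristic vanishes since $h^{0,1}(S)>0$; an induction on $\ell$ handles the nested brackets. If you factorise first in this way, the rest of your argument (the $\ell=1$ projective-bundle computation and the sign $(-1)^{h^0(K_S)}$ from Definition \ref{last}) goes through as you wrote it.
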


\begin{proof}
We let $\alpha=(r,c_1=c_1(L),k)$ and use \eqref{Lgen} so that the only splittings of $\alpha=\sum_i\alpha_i$ into pieces of the same reduced Hilbert polynomial are of the form $\alpha_i=\delta_i\alpha$ with $\sum_i\delta_i=1$.

This simplifies \cite[Equation 13.26]{JS}.
The first ($\ell=1$) term of the $(\ \cdot\ )^L$ piece gives what we want $P_{r,L,k}(n)$ to be:
\begin{eqnarray} \nonumber
-\tilde\Psi^{\cB_{p_\alpha}}\left(\big[\bar\epsilon_{(0,1)},\,\bar\epsilon_{(\alpha,0)}\big]^L\right)
&=& -\bar\chi\big((0,1),(\alpha,0)\big)\tilde\Psi^{\cB_{p_\alpha}}\big(\bar\epsilon_{(0,1)}\big)\tilde\Psi^{\cB_{p_\alpha}}\big((\bar\epsilon_{(\alpha,0)})^L\big) \\
&=& (-1)^{\chi(\alpha(n))-1\,}\chi(\alpha(n))\,JS_{(r,k)}^L(X) \label{furst}
\end{eqnarray}
by \cite[Proposition 13.13 and Equation 13.30]{JS}.

We will show that the other terms contribute zero by induction on $\ell$. 
The base case is the second ($\ell=2$) term in \cite[Equation 13.26]{JS}, which contributes
\beq{compyou}
\frac12\,\tilde\Psi^{\cB_{p_\alpha}}\!\left(\big[\big[\bar\epsilon_{(0,1)},\,\bar\epsilon_{(\alpha_1,0)}\big],\,\bar\epsilon_{(\alpha_2,0)}\big]^L\right),
\eeq
where $\alpha_i=\delta_i\alpha$ for some $\delta_i$ with $\delta_1+\delta_2=1$. We evaluate this by first pushing down to $\Jac(S)$ via the determinant of the sheaves parameterised by $\bar\epsilon_{(\alpha_1,0)}$, i.e. via
$$
\det\circ\,\pi_*\colon\,\mathrm{Coh}_c(X)_{\alpha_1}\To\Pic_{\delta_1c_1}(S),
$$
before then pushing down to a point. That is, over $M\in\Jac(S)$ we compute
\beq{comp2}
\frac12\,\tilde\Psi^{\cB_{p_\alpha}}\Big[\big[\bar\epsilon_{(0,1)},\,\bar\epsilon_{(\alpha_1,0)}\big]^M,\,\big(\bar\epsilon_{(\alpha_2,0)}\big)^{M^{-1}\otimes L}\Big]
\eeq
--- the contribution of extensions (in both directions) between objects of the first stack (with determinant $M$) and objects of the second with determinant $M^{-1}\otimes L\in\Pic_{\delta_2c_1}(S)$. The result is a constructible function of $M$ whose Euler characteristic we take over $\Pic_{\delta_1c_1}(S)\ni M$ to calculate \eqref{compyou}. 

By \cite[Proposition 3.13 and Equation 13.30]{JS}, \eqref{comp2} is
$$
\frac12\bar\chi\big((\alpha_1,1),(\alpha_2,0)\big)\tilde\Psi^{\cB_{p_\alpha}}\Big(\big[\bar\epsilon_{(0,1)},\,\bar\epsilon_{(\alpha_1,0)}\big]^M\Big)
\tilde\Psi^{\cB_{p_\alpha}}\!\left(\big(\bar\epsilon_{(\alpha_2,0)}\big)^{M^{-1}\otimes L}\right).
$$
We have already seen in \eqref{furst} that the first $\tilde\Psi^{\cB_{p_\alpha}}$ term is independent of $M$. We claim that so is the second.
Therefore the pushdown is a \emph{constant} constructible function on $\Pic_{\delta_1c_1}(S)$. Since $h^{0,1}(S)>0$ this has Euler characteristic zero, and \eqref{compyou} indeed vanishes.\medskip

To prove the claim we use the action (by tensoring) of $\Jac(S)$ on Coh$_c(X)$. This preserves $\chi^B$ since the Behrend function is intrinsic to the stack.

Consider the action of $\Jac(S)$ on the stack $\mathfrak{Ext}$ by tensoring all 3 terms of \eqref{exten} by any $M\in\Jac(S)$. Under the projection $\pi_1\times\pi_2$ it covers the diagonal $\Jac(S)$ action on Coh$_c(X)\times$Coh$_c(X)$, and under the projection $\pi$ it covers the usual $\Jac(S)$ action on Coh$_c(X)$.

Applying this in the diagram \eqref{Cart} we find that given any two stacks $U,V$ with $\Jac(S)$ actions and \emph{equivariant} morphisms $U,V\to$ Coh$_c(X)$, their Hall algebra product $U*V\to$ Coh$_c(X)$ inherits a $\Jac(S)$ action. Applied inductively to the $1_{\cN^{ss}_\alpha}$, we find that the $\epsilon(\alpha)$ and their Hall algebra products all carry a $\Jac(S)$ action covering that on Coh$_c(X)$.
Since the rank of $\alpha$ is $>1$, this $\Jac(S)$ action takes any fibre $(\bar\epsilon_{(\alpha_2,0)})^{L_1}$ of $\bar\epsilon_{(\alpha_2,0)}$ over $L_1\in\Jac(S)$ isomorphically to any other fibre $(\bar\epsilon_{(\alpha_2,0)})^{L_2}$. So their integrals $\Psi^{\cB_{p_\alpha}}$ are the same, because the isomorphism preserves the pullback of $\chi^B$ from Coh$_c(X)$.\medskip

The other terms $\ell\ge3$ vanish for similar reasons. By induction they are of the form (a constant times)
\beq{comp3}
\tilde\Psi^{\cB_{p_\alpha}}\left(\left[F,\,\bar\epsilon_{(\alpha_\ell,0)}\right]^L\right)
\eeq
where $F$ is a stack function taking values in the objects of $\cB_{p_\alpha}$ with charge $((1-\delta_\ell)\alpha,1)$ whose pushdown to $\Pic_{(1-\delta_\ell)c_1}(S)$,
\beq{consta}
M\ \Mapsto\ \tilde\Psi^{\cB_{p_\alpha}}\big(F^M\big)\ \ \mathrm{is\ \emph{constant}}.
\eeq
Now \eqref{comp3} is the Euler characteristic of the constructible function
\beqa
M &\Mapsto& \tilde\Psi^{\cB_{p_\alpha}}\Big[F^M,\big(\bar\epsilon_{(\alpha_\ell,0)}\big)^{M^{-1}\otimes L}\Big] \\
&=& \bar\chi\big(((1-\delta_\ell)\alpha,1),(\alpha_\ell,0)\big)\tilde\Psi^{\cB_{p_\alpha}}\big(F^M\big)
\tilde\Psi^{\cB_{p_\alpha}}\left(\big(\bar\epsilon_{(\alpha_\ell,0)}\big)^{M^{-1}\otimes L}\right),
\eeqa
by \cite[Proposition 3.13 and Equation 13.30]{JS}. By \eqref{consta} this is also constant on $\Pic_{(1-\delta_\ell)c_1}(S)$. Thus its Euler characteristic vanishes.
\end{proof}

\section{K3 surfaces} \label{Kthree}
We first need a foundational result: that $\C^*$-fixed Higgs pairs are in fact $\C^*$-equivariant. For simple (e.g. stable) pairs this is standard --- one can apply \cite[Proposition 4.4]{Ko} to the sheaves $\cE_\phi$, for instance. For pairs with non-scalar automorphisms (e.g. strictly semistable pairs) we have to work a bit harder.

\begin{prop}\label{equi} If $(E,\phi)$ is fixed by the $\C^*$ action scaling $\phi$ then $E$ admits an algebraic $\C^*$ action
$$
\Psi\colon\C^*\To\Aut(E)
$$
such that $\Psi_t\circ\phi\circ\Psi_t^{-1}\ =\ t\phi$ for all $t\in\C^*$.
\end{prop}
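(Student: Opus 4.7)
The plan is to realise $\Psi$ as an algebraic splitting of an extension of algebraic groups. Since $E$ is coherent on the projective surface $S$, the endomorphism algebra $\End(E)$ is finite-dimensional and $\Aut(E)\subset\End(E)$ is a linear algebraic group (the open subvariety of units). Likewise $\Hom(E,E\otimes K_S)$ is finite-dimensional, and the conjugation map
$$c_\phi\colon\Aut(E)\To\Hom(E,E\otimes K_S),\qquad g\Mapsto (g\otimes\id_{K_S})\circ\phi\circ g^{-1},$$
is a morphism of varieties. I therefore form the closed algebraic subgroup
$$\widetilde G\ :=\ \big\{(g,t)\in\Aut(E)\times\C^*\,:\,c_\phi(g)=t\phi\big\}\ \subset\ \Aut(E)\times\C^*.$$
Its second projection $\pi\colon\widetilde G\to\C^*$ is an algebraic group homomorphism, and the hypothesis that $(E,\phi)\cong(E,t\phi)$ for every $t\in\C^*$ is exactly the statement that $\pi$ is surjective on $\C$-points; since the image of an algebraic group homomorphism is a closed subgroup, $\pi$ is surjective as a morphism. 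Its kernel is the centraliser $Z:=\{g\in\Aut(E):g\phi=\phi g\}$, and one obtains an extension
$$1\To Z\To\widetilde G\rt{\pi}\C^*\To 1.$$
Constructing $\Psi$ amounts to producing an algebraic section of $\pi$.

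To split $\pi$ I pass to a subtorus of $\widetilde G$. Pick any $t_0\in\C^*$ of infinite order and a preimage $g_0\in\pi^{-1}(t_0)$, and take its Jordan decomposition $g_0=g_0^sg_0^u$ inside the linear algebraic group $\widetilde G$. Because $\C^*$ contains no nontrivial unipotent element, $\pi(g_0^u)=1$ and $\pi(g_0^s)=t_0$. Let $T\subset\widetilde G$ be the identity component of the Zariski closure of $\langle g_0^s\rangle$: this is an algebraic torus, and $\pi(T)$ is a connected algebraic subgroup of $\C^*$ whose image meets the infinite cyclic group $\langle t_0\rangle$ in a cofinite set, hence equals $\C^*$. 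Any surjection of algebraic tori $T\to\C^*$ splits algebraically: writing $T\cong(\C^*)^n$ so that $\pi|_T$ is a Laurent monomial with coprime exponents, one completes those exponents to a $\Z$-basis of $\Z^n$ and reads off a section $\C^*\hookrightarrow T$. Composing with $T\subset\widetilde G\to\Aut(E)$ yields the desired algebraic homomorphism $\Psi\colon\C^*\to\Aut(E)$, and the defining equation $c_\phi(\Psi_t)=t\phi$ of $\widetilde G$ is precisely the required identity $\Psi_t\circ\phi\circ\Psi_t^{-1}=t\phi$.

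The main obstacle is the splitting of $\pi$ itself: an arbitrary surjection of linear algebraic groups onto $\C^*$ need not split (a unipotent source would be a counterexample), and what rescues us is the Jordan decomposition, which forces $\widetilde G$ to contain an algebraic torus covering the quotient. Once such a torus is in hand the remaining task is linear algebra over $\Z$, and the only other delicate point -- that $\widetilde G$ is a genuine algebraic group rather than a mere set of $\C$-points -- follows at once from the fact that $\Aut(E)$ is affine and $c_\phi$ is a morphism, so the defining equation cuts out a closed subscheme of $\Aut(E)\times\C^*$.
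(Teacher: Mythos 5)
Your setup is correct up to the last step, but the splitting of $\pi\colon\widetilde G\to\C^*$ is where the real content of the proposition lives, and your argument for it has a gap. A surjective character of a torus need not admit a section: $u\mapsto u^2$ on $\C^*$ is surjective but is not primitive in the character lattice, and nothing in your construction guarantees that $\pi|_T$ has coprime exponents --- that clause is asserted, not proved. Worse, the construction as written can genuinely produce an imprimitive character. Take $E=L_0\oplus L_1$ with $\Hom(L_0,L_1)=0=\Hom(L_1,L_0)$ and $\phi$ the off-diagonal map $L_0\to L_1\otimes K_S$, so that $g=\mathrm{diag}(a,b)$ satisfies $g\phi g^{-1}=(b/a)\phi$. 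Choosing $t_0=4$ and the (perfectly legitimate, semisimple) preimage $g_0=\mathrm{diag}(1/2,2)$, the closure of $\langle g_0\rangle$ is the torus $T=\{\mathrm{diag}(u^{-1},u)\}$, on which $\pi$ is $u\mapsto u^2$; this surjects onto $\C^*$ but admits no algebraic section, since every cocharacter $s\mapsto u=s^k$ gives $\pi=s^{2k}\neq s$. So your torus $T$ depends on the choice of $g_0$ in its $Z$-coset, and for bad choices the final "linear algebra over $\Z$" step fails. (Enlarging $T$ by the central scalars does not help: in the example above one still gets exponents $(0,2)$.)

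What rescues the statement is not a formal group-theoretic fact but the structure of $\phi$ itself, and this is exactly what the paper's proof extracts. There one fixes a single $t$ of infinite order, decomposes $E$ into generalised eigenspaces $V_{\lambda_j}$ of one automorphism $\psi_t$, and shows from $\psi_t\phi\psi_t^{-1}=t\phi$ that $\phi$ maps $V_\lambda$ into $V_{t\lambda}$. Hence within each chain of eigenvalues related by powers of $t$ one can assign \emph{integer} weights $\mu_j$ that increase by exactly $1$ along $\phi$, and $\Psi_s=\bigoplus_j s^{\mu_j}\id_{V_{\lambda_j}}$ is then a genuine one-parameter subgroup conjugating $\phi$ to $s\phi$ (first for $s=t^n$, then for all $s$ by Zariski density). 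In your language, this eigenvalue bookkeeping is precisely the verification that a suitable subtorus of $\widetilde G$ carries $\pi$ as a \emph{primitive} character --- equivalently, it is a recipe for choosing $g_0$ within its $Z$-coset so that the closure of $\langle g_0\rangle$ splits $\pi$. Your proof would be complete if you supplied that normalisation; as it stands, the coprimality claim is the missing idea rather than a routine observation.
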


\begin{proof}
Since $(E,t\phi)$ must be isomorphic $(E,\phi)$ we get, for each $t\in\C^*$, an automorphism $\psi_t$ of $E$ which conjugates $t\phi$ into $\phi$:
\beq{cstar}
\psi_t\circ\phi\circ\psi_t^{-1}\ =\ t\phi.
\eeq
We will show that the $\psi_t$ may be chosen to define a $\C^*$ action on $E$, i.e. such that $\psi_s\circ\psi_t=\psi_{st}$ for all $s,t\in\C^*$.

\emph{Fix} a $t\in\C^*$ which generates a Zariski dense subset $\{t^n\colon n\in\Z\}$ of $\C^*$. Since $S$ is compact, $\det(\lambda\id-\psi_t)$ is constant on $S$. Thus the eigenvalues $\lambda_j\in\C$ of $\psi_t$ are constant. Let $V_{\lambda_j}=\ker(\psi_t-\lambda_j)^N,\ N\gg1$, be the generalised eigenspaces of $E$, so
\beq{bits}
E\ =\ \bigoplus_jV_{\lambda_j}.
\eeq
Then \eqref{cstar} gives the identities 
\beqa
\psi_t\phi=t\phi\psi_t &\so& (\psi_t-\lambda t)\phi\ =\ t\phi(\psi_t-\lambda) \\
&\so& (\psi_t-\lambda t)^N\phi\,=\,t^N\phi(\psi_t-\lambda)^N.
\eeqa
Applied to $v\in V_{\lambda}$ we get zero, which shows that $\phi v\in V_{t\lambda}$. Therefore $\phi$ acts blockwise on the decomposition \eqref{bits} mapping any $V_{\lambda}$ to $V_{t\lambda}$\;.
Therefore, if we define the block diagonal operator
\beq{tilder}
\widetilde\Psi_t\ =\ \bigoplus_j\lambda_j\id_{V_{\lambda_j}}
\eeq
then this also satisfies $\widetilde\Psi_t\circ\phi\circ\widetilde\Psi_t^{-1}\ =\ t\phi$.

Say that $\lambda_i\sim\lambda_j$ whenever there is some $n\in\Z$ such that $\lambda_i=t^n\lambda_j$. In any equivalence class, choose a representative $\lambda$ and write the other elements as $\lambda_j=t^{\mu_j}\lambda$ for integers $\mu_j$. Then replacing \eqref{tilder} by
\beq{formu}
\Psi_t\ =\ \bigoplus_jt^{\mu_j}\id_{V_{\lambda_j}},
\eeq
this \emph{also} satisfies $\Psi_t\circ\phi\circ\Psi_t^{-1}\ =\ t\phi$. We deduce that $\Psi_{t^n}\circ\phi\circ\Psi_{t^n}^{-1}\ =\ t^n\phi$ for every $n\in\Z$, so by Zariski denseness we conclude that
\[
\Psi_s\circ\phi\circ\Psi_s^{-1}\ =\ s\phi \quad\forall s\in\C^*.
\]
Thus \eqref{formu} defines our required $\C^*$ action on $E$.
\end{proof}

We now show how the $\vw$ theory works on K3 surfaces. We first illustrate the theory with rather explicit calculations in rank 2, before switching to more abstract results in general rank.

\subsection{Rank 2}
We consider semistable rank $r=2$ Higgs sheaves $(E,\phi)$ on a fixed polarised K3 surface $(S,\cO_S(1))$. We fix $\det E=\cO_S$ and $\tr\phi=0$. We use $\t$ to denote the one dimensional $\C^*$ representation of weight 1. 

\begin{lem} \label{ssH}
If $(E,\phi)$ is Gieseker semistable and $\C^*$-fixed, then $E$ is itself Gieseker semistable. Moreover, either $\phi=0$ or $c_2(E)=2k$ is even and (up to an overall twist by some power of $\t$)
\beq{sshp}
E\ =\ I_Z\oplus I_Z\cdot\t^{-1}, \qquad \phi=\mat0010,
\eeq
for some length $k$ subscheme $Z\subset S$.
\end{lem}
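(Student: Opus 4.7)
The plan is to apply Proposition \ref{equi} to the $\C^*$-fixed Higgs pair $(E,\phi)$ to produce an algebraic $\C^*$-action $\Psi\colon\C^*\to\Aut(E)$ satisfying $\Psi_t\circ\phi\circ\Psi_t^{-1}=t\phi$, and then to examine the resulting weight decomposition $E=\bigoplus_w E_w$. Writing out the intertwining relation shows that $\phi(E_w)\subset E_{w+1}\otimes K_S$, so $\phi$ shifts weight by one. Because $\rk E=2$, either $E$ sits in a single weight piece --- in which case $\phi=0$ since there is nothing in the next weight --- or $E=E_a\oplus E_{a+1}$ splits as a sum of two rank one torsion-free sheaves and $\phi$ reduces to a single nontrivial block $\phi_0\colon E_a\to E_{a+1}\otimes K_S$ (and $K_S\cong\cO_S$ on the K3). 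After twisting $\Psi$ by an overall power of $\t$ I may normalise these weights to $-1$ and $0$.

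Next I identify the summands. Each $E_{a+i}$ is rank one torsion-free, hence of the form $I_{Z_i}\otimes L_i$ for some $0$-dimensional $Z_i\subset S$ and some $L_i\in\Pic(S)$; the condition $\det E\cong\cO_S$ gives $L_1 L_2\cong\cO_S$. The nonzero block $\phi_0$ provides a nonzero section of $L_1^{-1}L_2=L_1^{-2}$, which forces $c_1(L_1)\cdot h\le 0$ where $h=c_1(\cO_S(1))$. Conversely $E_{a+1}$ is $\phi$-invariant (its $\phi$-image lies in $E_{a+2}\otimes K_S=0$), so Gieseker semistability of $(E,\phi)$ yields $p_{E_{a+1}}\le p_E$, whose linear-in-$n$ coefficient translates to $c_1(L_1)\cdot h\ge 0$. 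Combining these inequalities with the genericity of $\cO_S(1)$ pinches $c_1(L_1)$ to $0$, and the injectivity of $c_1\colon\Pic(S)\to H^2(S,\Z)$ on a K3 (using $H^1(\cO_S)=0$) then gives $L_1=L_2=\cO_S$.

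With the line bundle parts trivial, $\phi_0\in\Hom(I_{Z_1},I_{Z_2})\subset\Hom(I_{Z_1},\cO_S)=\C$ is the restriction of multiplication by a nonzero scalar $c$, which forces $I_{Z_1}\subseteq I_{Z_2}$, i.e., $Z_2\subseteq Z_1$. Semistability of $(E,\phi)$ applied to the same $\phi$-invariant inclusion compares Hilbert polynomials and gives $|Z_2|\ge|Z_1|$, so $Z_1=Z_2=:Z$; thus $c_2(E)=2|Z|=2k$ is even, and rescaling $\phi$ by $c^{-1}$ puts it into the stated form $\mat{0}{0}{1}{0}$. Finally, the underlying sheaf $E=I_Z\oplus I_Z$ is itself Gieseker semistable, because any saturated rank one subsheaf of $E\subset E^{\vee\vee}=\cO_S^{\oplus 2}$ is the intersection with $E$ of some line $\cO_S\hookrightarrow\cO_S^{\oplus 2}$, which is again an $I_Z$ of the same reduced Hilbert polynomial as $E$. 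The hardest part of all this is arranging the two \emph{opposite} Gieseker-semistability inequalities --- one extracted from the existence of a nonzero Higgs block, the other from $\phi$-invariance of the ``top'' summand --- to pinch the Chern classes down to zero; once that is done, the K3-specific inputs ($K_S\cong\cO_S$, injectivity of $c_1$, and generic $h$) finish the identification.
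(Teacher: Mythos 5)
Your argument is essentially the paper's: invoke Proposition \ref{equi}, decompose $E$ into weight spaces, use the fact that $\phi$ shifts weight by one to reduce to $E=E_a\oplus E_{a+1}$ with a single nonzero block, and then pinch the two summands together by playing the semistability inequality for the $\phi$-invariant summand against the opposite inequality supplied by the nonzero Higgs block. Three blemishes in the execution are worth flagging. First, you assert without comment that the weight pieces are torsion-free of rank one; the paper derives this from semistability (the torsion subsheaf of the extreme-weight piece is killed by $\phi$, hence $\phi$-invariant, hence zero), since purity in the Higgs sense only rules out $\phi$-invariant torsion. Second, genericity of $\cO_S(1)$ in the sense of \eqref{Lgen} is not what kills $c_1(L_1)$: a nonzero section of $L_1^{-2}$ vanishes on an effective divisor of degree zero against the ample class, which already forces $L_1^{-2}\cong\cO_S$, and then $L_1\cong\cO_S$ since $\Pic(S)$ is torsion-free. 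Third, your final claim that every saturated rank-one subsheaf of $I_Z^{\oplus 2}\subset\cO_S^{\oplus 2}$ is the trace of a \emph{constant} line $\cO_S\hookrightarrow\cO_S^{\oplus 2}$ is false --- it can be any line subbundle $M$ with $M^{-1}$ effective --- though such $M$ are even more destabilised, so the conclusion (a direct sum of two copies of the stable sheaf $I_Z$ is semistable) stands. The paper reaches \eqref{sshp} more directly by noting that the nonzero block is an injection of rank-one torsion-free sheaves whose Hilbert polynomials agree by semistability, hence an isomorphism; your separate treatment of the line-bundle and ideal-sheaf parts is a longer route to the same place.
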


\begin{proof}
Let $(E,\phi)$ be $\C^*$-fixed. If $\phi=0$ then $E$ is a semistable $\C^*$-fixed sheaf and we are done. So we now assume that $\phi\ne0$. Then by Proposition \ref{equi} the sheaf $E$ carries a $\C^*$ action acting with weight 1 on $\phi$. Thus $E=\oplus_i E_i$ splits into weight spaces $E_i$ on which $\lambda\in\C^*$ acts as $\lambda^i$.

Since $\phi$ decreases weight it maps the lowest weight torsion subsheaf to zero. This subsheaf is therefore $\phi$-invariant, and so zero by semistability. Therefore each of the $E_i$ are torsion-free, and so in particular have rank$\,>0$. Thus they have rank 1, and there are only two of them:
$$
E\ =\ E_i\oplus E_j,
$$
with $i>j$ without loss of generality. Since the Higgs field has weight 1, it takes weight $k$ to weight $k-1$. It is also nonzero, so we must have $j=i-1$ and the only nonzero component of $\phi$ maps $E_i$ to $E_{i-1}$.

Tensoring $E$ by $\t^{-i}$ (i.e. multiplying the $\C^*$ action on $E$ by $\lambda^{-i}\cdot\id_E$) we may assume without loss of generality that $i=0$ and $j=-1$. Considering $\phi$ as a weight 0 element of $\Hom(E,E)\otimes\t$, we have
\beq{rk1}
E\ =\ E_0\oplus E_{-1}\ \ \mathrm{and}\ \ \phi\ =\ \mat{0}{0}{\Phi}{0}\mathrm{\ \ for\ some\ \ }\Phi\colon E_0\To E_{-1}\cdot\t.
\eeq
Therefore $E_{-1}\subset E$ is $\phi$-invariant, so by the definition of Higgs semistability we get the inequality
$$
\chi(E_0(n))\ \ge\ \chi(E_{-1}(n))  \quad\forall n\gg0.
$$
Since both $E_i$ are torsion free, $\Phi$ is an injection, implying the opposite inequality
$$
\chi(E_0(n))\ \le\ \chi(E_{-1}(n)) \quad\forall n\gg0.
$$
Hence $\Phi$ is actually an isomorphism, and $E=E_0\oplus E_0\cdot\t^{-1}$, which is semistable because $E_0$ is.

Finally, since $E_0$ is rank 1 torsion free with trivial determinant, it is an ideal sheaf $I_Z$, where $Z$ has length $c_2(E)/2$.
\end{proof}

\begin{prop} \label{-1}
Let $(S,\cO_S(1))$ be a K3 surface. At the $\C^*$-fixed points of the stack of rank 2 semistable Higgs pairs $(E,\phi)$ on $S$ with $\det E\cong\cO_S$ and $\tr\phi=0$, we have $\chi^B=-1$.
\end{prop}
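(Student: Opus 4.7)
The approach is to apply Lemma \ref{ssH} to reduce to the two types of $\C^*$-fixed semistable Higgs pairs described there, and compute the Behrend function separately in each case via a local analysis of the moduli stack of compactly supported sheaves on the CY3 $X = K_S = S \times \C$.

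In case (a), where $\phi = 0$, the spectral sheaf $\cE_\phi = \iota_* E$ is pushed forward from the zero section $\iota \colon S \hookrightarrow X$, and the local structure of the moduli stack at $\cE_\phi$ may be analysed via dimensional reduction (after Behrend-Bryan-Szendr\H oi and Davison, reviewed in \cite[Section 5]{JT}): the Behrend function on the moduli stack of compactly supported sheaves on $X$ is determined by intrinsic data on $\curly M_S$, the moduli stack of sheaves on $S$. On a K3, $\curly M_S$ is $0$-shifted symplectic with even-dimensional smooth locus at stable points, so the scheme-level Behrend function is $+1$ there; tracking the signs produced by dimensional reduction and by Joyce's stack-gerbe factor $(-1)^{\dim \Aut}$ from the $B\C^*$-factorisation in \cite[Proposition 3.4]{JS} then yields $\chi^B = -1$. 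The polystable case $E = I_Z \oplus I_Z$ is handled by using the multiplicative structure of $\chi^B$ on a critical locus slice through the orbit.

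In case (b), where $(E,\phi)$ has the form \eqref{sshp} with nilpotent Higgs field, the spectral sheaf $\cE_\phi$ is scheme-theoretically supported on the second infinitesimal neighborhood $S \times \Spec \C[\epsilon]/(\epsilon^2) \subset X$ with underlying rank-one sheaf $I_Z$. I would describe a slice of the stack transverse to the $\C^*$-orbit using the $R\Hom_\perp$ deformation theory of \cite{TT1}, and reduce the Behrend function computation to that of the Hilbert scheme $\Hilb^k S$ at $I_Z$ via the fixed-point splitting together with a cotangent-direction contribution from the nilpotent Higgs field. Since $\Hilb^k S$ is smooth of even dimension $2k$, its scheme Behrend function is $+1$ at $I_Z$, and the remaining sign from the Higgs direction (together with the $\C^*$-stabiliser factor) produces $-1$.

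The main obstacle is case (b), where the spectral sheaf is non-reduced in the $K_S$-direction and the dimensional reduction statement is less standard than in case (a); here one must carefully produce a local Darboux-type model of the symmetric obstruction theory on $\curly M_S$ that splits the Higgs direction off from the $\Hilb^k S$-directions. A related subtlety is to handle the stabiliser contributions uniformly across both cases --- the $\C^*$ in case (b), and the $GL_2$ at the fully polystable locus $E = I_Z \oplus I_Z$ in case (a) --- so that Joyce's $(-1)^{\dim \Aut}$ factor combines correctly with the ambient-space parity to give the uniform answer $-1$.
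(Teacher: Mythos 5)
Your case (a) ($\phi=0$) is essentially the paper's argument: cite dimensional reduction for the zero section of the $(-1)$-shifted cotangent bundle, so $\chi^B=(-1)^{\vd}$ with $\vd=1-\chi(E,E)$ odd on a K3 with fixed determinant. (Your extra manoeuvre with a ``critical locus slice through the orbit'' for the polystable locus is not needed --- the dimension reduction statement of \cite{BBS, Da}, \cite[Section 5]{JT} already applies to the whole zero section, polystable points included.)

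The genuine gap is case (b), and it is exactly the step you flag as ``the main obstacle'' and then do not carry out. At a fixed point of type \eqref{sshp} the underlying sheaf is the polystable $E=I_Z\oplus I_Z\cdot\t^{-1}$, whose deformation space inside $\curly M_S$ is \emph{not} $T_{I_Z}\Hilb^kS$ plus a Higgs direction: it is $\mathfrak{sl}_2\otimes\Ext^1_S(I_Z,I_Z)$ (plus the trace factor $\Ext^1_S(I_Z,I_Z)$), cut down by the quadratic obstruction $A\mapsto A\cup A\in\mathfrak{sl}_2$ and quotiented by $GL_2$. This is the content of the Kaledin--Lehn local model \cite[Proposition 3.3]{KaLe}, which your proposal never invokes. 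Passing to $T^*[-1]\curly M_S$, the Higgs stack near $(E,\phi)$ is the critical locus of the cubic
$$
(A,\phi)\ \longmapsto\ \tr\big(\phi(A\cup A)\big)
$$
on $\mathfrak{sl}_2\otimes\Ext^1_S(I_Z,I_Z)\oplus\mathfrak{sl}_2$. This function genuinely couples $\phi$ to $A$, so there is no ``Darboux-type model that splits the Higgs direction off from the $\Hilb^kS$-directions''; the answer does not come from a product formula $\chi^B(\Hilb^kS)\times(\text{sign from the Higgs direction})$. What actually happens --- and this is the computation your proposal replaces with an assertion --- is that at points with $\phi\ne0$ the critical equations force $A\in\langle\phi\rangle\otimes\Ext^1_S(I_Z,I_Z)$, so the critical locus is a smooth $\Ext^1_S(I_Z,I_Z)$-bundle over $\mathfrak{sl}_2\setminus\{0\}$, hence (after multiplying by the trace factor) smooth of \emph{odd} dimension, giving $\chi^B=-1$ directly; dividing by the even-dimensional $GL_2$ leaves this unchanged. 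Without exhibiting the local critical-chart and verifying this smoothness, the sign in case (b) is unjustified, so the proof is incomplete at its essential point.
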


\begin{rmk} \label{kaistack} Here our stack has full stabiliser groups, i.e. even simple (e.g. stable) Higgs pairs have stabiliser $\C^*$. If we rigidify by removing multiples of the identity, we change $\chi^B$ to $+1$. If we do not fix $\tr\phi=0$ we change the sign again, since $h^0(K_S)=1$. In particular, the forgetful map from the moduli space of stable Joyce-Song pairs $\cP_\alpha$ in class $\alpha$ to the stack of semistable Higgs pairs (with no condition on $\tr\phi$) is smooth of dimension $\chi(\alpha(n))$, so $(\cP_{\alpha})^{\C^*}$ has Kai function $\equiv(-1)^{\chi(\alpha(n))}$.

We also note that the result of Proposition \ref{-1} is extended to arbitrary rank sheaves on K3 surfaces in \cite{MT}, using completely different methods.
\end{rmk}

\begin{proof} When $\phi=0$ this is a by now well-known result called ``dimension reduction" for $(-1)$-shifted cotangent bundles: the Behrend function is $(-1)^{\vd}$ on its zero section \cite{BBS, Da}, \cite[Section 5]{JT}. Here $\vd$ is the virtual dimension $ext^1(E,E)_0-ext^2(E,E)_0-hom(E,E)=1-\chi(E,E)$ of the moduli stack of sheaves $E$ on $S$ with fixed determinant.

About the other fixed points \eqref{sshp} we need an explicit local model for the moduli stack of objects of $\mathrm{Coh}_c(X)$. On $S$, rather than $X$, the local model is given by \cite[Proposition 3.3]{KaLe}: near $I_Z\oplus I_Z$, the moduli stack $\curly M_S$ of sheaves on $S$ looks like the product of $\Ext^1_S(I_Z,I_Z)$ with the quotient by $GL_2$ of the zero locus of the cup product map
\begin{eqnarray} \label{mc}
\mathfrak{sl}_2\otimes\Ext^1_S(I_Z,I_Z) &\To& \mathfrak{sl}_2 \\
A \quad &\Mapsto& A\cup A \nonumber
\end{eqnarray}
Here $\cup$ denotes the Lie bracket on $\mathfrak{sl}_2$ tensored with the cup product $$\Ext^1_S(I_Z,I_Z)\otimes\Ext^1_S(I_Z,I_Z)\To\Ext^2_S(I_Z,I_Z)\cong\C,$$ and $GL_2$ acts by the adjoint action on $\mathfrak{sl}_2$ and by the identity on $\Ext^1_S(I_Z,I_Z)$.

The Higgs pair \eqref{sshp} is a point of the $(-1)$-shifted cotangent bundle $T^*[-1]\curly M_S$ of $\curly M_S$, with $\phi\in\Hom(E,E)_0\cong\mathfrak{sl}_2$ a point of the fibre over $E=I_Z\oplus I_Z\in\curly M_S$. From \eqref{mc} and the description of $(-1)$-shifted cotangent bundles \cite[Proposition 2.8]{JT} we find a local model for $T^*[-1]\curly M_S$ about $(E,\phi)$. It is the product of $\Ext^1_S(I_Z,I_Z)$ with the quotient by $GL_2$ of the critical locus of the function
\begin{eqnarray} \label{fn}
\mathfrak{sl}_2\otimes\Ext^1_S(I_Z,I_Z)\ \oplus\ \mathfrak{sl}_2 &\To& \quad\C \\
(A,\,\phi)\quad &\Mapsto& 
\tr\big(\phi(A\cup A)\big). \nonumber
\end{eqnarray}
To describe this critical locus, fix a symplectic basis $e_i,f_i$ for $\Ext^1_S(I_Z,I_Z)$. (That is, $\tr(e_i\cup e_j)=0=\tr(f_i\cup f_j)$ and $\tr(e_i\cup f_j)=\delta_{ij}$.) Writing $A\in\mathfrak{sl}_2\otimes\Ext^1_S(I_Z,I_Z)$ as $\sum_iA_{e_i}\otimes e_i+\sum_iA_{f_i}\otimes f_i$ with $A_{e_i},\,A_{f_i}\in\mathfrak{sl}_2$,
the derivative of the function \eqref{fn} down $(a\otimes e_i,0)$ is therefore
$$
2\tr\big(\phi\big[a,A_{f_i}\big]\big)\ =\ -2\tr\big(a\big[\phi,A_{f_i}\big]\big).
$$
The vanishing of this for all $a\in\mathfrak{sl}_2$ is equivalent to the vanishing of $[\phi,A_{f_i}]$. Replacing $e_i$ by $f_i$ we similarly get the vanishing of $[\phi,A_{f_i}]$ for all $i$.
We conclude that at a point with $\phi\ne0$, each $A_{e_i},\,A_{f_i}$ is proportional to $\phi$, i.e.
\beq{zeroa}
A\in\langle\phi\rangle\otimes\Ext^1_S(I_Z,I_Z).
\eeq
In turn this forces the derivative
$$
\tr\big(\psi(A\cup A)\big)
$$
of \eqref{fn} down $(0,\psi)$ to vanish, so \eqref{zeroa} is precisely the critical locus. (With a bit more care, writing out the equations via a basis for $\mathfrak{sl}_2$, one can see that the scheme structure of the critical locus is the reduced one on the locus \eqref{zeroa}.)

In particular we see that for $\phi\ne0$ the critical locus is smooth --- it is an $\Ext^1_S(I_Z,I_Z)$-bundle over $\mathfrak{sl}_2\take\{0\}$. Multiplying by $\Ext^1_S(I_Z,I_Z)$ we get a smooth odd dimensional space whose Behrend function is therefore $-1$. Dividing by $GL_2$, which is even-dimensional, does not change this.
\end{proof}

\noindent ${\mathbf{c_2}}$ {\bf odd.} When $c_2(E)$ is \emph{odd}, by Lemma \ref{ssH} the Higgs field vanishes,  the sheaf $E$ is \emph{stable}, and the moduli space $\cN^\perp_{2,c_2}$ is just the moduli space of instantons on $S$ (pushed forward to $X$). This was observed in \cite{VW} as a case where their vanishing theorem holds. In particular the moduli space is smooth, hyperk\"ahler, has $\chi^B\equiv1$, and is deformation equivalent to
\beq{hil}
\Hilb^{2c_2(E)-3}(S).
\eeq
So by G\"ottsche's formula
$$
\sum_n q^n e(\Hilb^nS)\ =\ \left(\,\prod_{k=1}^\infty\frac1{(1-q^k)}\right)^{\!e(S)}=\ q\,\eta(q)^{-24}
$$
we can evaluate the contribution of odd $c_2$ to the generating function. The result is
\begin{multline} \label{oddd}
\sum_{c_2\ \mathrm{odd}} q^{c_2\,} e(\Hilb^{2c_2(E)-3}S)
\ = \\ \frac14q^2\Big(\eta(q^{1/2})^{-24}+\eta(-q^{1/2})^{-24}-\eta((-q)^{1/2})^{-24}-\eta(-(-q)^{1/2})^{-24}\Big).
\end{multline}

\noindent ${\mathbf{c_2}}$ {\bf even.} 
For $c_2(E)$ even, however, no such vanishing result holds and we have to deal with strictly semistable Higgs pairs. We take a Joycian approach, and compare the result to the predictions of Vafa and Witten. By enforcing modularity of the final result, they conjectured that the generating function of invariants should be
\beq{pred}
\frac14q^2\eta(q^2)^{-24}+\frac12q^2\Big(\eta(q^{1/2})^{-24}+\eta(-q^{1/2})^{-24}\Big).
\eeq
Here we have adjusted for the $\pm1/|Z(G)|=\pm1/2$ difference in our invariants, and omitted Vafa-Witten's shift $q^{-2}$. Putting these back in gives the modular form of \cite[Equation 4.17]{VW}. Taking only odd powers of $q$ in \eqref{pred} recovers \eqref{oddd}.

In particular the prediction \eqref{pred} starts with
$$
\vw_{2,0}(S)=\frac14,\quad \vw_{2,1}(S)=0,\quad \vw_{2,2}(S)=\frac{24}4+24=30,
$$
which we shall now check explicitly as an illustration of the theory.

For the first we use
$$
\vw_{1,0}(S)\ =\ 1,
$$
counting the sheaf $\cO_S$ on $X$ (rigid in the space of sheaves with fixed centre of mass 0 on the fibres of $K_S$), and
\begin{eqnarray} \nonumber
P_{2,0}(n) &=& \frac{\chi(\cO_S(n))\big(\chi(\cO_S(n))-1\big)}2\ +\ \big(2\chi(\cO_S(n))-\chi(\cO_S(n))\big) \\ \label{fa}
&=& \frac14\chi(\alpha(n))+\frac12\chi\!\left(\frac\alpha2(n)\right)^2,
\end{eqnarray}
where $\alpha$ is the class $(2,0)$ of $\cO_S\oplus\cO_S$.
The first term is the Euler characteristic of the moduli space $\Gr(2,H^0(\cO_S(n)))$ of stable Joyce-Song pairs with underlying sheaf $\cE=\cO_S\oplus\cO_S$. For the second the $\C^*$-fixed sheaf is $\cE=\cO_{2S}:=\cO_X/I_{S\subset X}^2$ and the corresponding moduli space of stable Joyce-Song pairs is $\PP\big(H^0(\cO_{2S}(n)\take H^0(\cO_S(n))\big)\big/\C$, with Euler characteristic $2\chi(\cO_S(n))-
\chi(\cO_S(n))$. There is no additional sign, due to the identity $\chi^B_{\cP_{2,0}(X)}\big|_{\cP_{2,0}(X)^{\C^*}}\equiv1$ of Remark \ref{kaistack}.

For a class $\alpha$ with divisibility 2 on a K3 surface $S$, \eqref{munch} reads
\beq{peng}
P_{\alpha}(n)\ =\ \chi(\alpha(n))\vw_\alpha(S)+\frac12\chi\!\left(\frac\alpha2(n)\right)^{\!2}\vw_{\alpha/2}(S)^2
\eeq
Comparing to \eqref{fa} gives $\vw_{2,0}(S)=\frac14$, as required.

The second prediction $\vw_{2,1}(X)=0$ already follows from our analysis \eqref{hil} of the odd $c_2$ case, of course.

So we are left with the third, $\vw_{2,2}(X)=30$.

\begin{lem} Any $\C^*$-invariant semistable sheaf $\cE$ on $X$ of class $(2,2)$ and $\det\pi_*\;\cE\cong\cO_S$ is a strictly semistable extension of the form
$$
0\To\iota_*I_x\To\cE\To\iota_*I_y\To0,
$$
for points $x,y\in S$.
\end{lem}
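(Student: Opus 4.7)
The plan is to apply Lemma~\ref{ssH} to the Higgs pair $(E,\phi)=(\pi_*\cE,\pi_*(\eta\cdot\id_\cE))$ corresponding to $\cE$ under the spectral equivalence. Since $c_2(E)=2$ we have $k=1$, so the length-$k$ subscheme $Z\subset S$ in that lemma is a single point $z$. This leaves two cases: either $\phi=0$, or, up to a twist by a power of $\t$, $E\cong I_z\oplus I_z\cdot\t^{-1}$ with $\phi=\mat0010$.

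In the case $\phi\ne0$ I would read the required extension straight off the spectral construction. Viewing $E$ as an $\cO_S[\eta]$-module with $\eta$ acting by $\phi$, both $\ker\eta$ and $\im\eta$ coincide with the second summand $I_z\subset E$, so there is a short exact sequence of $\cO_S[\eta]$-modules
$$
0\,\To\,I_z\,\To\,E\,\To\,I_z\,\To\,0
$$
in which $\eta$ acts trivially on both outer terms. Transporting via the spectral equivalence gives the desired $0\to\iota_*I_z\to\cE\to\iota_*I_z\to0$ on $X$, with $x=y=z$.

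In the case $\phi=0$, $\cE=\iota_*E$ where $E$ is a rank $2$ semistable sheaf on $S$ with $\det E=\cO_S$ and $c_2(E)=2$. I would first rule out the possibility that $E$ is stable. The Mukai vector $v(E)=(2,0,0)=2\cdot(1,0,0)$ is non-primitive, so on a K3 surface Yoshioka's theorem gives $M^s(v)=\emptyset$. (A hands-on argument: stability forces $\Hom(\cO_S,E)=0$, which combined with $\chi(E)=2$ by Riemann--Roch and the Serre duality identification $h^2(E)=\Hom(E,\cO_S)$ on the K3 yields $\Hom(E,\cO_S)\ge2$; any nonzero map $E\to\cO_S$ must be surjective by stability, and two independent such surjections $\phi_1,\phi_2$ combine into an injection $E\hookrightarrow\cO_S^{\oplus2}$ with length-$2$ torsion cokernel $Q$; for any $x\in\operatorname{supp}Q$ a short computation produces an embedding $I_x\hookrightarrow E$, contradicting stability.) Granting that $E$ is strictly semistable, a Jordan--H\"older filtration of $E$ has rank $1$ Gieseker-stable factors with reduced Hilbert polynomial $p_E$ and, by the genericity assumption~\eqref{Lgen} on $\cO_S(1)$, with $c_1=0$. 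Such sheaves are precisely the ideal sheaves $I_x$ of points $x\in S$; the two JH factors give an exact sequence $0\to I_x\to E\to I_y\to0$ on $S$ which pushes forward under $\iota$ to the required extension on $X$.

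The main obstacle is ruling out stable rank $2$ sheaves of Mukai vector $(2,0,0)$ on the K3 surface; with that granted, the rest is a mechanical Jordan--H\"older analysis in the two cases produced by Lemma~\ref{ssH}.
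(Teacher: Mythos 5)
Your proof is correct, and in the main case ($\phi=0$) it takes a genuinely different route from the paper. The $\phi\ne0$ case is essentially identical in both: Lemma \ref{ssH} with $k=1$ gives $E=I_z\oplus I_z\t^{-1}$ with $\phi=\mat0010$, and the $\phi$-invariant second summand yields the extension (the paper phrases the same sheaf as $\pi^*I_z\otimes\cO_{2S}$). For $\phi=0$ the paper argues directly: $h^0(E)=0$ and $\chi(E)=2$ give $\hom(E,\cO_S)\ge2$ by Serre duality, and an analysis of the image and kernel of a map $E\to\cO_S$ (iterated once in the colength-$0$ subcase) produces the extension by hand. You instead first rule out stable $E$ of Mukai vector $(2,0,0)$ --- for which the cleanest justification is that a stable, hence simple, sheaf would have $\ext^1(E,E)=v(E)^2+2=-6<0$ --- and then read the extension off a Jordan--H\"older filtration. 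Your parenthetical ``hands-on'' alternative is in effect a rerun of the paper's argument, so your main line subsumes it. The one point where the two approaches differ in what they need: your JH route must exclude factors of the form $\cO_S(\pm D)$ with $D\cdot H=0$, $D^2=-2$, which you correctly do by invoking the standing genericity assumption \eqref{Lgen} (Hodge index plus $\Pic^0(S)=0$ then pins the factors down to $I_x$, $I_y$); the paper's construction gets this for free because the relevant rank-one quotient is exhibited as a subsheaf of $\cO_S$, forcing its divisor class to be effective and hence zero. What your approach buys is a cleaner conceptual statement (emptiness of the stable locus for non-primitive $v$ with $v^2+2<0$) that generalises immediately to higher rank, at the mild cost of leaning on the genericity of the polarisation.
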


\begin{proof}
We use Lemma \ref{ssH}. If $\phi\ne0$ then by \eqref{sshp} we see that $\cE$ is $\pi^*I_x\otimes\cO_{2S}$ for some $x\in S$. This is an extension of the form required, with $y=x$.

This leaves $\phi=0$, so that $\cE=\iota_*E$ is determined by the sheaf $E=\pi_{*\,}\cE$. By the  semistability of Lemma \ref{ssH} we have $h^0(E)=0$. But $\chi(E)=2$, so
$$
h^2(E)\ =\ \mathrm{hom}(E,\cO_S)\ \ge\ 2.
$$
So we may pick a nonzero map $\phi\colon E\to\cO_S$. Its image is an ideal sheaf $I\subset\cO_S$ which by the semistability of $E$ can only have cokernel of dimension zero and length $0$ or $1$. It is therefore either $\cO_S$ or $I_y$ for some point $y\in S$.

The kernel of $\phi$ is a rank 1 torsion free sheaf of trivial determinant and so is also an ideal sheaf $I_Z$. Since $c_2(E)=2$ we find $Z$ has length $2$ or $1$ in the two cases above. If the latter it takes the form $I_x$ and we are done. If the former we get an exact sequence
\beq{pmi}
0\To I_Z\To E\To\cO_S\To0
\eeq
with $Z$ of length $2$. Pick a point $y\in Z$ such that $\Hom(I_Z,I_y)=\C$. Since $H^1(I_y)=0$, the long exact sequence of $\Hom(\ \cdot\ ,I_y)$ applied to \eqref{pmi} shows that $\Hom(E,I_y)=\C$, and we can proceed as before.
\end{proof}

So we can now classify $\C^*$-fixed stable Joyce-Song pairs with underlying semistable sheaf $\cE$ in class $\alpha=(2,2)$.
\begin{itemize}
\item $\cE=\iota_*(I_x\oplus I_y)$ with $x\ne y\in S$. The pairs moduli space is a $\PP(H^0(I_x(n)))\times \PP(H^0(I_y(n)))$-bundle over $(S\times S\take\Delta_S)\big/\Z/2$ with Euler characteristic
$$
\chi\!\left(\frac\alpha2(n)\right)^{\!2\,}\frac{e(S)^2-e(S)}2\,.
$$
\item $\cE=\iota_*(I_x\oplus I_x)$ with $x\in S$. The moduli space of Joyce-Song pairs is a $\Gr(2,H^0(I_x(n)))$-bundle over $S$ with Euler characteristic
$$
\frac12\,\chi\!\left(\frac\alpha2(n)\right)\left(\chi\!\left(\frac\alpha2(n)\right)-1\right)e(S).
$$
\item $\cE$ is a \emph{nontrivial extension} between $I_x$ and itself, $x\in S$, classified by a point of
$$
\PP(\Ext^1(I_x,I_x))^{\C^*}\ \cong\ \PP(T_xX)^{\C^*}\ =\ \PP(T_xS)\sqcup S,
$$
where the very last term corresponds to the vertical $K_S$ direction in $T_xX$. Then the pairs space is a $\PP(H^0(\cE(n))\take H^0(I_x(n)))\big/\C$-bundle over $\PP(TS)\sqcup S$ with Euler characteristic
$$
\left(\chi(\alpha(n))-\chi\!\left(\frac\alpha2(n)\right)\right)3e(S).
$$
\end{itemize}
Adding it all up and using Remark \ref{kaistack} gives
$$
P_{2,2}(n)\ =\ \frac54\chi(\alpha(n))e(S)+\frac12\chi\!\left(\frac\alpha2(n)\right)^{\!2}e(S)^2.
$$
Using
$$
\vw_{\alpha/2}(S)\ =\ \vw_{1,1}(S)\ =\ e(S)\ =\ 24,
$$
and comparing to \eqref{peng}
gives $\vw_{2,2}(S)=\frac54e(S)=30$, as required by modularity. \bigskip

\noindent\textbf{All} $\mathbf{c_2.}$
For the general case we use the following conjecture of Toda \cite{To1}, now proved in \cite{MT}:
\beq{toda}
JS_{\alpha}(X)\ =\ -\sum_{k\ge1,\ k|\alpha}\frac 1{k^2}
e\big(\!\Hilb^{1-\frac12\chi\_S\!\big(\!\frac{\alpha}k,\frac\alpha k\!\big)}\!S\big).
\eeq
Here $\alpha$ is any class in $H^*(S,\Z)$ and $X=S\times\C$ as usual, while $\chi\_S$ is the Mukai pairing \emph{on $S$} instead of $X$ (this is minus the pairing Toda uses). We have added a sign to Toda's formula because he uses bare Euler characteristics. He hints at the natural conjecture that $\chi^B$ should be $\pm1$ so that \eqref{toda} gives the correct virtual answer; this was proved in rank 2 in Proposition \ref{-1}, and in general in \cite{MT}.

In particular for $\alpha=(2,2k)$ we get
\beq{2k}
\vw_{2,2k}(S)\ =\ e(\Hilb^{4k-3}S)+\frac14e(\Hilb^kS),
\eeq
while we already know from \eqref{hil} that
\beq{2k1}
\vw_{2,2k+1}(S)\ =\ e(\Hilb^{4k-1}S).
\eeq
So the generating series is
$$
\frac12q^2\big(\eta(q^{1/2})^{-24}+\eta(-q^{1/2})^{-24}\big)+\frac14\eta(q^2)^{-24},
$$
the last term coming from the last term in \eqref{2k}, and the first term coming from the sum of the two remaining terms in (\ref{2k}, \ref{2k1}). But this is precisely the Vafa-Witten prediction \eqref{pred}.\bigskip

\subsection{All rank and all $c_2$} \label{Kvw3}
Given the proof of Toda's conjecture \eqref{toda} in \cite{MT} the analysis in higher rank $r$ is no harder.

\begin{thm} \label{Kvw2}
The generating series of rank $r$, trivial determinant, weighted Euler characteristic Vafa-Witten invariants equals
\beq{generalr}
\sum_{c_2}\vw_{r,c_2}q^{c_2}\ =\ \sum_{d|r}\frac{d}{r^2}q^r
\sum_{j=0}^{d-1}\eta\Big(e^{\frac{2\pi ij}d}q^{\frac r{d^2}}\Big)^{-24}.
\eeq
\end{thm}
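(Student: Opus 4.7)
The plan is to derive \eqref{generalr} from Toda's formula \eqref{toda}, proved in \cite{MT}, by an explicit computation using G\"ottsche's formula and a roots-of-unity extraction. Since $h^{0,1}(S) = 0$ on a K3 surface forces $\Pic^0(S) = 0$, every rank-$r$ sheaf on $S$ with $c_1 = 0$ automatically has trivial determinant, so the $SU(r)$ and $U(r)$ Joyce-Song invariants coincide: $JS^{\cO_S}_{(r, c_2)}(X) = JS_{(r, 0, c_2)}(X)$. With $h^0(K_S) = 1$, Definition \ref{last} gives $\vw_{r, c_2}(S) = -JS_{(r, 0, c_2)}(X)$. Applying \eqref{toda} with $\alpha = (r, 0, c_2)$ --- where divisibility $d\,|\,\alpha$ amounts to $d\,|\,\gcd(r, c_2)$ --- and computing by Hirzebruch--Riemann--Roch that $\chi_S(\alpha, \alpha) = 2r^2 - 2r c_2$, one obtains
$$
\vw_{r, c_2}(S) \;=\; \sum_{d \,|\, \gcd(r, c_2)} \frac{1}{d^2}\, e\!\left(\Hilb^{\,1 - (r/d)^2 + r c_2/d^2}\, S\right).
$$

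To assemble the generating series, first substitute $c_2 = dm$ with $m \ge 0$, then reindex the divisor sum via $e := r/d$ (which also ranges over divisors of $r$). The change of variable $n := 1 - e^2 + em$ simultaneously rewrites the Hilbert scheme index and, since $q^{c_2} = q^{(r/e)m} = q^r\, q^{r(n-1)/e^2}$, the $q$-exponent, yielding
$$
\sum_{c_2} \vw_{r, c_2}\, q^{c_2} \;=\; \sum_{e \,|\, r} \frac{e^2}{r^2}\, q^{r} \sum_{\substack{n \ge 0 \\ n \equiv 1\,(\mathrm{mod}\, e)}} e(\Hilb^n S)\, q^{r(n-1)/e^2}.
$$
G\"ottsche's formula reads $\eta(q)^{-24} = \sum_{n \ge 0} e(\Hilb^n S)\, q^{n-1}$ on a K3, and the arithmetic progression $n \equiv 1 \pmod{e}$ is cut out by the orthogonality relation for the characters of $\Z/e\Z$:
$$
\sum_{\substack{n \ge 0 \\ n \equiv 1\,(\mathrm{mod}\, e)}} e(\Hilb^n S)\, q^{r(n-1)/e^2} \;=\; \frac{1}{e}\sum_{j=0}^{e-1} \eta\!\left(e^{2\pi ij/e}\, q^{r/e^2}\right)^{-24}.
$$
Substituting back and collecting the prefactor $\frac{e^2}{r^2}\cdot\frac{1}{e} = \frac{e}{r^2}$ (then relabelling $e$ as $d$) recovers \eqref{generalr}.

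The essential obstacle --- and the only nontrivial input imported from \cite{MT} --- is Toda's conjecture \eqref{toda} itself. In arbitrary rank $r$ it requires the Behrend-function identity $\chi^B \equiv \pm 1$ on the $\C^*$-fixed locus of the stack of all semistable Higgs pairs, extending Proposition \ref{-1} beyond $r = 2$ via the more delicate local analysis of strictly semistable loci carried out in \cite{MT}. Granting \eqref{toda}, everything else is the elementary manipulation above.
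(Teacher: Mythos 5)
Your proposal is correct and follows essentially the same route as the paper: apply Toda's formula \eqref{toda}, reindex the divisor sum (your substitution $e=r/d$, $n=1-e^2+em$ is the paper's shift of $m$ by $r/d$ followed by swapping $d\leftrightarrow r/d$), and finish with G\"ottsche's formula and the roots-of-unity extraction. Your extra remarks --- that $\Pic^0(S)=0$ identifies the fixed-determinant and unrestricted Joyce--Song invariants, and the explicit sign bookkeeping via $(-1)^{h^0(K_S)}$ --- are correct details the paper leaves implicit.
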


\begin{proof}
By \eqref{toda} we find
$$
\sum_n\vw_{r,n}(S)q^n\ =\ \sum_{d|r}\frac1{d^2}\sum_{m\in\Z}
e\Big(\Hilb^{\frac rd\big(m-\frac rd\big)+1}S\Big)q^{md},
$$
where on the right we have summed over those $n=md$ divisible by $d$. Shifting $m$ by the integer $r/d$ and then swapping the roles of $d$ and $r/d$ shows the generating series is
\beq{temp}
\sum_{d|r}\frac{d^2}{r^2}\sum_{m\in\Z}
e\Big(\!\Hilb^{dm+1}S\Big)q^{\frac{mr}d+r}.
\eeq
To sum this we rewrite G\"ottsche's formula as
$$
\sum_n e(\Hilb^{n+1}S)\;q^n\ =\ \eta(q)^{-24}
$$
and take only powers of $q$ divisible by $d$ on both sides to give
$$
\sum_m e(\Hilb^{md+1}S)q^{md}\ =\ \frac1d\sum_{j=0}^{d-1}\eta\Big(e^{\frac{2\pi ij}d}q\Big)^{-24}.
$$
Substituting in \eqref{temp} we find the generating series is
\[
\sum_{d|r}\frac{d}{r^2}q^r
\sum_{j=0}^{d-1}\eta\Big(e^{\frac{2\pi ij}d}q^{\frac r{d^2}}\Big)^{-24}. \qedhere
\]
\end{proof}

When $r$ is prime, so that $d$ takes only the values $1$ and $r$, this becomes
$$
-\frac1{r^2}q^r\eta(q^r)^{-24}-\frac1rq^r\sum_{j=0}^{r-1}\eta\Big(e^{\frac{2\pi ij}r}q^{1/r}\Big)^{-24}.
$$
In \cite[End of Section 4.1]{VW} Vafa and Witten made precisely this prediction, and asked for the extension to more general $r$, which is what \eqref{generalr} gives. Martijn Kool pointed out to us that \eqref{generalr} proves a physics conjecture from \cite[Equation 3.7]{MNVW}.

\section{Extension of $SU(r)\ \VW$ invariant to the semistable case}
\label{virtpairs}
Finally we describe the virtual localisation version $\VW_\alpha$ of the $SU(r)$ Vafa-Witten invariant $\vw_\alpha$ described in Section \ref{modif}.

Fix $n\gg0,\,r>0,\,c_1,\,c_2$ and $L\in\Pic_{c_1}(S)$, and recall the notion of a Joyce-Song pair $(\cE,s)$ from Section \ref{jsP}. So for us, $\cE$ is a pure dimension 2 Gieseker semistable sheaf on $X=K_S$ whose pushdown $E=\pi_{*\;}\cE$ has rank $r$ and Chern classes $c_1,\,c_2$. (Equivalently it is a semistable Higgs pair $(E,\phi)$ on $S$.) Then $s$ is a section of $\cE(n)$ which does not factor through any destabilising subsheaf.
Let
$$
\cP_{r,L,c_2}^\perp\ \subset\ \cP_{r,c_1,c_2}
$$
denote the moduli space of Joyce-Song pairs with $\det E\cong L$ and $\tr\phi=0$.

In \cite[Chapter 12]{JS} Joyce-Song construct a symmetric perfect obstruction theory on $\cP_{r,c_1,c_2}$ which has tangent-obstruction complex
\beq{JSdef}
R\Hom\_X(I\udot,I\udot)\_0[1]
\eeq
at the point $I\udot:=\{\cO_X(-n)\rt{s}\cE\}$. This can be modified to give a symmetric perfect obstruction theory on $\cP^\perp_{r,L,c_2}\subset\cP_{r,c_1,c_2}$ by following \cite[Section 5]{TT1}.\footnote{In \cite[Section 5]{TT1} we modified the obstruction theory for $\cN$ by removing $H^1(\cO_S)\oplus H^2(\cO_S)[-1]\oplus H^0(K_S)\oplus H^1(K_S)[-1]$ from $\tau^{[0,1]}\big(R\Hom_X(\cE,\cE)\_0[1])$ to get an obstruction theory based on $R\Hom_X(\cE,\cE)\_\perp[1]$ for $\cN^\perp_L$.} That is, removing $H^1(\cO_S)\oplus H^2(\cO_S)[-1]\oplus H^0(K_S)\oplus H^1(K_S)[-1]$ from \eqref{JSdef}
gives (the shift by $[1]$ of) the first term of the decomposition
\begin{align} \nonumber
R\Hom\_X(I\udot,I\udot)\ \cong\ &R\Hom\_X(I\udot,I\udot)\_\perp \\ &\qquad\oplus H^*(\cO_X)\oplus H^{\ge1}(\cO_S)\oplus H^{\le1}(K_S)[-1]. \label{splits}
\end{align}
On the second line we have removed the deformation-obstruction theory of $\det(I\udot)\cong\cO_X$, of $\det\pi_{*\;}\cE\cong\cO_S$ and of $\tr\phi\in\Gamma(K_S)$ respectively.
Doing this in a family, shifting by $[1]$ and dualising gives a perfect symmetric obstruction theory on $\cP^\perp_{r,L,c_2}$. 
Full details will appear in a future paper.

\begin{rmk} \normalfont
We sketch a quicker route for those familiar with derived algebraic geometry. Consider the derived stack \cite{TVa} $\curly M_S$ of all torsion-free sheaves on $S$, with determinant map $\curly M_S\to\!\curly J\!ac(S)$ \cite{STV} to the derived stack of line bundles. $(\curly J\!ac(S)$ is a derived stack with underlying scheme the usual Jacobian, stabiliser group $\C^*$ at every point, and obstruction bundle $H^2(\cO_S).)$ The fibre $\curly M^L_S$ over $L\in\!\!\curly J\!ac(S)$ inherits a derived stack structure. Over this we form the stack $\curly P^L_S$ of pairs $(E,s)$, where $E\in\curly M^L_S$ and $s\in H^0(E(n))$. The forgetful map $\curly P^L_S\to\curly M_S^L$ is smooth with fibre $H^0(E(n))$ over the open locus of sheaves with $H^{\ge1}(E(n))=0$, so inherits a natural derived stack structure from $\curly M_S^L$. This induces a derived stack structure on (the corresponding open locus of) its $(-1)$-shifted cotangent bundle $T^*[-1]\curly P^L_S$. This is a moduli stack of triples $(E,s,\phi)$ where $s\in H^0(E(n))$ and $\phi\in\Ext^2(E,E)_0^*\cong\Hom(E,E\otimes K_S)\_0$. Finally, $\cP^\perp_X\subset T^*[-1]\curly P^L_S$ is the open substack of \emph{stable} triples. These have trivial stabiliser groups, vanishing $H^1(E(n))$ and so an induced derived structure which is $(-1)$-symplectic and quasi-smooth. In particular it inherits the symmetric perfect obstruction theory that we are after.
\end{rmk}

So we can now apply virtual localisation to the fixed locus of the $\C^*$ action scaling the $K_S$ fibres of $X\to S$ (equivalently scaling the Higgs fields $\phi$) to define invariants
\beq{bowl}
P^\perp_{r,L,k}(n)\ :=\ \int_{\big[(\cP^\perp_{r,L,c_2})^{\C^*}\big]^{\vir\ }}\frac1{e(N^{\vir})}\,.
\eeq
We speculate that these satisfy similar\footnote{Similar, but nonetheless genuinely different when $h^{0,2}(S)>0$.} identities to the Joyce-Song invariants $P_{r,c_1,k}(n),\ P_{r,L,k}(n)$ defined by Kai localisation in Sections \ref{jsP} and \ref{jsP2}, with $\vw$ replaced by $\VW$. We use the notation \eqref{alph} from Section \ref{hall}, and a generic polarisation $\cO_S(1)$ as in \eqref{Lgen}.

\begin{conj} \label{conj}
If $H^{0,1}(S)=0=H^{0,2}(S)$ there exist rational numbers $\VW_{\alpha_i}(S)$ such such that
\beq{long}
P^\perp_{\alpha}(n)\ =\ \mathop{\sum_{\ell\ge 1,\,(\alpha_i=\delta_i\alpha)_{i=1}^\ell:}}_{\delta_i>0,\ \sum_{i=1}^\ell\delta_i=1}
\frac{(-1)^\ell}{\ell!}\prod_{i=1}^\ell(-1)^{\chi(\alpha_i(n))} \chi(\alpha_i(n))\;\VW_{\alpha_i}(S)
\eeq
for $n\gg0$.
When either of $H^{0,1}(S)$ or $H^{0,2}(S)$ is nonzero we take only the first term in the sum:
\beq{shorter}
P^\perp_{r,L,c_2}(n)\ =\ (-1)^{\chi(\alpha(n))-1}\chi(\alpha(n))\VW_{r,L,c_2}(S).
\eeq
\end{conj}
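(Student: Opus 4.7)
The plan is to transpose the proof of Proposition \ref{pengest} from Joyce--Song's weighted-Euler-characteristic framework into the virtual setting, replacing the integration map $\tilde\Psi^{\cB_{p_\alpha}}$ (Kai/Behrend localisation) with a virtual analogue built from the symmetric perfect obstruction theory on $\cP^\perp_\alpha$ and its strata. Concretely, the perfect obstruction theory \eqref{splits} must be extended compatibly to the moduli stack of all (not-necessarily stable) Joyce--Song pairs, together with its Harder--Narasimhan and Jordan--H\"older stratifications, so that each stratum carries a $\C^*$-equivariant virtual class whose integral against $1/e(N^{\vir})$ makes sense.

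First I would construct a virtual integration map $\Psi^{\mathrm{vir}}$ on the sub-algebra of $\C^*$-equivariant virtually indecomposable stack functions in $\bar{\mathrm{SF}}^{\mathrm{ind}}_{\mathrm{al}}(\mathrm{Coh}_c(X),e,\Q)^{\C^*}$, satisfying the Joyce--Song identities. In particular I would need $\Psi^{\mathrm{vir}}$ to intertwine the Hall algebra product of indicator functions of $\C^*$-equivariant moduli stacks of Gieseker-semistable sheaves with a corresponding operation on virtual cycles. This is the analogue, for virtual invariants, of Joyce--Song's Theorem 5.27, and ought to follow (after rigidifying by the $B\C^*$ factor as in \eqref{epsZ}--\eqref{JS}) from the wall-crossing framework developed by Joyce and by Gross--Joyce--Tanaka. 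Granting such a virtual integration map, the argument of \cite[Section 13]{JS} applied verbatim to the fixed-determinant pair stack function $\bar\epsilon^L_{(\alpha,1)}$ would yield \eqref{long} when $H^{0,1}(S)=0=H^{0,2}(S)$.

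To obtain \eqref{shorter} when $H^{0,1}(S)>0$ or $H^{0,2}(S)>0$, the argument of Proposition \ref{pengest} should carry over essentially unchanged. In the first case, the $\Jac(S)$-action on $\mathrm{Coh}_c(X)$ by tensoring lifts to the moduli stack of pairs and preserves the virtual class, both being intrinsic to the stack. All $\ell\ge 2$ terms then descend, via the determinant map, to $\Jac(S)$-equivariant virtual classes over a positive-dimensional torus base, whose virtual pushforward to a point vanishes for dimensional/equivariant reasons (the analogue of having vanishing Euler characteristic). In the second case the analogous role is played by $H^0(K_S)$ translation of $X=K_S$ covering the identity on $S$; this translates the centre of mass of supports, acts freely on sheaves with $\tr\phi\ne 0$, and produces the same equivariant vanishing of higher terms.

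The hard part will be the very first step: rigorously setting up a virtual integration map compatible with Joyce's Hall-algebra framework. Joyce--Song's original machine works because the Behrend function is motivic --- it obeys scissor relations and pulls back under smooth maps --- and virtual fundamental classes do not enjoy these properties automatically. Recent work of Joyce (using vertex algebras on the homology of moduli stacks) and of Gross--Joyce--Tanaka on wall-crossing for pair-like invariants does supply the required structure, but their hypotheses assume a suitable global perfect obstruction theory on the total moduli stack together with a Calabi--Yau structure; verifying these in our noncompact $X=K_S$ setting with $\C^*$-equivariance, and matching their wall-crossing formula with \eqref{long}, is the principal obstacle and explains why the result is left as a conjecture in general, with the proven cases (Theorems \ref{mt} and onward) corresponding to situations where $\vw=\VW$ and Joyce--Song's Kai-theoretic formula can be transferred directly.
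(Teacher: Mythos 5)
Be careful about what you are being asked to prove: the statement is Conjecture \ref{conj}, and the paper itself offers no general proof --- it establishes the formula only in special cases (Proposition \ref{stabletrue} when stability equals semistability, Theorem \ref{Kneg} when $\deg K_S<0$, and the K3 case via \cite{MT}). Judged as a proof, your proposal has a genuine gap, and it is the one you yourself flag: everything is deferred to the existence of a ``virtual integration map'' $\Psi^{\mathrm{vir}}$ on $\C^*$-equivariant virtually indecomposable stack functions satisfying the Joyce--Song multiplicative identities. No such map is constructed, and its existence with those properties is essentially equivalent to the conjecture. The reason Joyce--Song's machine works is precisely that the Behrend function is constructible and motivic (scissor relations, smooth pullback), properties that virtual classes on noncompact $\C^*$-localised moduli do not enjoy; so ``applying \cite[Section 13]{JS} verbatim'' is not available once $\tilde\Psi^{\cB_{p_\alpha}}$ is replaced. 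Note also that the paper's own suggested route to the general case is different from yours: an extension of the Kiem--Li virtual wall-crossing formula \cite{KL2}, which works stratum-by-stratum with virtual cycles rather than through a Hall-algebra integration map. Your heuristics for \eqref{shorter} are broadly consistent with the paper's (the $\Jac(S)$-action giving a nowhere-zero vector field, hence a cosection by symmetry of the obstruction theory), though for $h^{0,2}(S)>0$ the paper argues via a trivial summand in the trace-free obstruction spaces over the decomposable locus rather than via a translation action; these are cosection-type heuristics, not proofs, in both treatments.

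It is also worth recording that the cases the paper \emph{does} prove are handled by concrete geometric arguments that bypass any integration map entirely, and your proposal does not recover them. In Proposition \ref{stabletrue} the pairs space is an explicit $\PP^{\chi(\alpha(n))-1}$-bundle over $\cN^\perp_\alpha$ and the computation is a direct virtual localisation on the fibres using \eqref{splits}; in Theorem \ref{Kneg} the pairs space is smooth and compact, so virtual and Behrend-weighted invariants agree and the result is imported from Proposition \ref{pengest} and \eqref{sum}; for K3 the identity comes from a degeneration of $S\times E$ in \cite{MT}. If you want to present something provable rather than a programme, you should restrict to these situations and run those arguments; as a proof of the conjecture in general, the proposal is a plausible but unexecuted strategy whose central construction remains open.
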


The formula \eqref{shorter} is of course reminiscent of the formula \eqref{be} for invariants defined by weighted Euler characteristic when $h^{0,1}(S)>0$, but is different for $h^{0,1}(S)=0<h^{2,0}(S)$. The motivation for dropping the other terms when $h^{0,1}(S)>0$ or $h^{0,2}(S)>0$ is that we think of them as enumerating the contributions of nontrivial direct sums of sheaves. When $h^{0,1}(S)>0$ these come in families with a nontrivial $\Jac(S)$ action, for instance with $M\in\Jac(S)$ acting on $\cE_1\oplus\cE_2$ by taking it to the sheaf $\cE_1\otimes M^{-r_2}\oplus\cE_2\otimes M^{r_1}$ with the same determinant. (Here $r_i=\rk(\cE_i)$.) This defines a nowhere zero vector field, and so cosection of the obstruction sheaf, over these semistable loci.  Similarly when $h^{0,2}(S)>0$ these loci inherit extra trivial pieces in their trace-free obstruction spaces. So in both cases we expect their virtual contribution to be zero.

Since \eqref{sum} is a wall crossing formula for invariants defined by weighted Euler characteristic it is natural to expect our conjecture to be proved by an extension of the different wall crossing formula of Kiem-Li \cite{KL2}. Their work uses virtual localisation instead of the Behrend function, so should fit naturally with $\VW_\alpha$. This should also be used to clarify the invariance (expected in physics) of $\VW_\alpha$ under changes in the polarisation $\cO_S(1)$. We intend to return to this in future work.

We start by proving these conjectures --- and showing they recover the invariants $\VW_\alpha\in\Z$ of \eqref{virtdef} --- when stability and semistability coincide.

\begin{prop}\label{stabletrue}
If all semistable sheaves in $\cN^\perp_{r,L,c_2}$ are stable then Conjecture \ref{conj} is true with $\VW_{r,L,c_2}\in\Q$ defined by \eqref{virtdef}.
\end{prop}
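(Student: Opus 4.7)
The plan is to exploit the fact that, when semistability coincides with stability, $\cP^\perp_{r,L,c_2}$ is a projective bundle over $\cN^\perp_{r,L,c_2}$ and to integrate out the projective fibre directions compatibly with $\C^*$-virtual localisation. Since every Gieseker semistable Higgs pair of class $\alpha$ is now stable with automorphism group $\C^*$, and $n\gg0$ ensures $H^{\ge 1}(\cE(n))=0$ for all such $\cE$, a stable Joyce-Song pair $(\cE,s)$ is determined up to isomorphism by $\cE$ together with $[s]\in\PP(H^0(\cE(n)))$. This exhibits
$$
\pi\colon\cP^\perp_{r,L,c_2}\To\cN^\perp_{r,L,c_2}
$$
as a Zariski-locally trivial $\PP^{N-1}$-bundle with $N:=\chi(\alpha(n))$. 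Moreover the genericity assumption \eqref{Lgen} combined with semistability\,=\,stability forbids any nontrivial decomposition $\alpha=\sum_i\alpha_i$ with $p_{\alpha_i}=p_\alpha$, so the right-hand side of \eqref{long} collapses to its $\ell=1$ term and we are reduced to proving \eqref{shorter}.

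Next I would establish compatibility of the two perfect obstruction theories. The distinguished triangle $\cE\to I\udot\to\cO_X(-n)[1]$ on the Calabi-Yau threefold $X=K_S$, together with the vanishings $H^{\ge 1}(\cE(n))=0$, yields a triangle
$$
\pi^*R\Hom(\cE,\cE)\_\perp[1]\ \To\ R\Hom(I\udot,I\udot)\_\perp[1]\ \To\ T\_{\cP^\perp/\cN^\perp}
$$
exhibiting the virtual tangent complex of $\cP^\perp$ as an extension of $\pi^*T^{\vir}_{\cN^\perp}$ by the relative (smooth) tangent bundle of the $\PP^{N-1}$-bundle, with Serre duality on $X$ pairing the two ends to produce a symmetric obstruction theory of virtual dimension zero. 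This is the $\perp$-modification of Joyce-Song's pair obstruction theory \eqref{JSdef} carried out in \cite[Section 5]{TT1} (as indicated around \eqref{splits}), applied here in the $\PP^{N-1}$-bundle setting.

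Finally I would apply $\C^*$-virtual localisation \cite{GP} on both sides. Proposition \ref{equi} endows each $\C^*$-fixed $\cE\in(\cN^\perp)^{\C^*}$ with an equivariant structure, so $H^0(\cE(n))=\bigoplus_w V_w\t^w$ decomposes into weight spaces and $(\cP^\perp)^{\C^*}$ is identified with the disjoint union $\bigsqcup_w\PP(V_w)$ of projective bundles over $(\cN^\perp)^{\C^*}$. The compatibility triangle above identifies the virtual normal bundle of $(\cP^\perp)^{\C^*}$ as the pullback of $N^{\vir}_{(\cN^\perp)^{\C^*}}$ combined with the $\C^*$-moving parts of the $\PP^{N-1}$-direction and its Serre-dual (carrying a $\t^{-1}$-twist from the $\C^*$-weight of the trivialisation $K_X\cong\cO_X$). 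Integrating this fibrewise and summing collapses the answer to $(-1)^{N-1}N$ times the integrand for $\VW_{r,L,c_2}$, yielding \eqref{shorter}. The main obstacle is this last equivariant fibre integration: one must verify that the sum over weights $w$, weighted by the inverse equivariant Euler class of the normal bundle to $\PP(V_w)\subset\PP(H^0(\cE(n)))$ and by its matching Serre-dual obstruction contribution, telescopes to the weight-independent constant $(-1)^{N-1}N$. This is the projectivised, symmetric $(-1)$-shifted symplectic analogue of the cancellation underlying Joyce-Song's Theorem 5.22 --- there carried out for weighted Euler characteristics --- and is where the genuine content of the proof lies.
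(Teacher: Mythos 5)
Your strategy is the same as the paper's: collapse \eqref{long} to its $\ell=1$ term, exhibit $\cP^\perp_\alpha\to\cN^\perp_\alpha$ as a $\PP^{\chi(\alpha(n))-1}$-bundle, compare the two symmetric obstruction theories, and integrate out the fibre directions equivariantly. But you have left open precisely the step you yourself identify as carrying the genuine content, namely the fibrewise equivariant integral, and without it the proof is not complete. The paper closes this gap with a short observation that avoids any weight-by-weight telescoping. Once one knows that in K-theory
$$
T^{\vir}_{\cP^\perp_\alpha}\ =\ p^*T^{\vir}_{\cN^\perp_\alpha}\ +\ T_{\cP^\perp_\alpha/\cN^\perp_\alpha}\ -\ T^*_{\cP^\perp_\alpha/\cN^\perp_\alpha}\otimes\t,
$$
the pushdown of the localisation integrand along $p$ reduces on each fibre $\PP=\PP^{\chi(\alpha(n))-1}$ to
$$
\int_{\PP^{\C^*}}\frac{e\big(T^*_{\PP}\otimes\t\big)}{e\big(N_{\PP^{\C^*}/\PP}\big)}\,,
$$
and this is recognised as the Atiyah--Bott localisation of the \emph{global} integral $e\big(T^*_{\PP}\otimes\t\big)\cap[\PP]$. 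Since $T^*_{\PP}\otimes\t$ has rank $\dim\PP$, that integral is the $t$-independent number $(-1)^{\chi(\alpha(n))-1}\chi(\alpha(n))$, independent of the particular weight decomposition of $H^0(\cE(n))$. Supplying this (or an equivalent direct summation over your strata $\PP(V_w)$) is required for the argument to go through.

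Two smaller points. First, your reduction to $\ell=1$ is misstated: \eqref{Lgen} together with semistability${}={}$stability does not forbid decompositions $\alpha=\sum_i\delta_i\alpha$ of the \emph{class}, since $\alpha$ may well be divisible. The paper argues by induction on rank: if an $\ell>1$ term were nonzero then each $\VW_{\alpha_i}$ would be nonzero, hence (by the inductive identification with \eqref{virtdef}) each $\cN^\perp_{\alpha_i}$ nonempty, and a direct sum of representatives would be a strictly semistable object of class $\alpha$ --- contradicting the hypothesis. Second, the triangle you write, $\pi^*R\Hom(\cE,\cE)\_\perp[1]\to R\Hom(I\udot,I\udot)\_\perp[1]\to T_{\cP^\perp/\cN^\perp}$, cannot be correct as stated: it would give $\cP^\perp_\alpha$ virtual dimension $\chi(\alpha(n))-1$ rather than $0$. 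The Serre-dual piece $T^*_{\cP^\perp_\alpha/\cN^\perp_\alpha}\otimes\t\,[-1]$ must appear as a third K-theoretic summand, which is exactly what the paper's diagrams \eqref{sick}--\eqref{kcsp} produce; your later description of the virtual normal bundle suggests you intend this, but the stated triangle contradicts it.
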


\begin{proof}
We sketch the proof using induction on the rank $r$ of $\alpha=(r,c_1,c_2)$. We first claim that if there are no strictly semistables in class $\alpha$ then only the first term contributes to the sum \eqref{long}. Indeed, if there was a nonzero contribution indexed by $\alpha_1,\ldots,\alpha_\ell$ with $\ell>1$ then the nonvanishing of the numbers $\VW_{\alpha_i}(S)$ (which equal the numbers \eqref{virtdef} by the induction hypothesis) would imply that the moduli spaces $\cN^\perp_{\alpha_i}$ are nonempty. Picking an element $\cE^i$ of each defines a strictly semistable $\cE:=\cE^1\oplus\cdots\oplus\cE^\ell$ of $\cN_\alpha^\perp$, a contradiction.

Let $\pi\colon X\times\cN_\alpha^\perp\to\cN_\alpha^\perp$ denote the projection, and let $\EE$ denote the (possibly twisted) universal sheaf. Since there are no strictly semistables, and all stable sheaves are simple,
the moduli space of pairs
\beq{txt}
\cP^\perp_{\alpha}\ =\ \PP(\pi_*\;\EE_n)\rt{p}\cN^\perp_\alpha
\eeq
is a $\PP^{\chi(\alpha(n))-1}$-bundle over $\cN^\perp_\alpha$. Here we have relabelled $\cE(n)$ as $\cE_n$ so that we can reuse the $\cO(n)$ notation to describe powers of the tautological bundle\footnote{This is a twisted bundle if and only if $\cE$ is twisted. The twistings cancel in $p^*\EE_n(1)$.} $\cO_{\PP(\pi_*\;\EE_n)}(-1)\into p^*\pi_*\;\EE_n$ carried by the projective bundle. By adjunction this defines the universal section
\beq{twisting}
\cO_{X\times\PP(\pi_*\;\EE_n)}\To p^*\EE_n\otimes\cO_{\PP(\pi_*\;\EE_n)}(1).
\eeq
Notice that $p^*\EE_n(1)$ is an \emph{untwisted} sheaf.
We let $I\udot$ denote the 2-term complex made out of this universal stable pair, with $\cO$ in degree 0. It is naturally $\C^*$-equivariant for the $\C^*$ action scaling the fibres of $X=K_S\to S$. We have the commutative diagram
$$
\xymatrix@=18pt{
R\hom(\cO,I\udot) \ar[d]\ar[r]& R\hom(I\udot,I\udot) \ar[dl]^{\tr}\ar[r]& R\hom(p^*\EE_n(1),I\udot)[1] \\
R\hom(\cO,\cO),}
$$
where the horizontal row is an exact triangle. Taking cones of the two downward arrows gives
$$
\xymatrix@=15pt{
R\hom(\cO,p^*\EE_n(1))[-1] \ar[r]& R\hom(I\udot,I\udot)\_0 \ar[r]& R\hom(p^*\EE_n,I\udot)[1].}
$$
Letting $\pi$ denote both projections $X\times\cP^\perp_\alpha\to\cP^\perp_\alpha$ and $X\times\cN^\perp_\alpha\to \cN^\perp_\alpha$ down $X$, as usual, we now apply $R\pi_*$. Since $n\gg0$ the first term simplifies, while the exact triangle $p^*\EE_n(1)[-1]\to I\udot\to\cO$ means the last term fits into the vertical exact triangle of the following diagram.
\beq{sick}
\xymatrix@=15pt{
&& p^*R\hom_\pi(\EE_n,\EE_n) \ar[d] \\
\pi_*p^*\EE_n(1)[-1] \ar[r]& R\hom_\pi(I\udot,I\udot)\_0 \ar[r]& R\hom_\pi(p^*\EE_n,I\udot)[1] \ar[d] \\
&& \big(\pi_*p^*\EE_n(1)\big)^\vee\otimes\t\;[-2].}
\eeq
We have used $\C^*$ equivariant Serre duality on the last term.
By stability, $p^*R^0\pi_*\;\hom(\EE_n,\EE_n)\cong\cO$ in the top right hand corner is generated by the identity, which in the above diagram maps down and right to $p^*R^0\pi_*\;\EE_n(1)$. By the relative Euler sequence, the quotient is $T_{\cP^\perp_\alpha/\cN^\perp_\alpha}$.
Its Serre dual is (the twist by $\t$ of) the connecting homomorphism $R^3\pi_*p^*R\hom(\EE_n(1),\cO)\to R^3\pi_*p^*R\hom(\EE_n,\EE_n)$ of the vertical exact triangle.
So removing these two copies of $\cO$ and $\cO\otimes\t$ gives the diagram of exact triangles
$$
\xymatrix@=15pt{
&& p^*\tau^{[1,2]}R\hom_\pi(\EE_n,\EE_n) \ar[d] \\
T_{\cP^\perp_\alpha/\cN^\perp_\alpha}[-1] \ar[r]& R\hom_\pi(I\udot,I\udot)\_0 \ar[r]& Q_1 \ar[d] \\
&& T^*_{\cP^\perp_\alpha/\cN^\perp_\alpha}\otimes\t\,[-2],}
$$
for some $Q_1$. In \cite{TT1} we construct a (split) map from $H^{\ge1}(\cO_S)\otimes\cO\ \oplus\ H^{\le1}(K_S)\;\t[-1]\otimes\cO$ to the top right hand term. There is a similar map \eqref{splits} to $R\hom_\pi(I\udot,I\udot)\_0$. These two commute with the diagram; taking cones gives
\beq{kcsp}
\xymatrix@C=2pt@R=12pt{
&&&& R\hom_\pi(p^*\EE,p^*\EE)\_\perp \ar[d] \\
T_{\cP^\perp_\alpha/\cN^\perp_\alpha}[-1] \ar[rrr]&&& R\hom_\pi(I\udot,I\udot)\_\perp \ar[r]& Q_2 \ar[d] \\
&&&& T^*_{\cP^\perp_\alpha/\cN^\perp_\alpha}\otimes\t[-2].}
\eeq
In particular, taking weight 0 parts over $(\cP^\perp_\alpha)^{\C^*}$ we
see the K-theory class of its virtual tangent bundle $R\hom_\pi(I\udot,I\udot)^{\mathrm{fix}}_\perp[1]$ is
\beq{ties}
p^*R\hom_\pi(\EE,\EE)^{\mathrm{fix}}_\perp[1]\ +\ 
T_{(\cP^\perp_\alpha)^{\C^*}/(\cN^\perp_\alpha)^{\C^*}}\ -\ 
\big(T^*_{\cP^\perp_\alpha/\cN^\perp_\alpha}\otimes\t\big)^{\mathrm{fix}},
\eeq
and the virtual normal bundle is
\beq{ties2}
N^{\vir}_{(\cP^\perp_\alpha)^{\C^*}}\ =\ p^*N^{\vir}_{(\cN^\perp_\alpha)^{\C^*}}\ +\ N_{(\cP^\perp_\alpha)^{\C^*}\!\big/\cP^\perp_\alpha|_{(\cN^\perp_\alpha)^{\C^*}}} -\ \big(T^*_{\cP^\perp_{\alpha}/\cN^\perp_\alpha}\otimes\t\big)^{\mathrm{mov}}.
\eeq
Now \eqref{ties} expresses the perfect obstruction theory of $(\cP^\perp_\alpha)^{\C^*}$ \cite{GP} in terms of the one pulled back from $\cN^\perp_\alpha$, the smooth bundle structure $p\colon\cP^\perp_\alpha\to\cN^\perp_\alpha$, and the extra final term which (by this smooth bundle structure) is a vector bundle. It follows that
$$
\big[(\cP^\perp_\alpha)^{\C^*}\big]^{\vir}\ =\ p^*\big[(\cN^\perp_\alpha)^{\C^*}\big]^{\vir}\cap e\big(\big(T^*_{\cP^\perp_{\alpha}/\cN^\perp_\alpha}\otimes\t\big)^{\mathrm{fix}}\big).
$$
Therefore
$$
P^\perp_\alpha(n)\ =\ \int_{p^*\big[(\cN^\perp_\alpha)^{\C^*}\big]^{\vir}}\frac{e\big(\big(T^*_{\cP^\perp_{\alpha}/\cN^\perp_\alpha}\otimes\t\big)^{\mathrm{fix}}\big)}{e\big(N^{\vir}_{(\cP^\perp_\alpha)^{\C^*}}\big)}\,,
$$
which by \eqref{ties2} equals
\beq{work}
\int_{p^*\big[(\cN^\perp_\alpha)^{\C^*}\big]^{\vir}}\frac{e\big(T^*_{\cP^\perp_{\alpha}/\cN^\perp_\alpha}\otimes\t\big)}{e\big(N_{(\cP^\perp_\alpha)^{\C^*}\!/\cP^\perp_\alpha|_{(\cN^\perp_\alpha)^{\C^*}}}\big)}\,p^*\!\left(\frac{1}{e\big(N^{\vir}_{(\cN^\perp_\alpha)^{\C^*}}\big)}\right).
\eeq
We integrate by first pushing down the smooth map $p\colon\big(\cP^\perp_\alpha\big)^{\C^*}\to\big(\cN^\perp_\alpha\big)^{\C^*}$. On each fibre we get
\beq{onefib}
\int_{\PP^{\C^*}}\frac{e\big(T^*_{\PP}\otimes\t\big)}{e(N_{\PP^{\C^*}/\PP})}\,,
\eeq
where $\PP=\PP^{\chi(\alpha(n))-1}$ is acted on by $\C^*$ with fixed locus $\PP^{\C^*}$. We recognise \eqref{onefib} as the computation of $e\big(T^*_{\PP}\otimes\t\big)\cap[\PP]$ by localisation to $\PP^T$; it therefore yields $(-1)^{\chi(\alpha(n))-1}\chi(\alpha(n))$ and \eqref{work} becomes
\begin{align*}
P^\perp_\alpha(n)\ &=\ (-1)^{\chi(\alpha(n))-1}\chi(\alpha(n))
\int_{[(\cN^\perp_\alpha)^{\C^*}]^{\vir}}\frac{1}{e(N^{\vir}_{(\cN^\perp_\alpha)^{\C^*}})} \\
&=\ (-1)^{\chi(\alpha(n))-1}\chi(\alpha(n))\VW_{\alpha}(S).\qedhere
\end{align*}
\end{proof}


\subsection{\for{toc}{$K_S<0$}\except{toc}{$\mathbf{K_S<0}$}}
We can also prove the conjecture when $\deg K_S<0$. While there may be strictly semistables, we find the space of Joyce-Song pairs is still smooth.

\begin{thm}\label{Kneg}
Suppose $\deg K_S<0$. Then Conjecture \ref{conj} is true and $\VW_\alpha=\vw_\alpha$.
\end{thm}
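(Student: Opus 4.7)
The plan is to exploit the fact that $\deg K_S<0$ collapses the whole moduli problem to one on $S$ with no Higgs field, at which point the virtual theory and the Behrend-weighted theory coincide almost tautologically.

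First I would establish the vanishing of Higgs fields. If $(E,\phi)$ is a semistable Higgs pair then a nonzero $\phi\colon E\to E\otimes K_S$ would have image simultaneously a quotient of the semistable $E$ (slope $\ge\mu(E)$) and a subsheaf of the semistable $E\otimes K_S$ (slope $\le\mu(E)+\deg K_S/r<\mu(E)$), a contradiction. Serre duality then gives $\Ext^2_S(E,E)=\Hom(E,E\otimes K_S)^\vee=0$ for every semistable $E$, and the same slope argument applied to $\cO_S$ forces $H^0(K_S)=0=H^{0,2}(S)$. Consequently $\cP^\perp_{r,L,c_2}$ equals its entire $\C^*$-fixed locus and parameterises stable Joyce-Song pairs $(E,s)$ on $S$ with $\det E\cong L$ (the constraint $\tr\phi=0$ being automatic). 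The Joyce-Song obstruction $\Ext^2_S(I\udot_S,I\udot_S)_0$ is controlled by $\Ext^2_S(E,E)_0$ together with $H^1(E(n))$, both of which vanish for $n\gg0$, so $\cP^\perp_{r,L,c_2}$ is smooth of its expected dimension, call it $d$.

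Next I would decompose the symmetric perfect obstruction theory on $\cP^\perp_{r,L,c_2}$ by $\C^*$-weight, following the template of Proposition \ref{stabletrue}. Using $\cE=\iota_*E$ and the spectral identity
$$
R\Hom_X(\iota_*E,\iota_*F)\ \cong\ R\Hom_S(E,F)\oplus R\Hom_S(E,F\otimes K_S)[-1]\otimes\t,
$$
the weight-zero piece of $R\Hom_X(I\udot,I\udot)_\perp[1]$ recovers the ordinary Joyce-Song obstruction theory on $S$ with fixed determinant, which by smoothness contributes only $T_{\cP^\perp_{r,L,c_2}}$ in degree $1$. The Calabi-Yau self-duality of the full obstruction theory, as used in the derivation of \eqref{kcsp}, then forces the weight-$(+1)$ tangent to vanish and the weight-$(-1)$ obstruction to equal $T^\vee_{\cP^\perp_{r,L,c_2}}\otimes\t^{-1}$, so in $\C^*$-equivariant $K$-theory
$$
N^{\vir}_{(\cP^\perp_{r,L,c_2})^{\C^*}/\cP^\perp_{r,L,c_2}}\ =\ -\,T^\vee_{\cP^\perp_{r,L,c_2}}\otimes\t^{-1}.
$$

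Virtual localisation then gives
$$
P^\perp_{r,L,c_2}(n)\ =\ \int_{\cP^\perp_{r,L,c_2}}e\big(T^\vee_{\cP^\perp_{r,L,c_2}}\otimes\t^{-1}\big)\ =\ (-1)^d\,e\big(\cP^\perp_{r,L,c_2}\big),
$$
since only the top Chern class $c_d(T^\vee)=(-1)^dc_d(T)$ integrates nontrivially against the smooth fundamental class. The Behrend function of a smooth scheme is the constant $(-1)^d$, so the right hand side is precisely the Kai-weighted Euler characteristic $P_{r,L,c_2}(n)$ of \eqref{plainP}. Applying the Joyce-Song wall crossing formula \eqref{munch} when $h^{0,1}(S)=0$, or Proposition \ref{pengest} when $h^{0,1}(S)>0$, and using $h^0(K_S)=0$ to kill the sign $(-1)^{h^0(K_S)}$, transfers these identities to $P^\perp_{r,L,c_2}(n)$ and gives exactly \eqref{long} (respectively \eqref{shorter}) with $\VW_\alpha=\vw_\alpha$. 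The delicate step is the second paragraph — carrying out the $\C^*$-weight decomposition of the perfect obstruction theory and verifying the self-duality of its moving pieces in the presence of strictly semistables — but this is structurally the same calculation as for \eqref{kcsp}, considerably simplified here because $\cP^\perp_{r,L,c_2}$ is itself smooth and entirely $\C^*$-fixed.
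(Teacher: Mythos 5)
Your proposal is essentially the paper's own proof: vanishing of the Higgs field and of $\Ext^2_S(E,E)$ and $H^0(K_S)$ for $\deg K_S<0$, smoothness of the entirely $\C^*$-fixed pairs space $\cP^\perp$ from the absence of fixed obstructions, identification of the virtual normal bundle with the (co)tangent bundle twisted by a nontrivial character, the resulting equality $P^\perp_\alpha(n)=(-1)^{\dim\cP^\perp}e(\cP^\perp)=P_\alpha(n)$, and the observation that $h^{0,2}(S)=0$ makes the two wall-crossing formulae agree. One small correction: by your own spectral identity the moving part of the obstruction carries weight $+1$, so $N^{\vir}=T^*_{\cP^\perp}\otimes\t\,[-1]$ rather than $-T^\vee_{\cP^\perp}\otimes\t^{-1}$ (and likewise your destabilising slope bound should read $\mu(E)+\deg K_S$); neither slip affects the integral, since only $c_{\dim}(T^*_{\cP^\perp})$ survives.
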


\begin{proof}
By \cite[Proposition 7.6]{TT1} the semistable Higgs pairs are all of the form $(E,0)$ for some semistable sheaf $E$ on $S$ with
\beq{smoo}
\Hom_S(E,E\otimes K_S)\ =\ 0\ =\ \Ext^2_S(E,E),
\eeq
by semistability and Serre duality respectively. We work with the corresponding sheaf $\cE=\iota_*E$ on $X$. At the level of universal sheaves $\E$ (on $S\times\cN^\perp_\alpha$) and $\EE$ (on $X\times\cN^\perp_\alpha$) we have $\EE=\iota_*\E$, which is entirely $\C^*$-fixed. Since $L\iota^*\iota_*\E\cong\E\,\oplus\,\E\otimes K_S^{-1}\t^{-1}[1]$ we have
\beq{stf}
R\hom(\EE,\EE)\ \cong\ \iota_*R\hom(\E,\E)\oplus\iota_*R\hom(\E,\E\otimes K_S)\;\t\;[-1].
\eeq
Taking weight 0 parts of \eqref{sick} therefore gives the exact triangle
$$
\xymatrix@=15pt{
\pi_*p^*\EE_n(1)[-1] \ar[r]& R\hom_\pi(I\udot,I\udot)^{\mathrm{fix}}_0 \ar[r]& p^*R\hom_\pi(\E,\E)}.
$$
Removing $H^{\ge1}(\cO_S)\otimes\cO$ and taking the long exact sequence of cohomology sheaves gives both
$$
\xymatrix@=15pt{
0\ar[r]& \frac{\pi_*p^*\E_n(1)}{p^*\hom_\pi(\E,\E)} \ar[r]& \ext^1_\pi(I\udot,I\udot)^{\mathrm{fix}}_\perp \ar[r]& p^*\ext^1_\pi(\E,\E)\_0 \ar[r]& 0}
$$
and $\ext^2_\pi(I\udot,I\udot)^{\mathrm{fix}}_\perp\rt\sim p^*\ext^2_\pi(\E,\E)_0$. The latter vanishes by basechange and \eqref{smoo}. That is, \emph{there are no fixed obstructions}, so $\cP^\perp$ is \emph{smooth and its own virtual cycle}.

Since for $\EE=\iota_*\E$ there are only terms of weights 0 and 1 in \eqref{sick}, equivariant Serre duality shows that $N^{\vir}=T^*_{\cP^\perp}\otimes\t[-1]$. Therefore
$$
P^\perp_\alpha(n)\ =\ \int_{{\cP^\perp}}e\big(T^*_{\cP^\perp}\otimes\t\big)\ =\ (-1)^{\dim\cP^\perp}e(\cP^\perp).
$$
But this is precisely the answer $P_\alpha(n)$ \eqref{plainP} that the Behrend theory gives, since by smoothness $\chi^B_{\cP^\perp}\equiv(-1)^{\dim\cP^\perp}$. Since $h^{0,2}(S)=0$ by $\deg K_S<0$ and Serre duality, $\VW_\alpha$ and $\vw_\alpha$ are determined from these two sets of pair invariants $P^\perp_\alpha(n),\,P_\alpha(n)$ by the same formulae. (That is, if $h^{0,1}(S)=0$ then the formulae \eqref{long} and \eqref{sum} are the same; if $h^{0,1}(S)>0$ then the formulae \eqref{shorter} and \eqref{be} are the same.) We deduce that $P_\alpha^\perp(n)$ satisfies the equations of Conjecture \ref{conj} with $\VW_\alpha=\vw_\alpha$.
\end{proof}

\begin{rmk}
An alternative proof is to note that since semistable sheaves are supported on $S$, the moduli space $\cP^\perp_\alpha$ is compact. Since its perfect obstruction theory is symmetric, the invariants defined by virtual cycle or weighted Euler characteristic coincide \cite{Be}.
\end{rmk}

\subsection{\for{toc}{$K_S=0$}\except{toc}{$\mathbf{K_S=0}$}} \label{K3}

In an earlier version of this paper we sketched our reasoning for believing that Conjecture \ref{conj} should be true when $S$ is a K3 surface, with $\VW_\alpha=\vw_\alpha$ in this case. This has now been proved in \cite{MT}. Namely, \cite[Proposition 7.6]{MT} gives the following identity relating the virtual invariants $P^\perp$ \eqref{bowl} to the weighted Euler characteristic invariants $\widetilde P$ of \eqref{welo}.
\beq{it}
\sum P^\perp_\alpha(n)q^\alpha\ =\ -\log\left(1+\sum\widetilde P_\alpha(n)q^\alpha\right),
\eeq
where both sums are over all $\alpha\ne0$ which are multiples of a fixed primitive class $\alpha_0.$
Since the log on the right hand side of \eqref{it} turns  the formula \eqref{munch} into \eqref{shorter}, we indeed find that Conjecture \ref{conj} holds with $\VW_\alpha=\vw_\alpha$. \medskip

The idea of the proof of \eqref{it} is to start with $Y=S\times E$, where $E$ is an elliptic curve. To get nonzero invariants counting pairs we divide the moduli space by the translation action of $E$ and use Oberdieck's symmetric reduced obstruction theory \cite{Ob}. Since the moduli space is compact, the invariants defined by virtual cycle or weighted Euler characteristic coincide. In appropriate notation,
$$
P^\perp_\alpha(Y/E,n)\ =\ \widetilde P_\alpha(Y/E,n).
$$
On the left hand side we use Jun Li's degeneration formula for virtual cycles as $E$ degenerates to a rational nodal curve. This has a $\C^*$ action; applying virtual localisation ultimately gives the left hand side of \eqref{it}. On the right hand side we work with weighted Euler characteristics, using a simple gluing argument to compare the moduli spaces of Joyce-Song pairs supported set-theoretically on one K3 fibre of either $X$ or $Y$. An elementary calculation of Euler characteristics of configuration spaces of points on a punctured elliptic curve then gives the right hand side of \eqref{it}.
\medskip

Here we make do with a suggestive explicit calculation of the virtual theory $\VW_\alpha$ in some examples, to illustrate \eqref{it} and contrast with the calculations of the Kai theory $\vw_\alpha$ on K3 in Section \ref{Kthree}.

We work in $SU(r)$ Vafa-Witten theory with $\C^*$-fixed Higgs pairs which have the least possible degeneracy in their Higgs field (i.e. it is a Jordan block). Equivalently we work with $\C^*$-fixed torsion sheaves on $X=K_S$ with the largest possible scheme-theoretic support: the $r$ times thickening $rS$ of the zero section $S\subset X$.

Semistable sheaves with support $rS$ are all of the form
$$
\cE\ =\ (\pi^*I_Z)\otimes\cO_{rS}
$$
for some ideal sheaf $I_Z\in S^{[k]}$. Thus $E\cong I_Z\oplus I_Z\t^{-1}\oplus\cdots\oplus I_Z\t^{-(r-1)}$ with $\det E=\cO_S$ and class $\alpha=(r,rk)$, and
$$
H^0_X(\cE(n))\ \cong\ H^0_S(I_Z(n))\otimes\!\big(\C\oplus\t^{-1}\oplus\cdots\oplus\t^{-(r-1)}\big).
$$
Any $\C^*$-fixed section $s$ defining a Joyce-Song stable pair must lie in the first summand by stability, so $(\cP^\perp)^{\C^*}$ is a $\PP(H^0(I_Z(n)))$-bundle over $S^{[k]}$. Then by a similar calculation to \eqref{sick}, at a single point $I\udot=\{\cO_X(-n)\rt{s}\cE\}$ for simplicity, we find that in $\C^*$-equivariant K-theory 
\begin{multline*}
-R\Hom_X(I\udot,I\udot)\_\perp\ =\ \\
\frac{H^0(\cE(n))}{\aut(\cE)}-\left(\frac{H^0(\cE(n))}{\aut(\cE)}\right)^{\!\!*}\!\otimes\t
+\Ext^1_X(\cE,\cE)\_\perp-\Ext^2_X(\cE,\cE)\_\perp\,.
\end{multline*}
Replacing $\cE$ by its natural resolution $\pi^*I_Z(-rS)\to\pi^*I_Z$ gives
\begin{multline*}
R\Hom_X(\cE,\cE)\ =\ R\Hom_S(I_Z,I_Z)\otimes\!\big(\C\oplus\t^{-1}\oplus\cdots\oplus\t^{-(r-1)}\big)
\\ \oplus\ R\Hom_S(I_Z,I_Z)\otimes\!\big(\t^r\oplus\cdots\oplus\t\big)[-1],
\end{multline*}
so that
$$
\Ext^1_X(\cE,\cE)\_\perp\ \cong\ T_ZS^{[k]}\otimes\!\big(\C\oplus\t^{-1}\oplus\cdots\oplus\t^{-(r-1)}\big)
\ \oplus\ \big(\t^r\oplus\cdots\oplus\t^2\big),
$$
while $\Ext^2_X(\cE,\cE)\_\perp\cong\Ext^1_X(\cE,\cE)_\perp^*\otimes\t$. Now $\Aut(\cE)\cong\C^*\ltimes(\t^{-1}\oplus\cdots\oplus\t^{-(r-1)})$ and
$$
\frac{H^0(\cE(n))}{\aut(\cE)}\ \cong\ \frac{H^0(I_Z(n))}{\langle s\rangle}\otimes\!\big(\C\oplus\t^{-1}\oplus\cdots\oplus\t^{-(r-1)}\big)
$$
is $T_{\PP(H^0(I_Z(n)))}\otimes(\C\oplus\t^{-1}\oplus\cdots\oplus\t^{-(r-1)})$ when we work in a family using the $\cO(1)$ twisting of the universal section as in \eqref{twisting}. Putting everything together, the pairs invariant $P^\perp_\alpha(n)$ is
$$
\int_{(\cP^\perp)^{\C^*}}\frac{e(T^*\t)e(T^*\t^2)\cdots e(T^*\t^r)e(\t^{-1}\oplus\cdots\oplus\t^{-(r-1)})}
{e(T\t^{-1})e(T\t^{-2})\cdots e(T\t^{-(r-1)})e(\t^r\oplus\cdots\oplus\t^2)}\,,
$$
where $T=T_{(\cP^\perp)^{\C^*}}=T_{S^{[k]}}+T_{(\cP^\perp)^{\C^*}/S^{[k]}}$. Since $e(A^*)/e(A)=(-1)^{\rk A}$ some cancellation gives
\begin{multline*}
\int_{(\cP^\perp)^{\C^*}}(-1)^{p(r-1)}e(T^*\t^r)\frac{(-t)(-2t)\cdots(-(r-1)t)}{(2t)\cdots(rt)}
\ =\ \\
(-1)^{pr}e\big(\PP(H^0(I_Z(n)))\big)e\big(S^{[k]}\big)\frac{(-1)^{r-1}}r\,,
\end{multline*}
where $p=\dim(\cP^\perp)^{\C^*}=\chi(I_Z(n))-1+2k$.
Since $\chi(\alpha(n))=r\chi(I_Z(n))$ the final result is
\beq{fr}
P_\alpha^\perp(n)\ =\ (-1)^{\chi(\alpha(n))-1\,}\frac{\chi(\alpha(n))}{r^2}e\big(S^{[k]}\big).
\eeq
This satisfies Conjecture \ref{conj}. That is, it is an instance of our universal formula \eqref{shorter} which is the \emph{logarithm} of the Joyce-Song universal formula \eqref{munch} satisfied by the invariants $\vw_\alpha$. It contributes $e(S^{[k]})/r^2$ to the Vafa-Witten invariant $\VW_{r,rk}(S)$ and
$$
\sum_{k=0}^\infty\frac{e(S^{[k]})}{r^2}q^{rk}\ =\ \frac1{r^2}q^r\eta(q^r)^{-24}
$$
to its generating series.

This matches the second term of \cite[Equation 5.38]{VW} (with $g-1:=c_1^2(S)=0$), and the first term ($d=1$) of the generating series \eqref{generalr} of $\vw_\alpha$ invariants. However, the $\vw_\alpha$ calculation was somewhat different, with contributions from different components giving different answers for the pairs invariants, but then the different universal formulae (\ref{munch}, \ref{shorter}) these satisfy lead to the same Vafa-Witten invariants $\vw_\alpha=\VW_\alpha$.

This can be seen even in the simplest example with $k=0$ and $r=2$, so that $\alpha=(2,0)$. Then the moduli space of $\C^*$-fixed Joyce-Song pairs has two components --- one a space of sections of $\cO_{2S}(n)$, and one a Grassmannian $\Gr(2,H^0(\cO_S(n)))$ of pairs with underlying sheaf $\cO_S^{\oplus2}$. The virtual localisation calculation \eqref{fr} gives
\beq{nos}
P^\perp_\alpha(n)\ =\ \frac14(-1)^{\chi(\alpha(n))-1}\chi(\alpha(n))\ =\ -\frac14\chi(\alpha(n))
\eeq
for the first component and \emph{zero} for the second.\footnote{An easy calculation shows its $\C^*$-fixed obstruction bundle is the bundle of trace-free endomorphisms of the universal subbundle on $\Gr(2,H^0(\cO_S(n)))$. But this has $c_3=0$ so the localised virtual cycle vanishes.} By contrast, the topological Euler characteristic of the first and second components are
$$
\chi(\alpha(n))-\chi\!\left(\frac\alpha2(n)\right) \quad\mathrm{and}\quad \frac12\chi\!\left(\frac\alpha2(n)\right)\left(\chi\!\left(\frac\alpha2(n)\right)-1\right)
$$
respectively. ($\chi^B\equiv1$ here, by Remark \ref{kaistack}, so we can ignore the weighting.) These are very different from \eqref{nos}, as is their sum $P_\alpha(n)$. But putting $P^\perp_\alpha(n)$ into \eqref{munch} and $P_\alpha(n)$ into \eqref{shorter} respectively gives the same Vafa-Witten invariants $\VW_\alpha=\vw_\alpha$. \medskip

Combining the result \eqref{it} of \cite{MT} with our Toda-based calculations of Section \ref{Kvw3} we get the virtual localisation version of Theorem \ref{Kvw2}.

\begin{thm}
Conjecture \ref{conj} holds for $S$ a K3 surface. 
The resulting rank $r$ trivial determinant Vafa-Witten invariants have generating series
$$
\sum_{c_2}\VW_{r,c_2}q^{c_2}\ =\ \sum_{d|r}\frac{d}{r^2}q^r
\sum_{j=0}^{d-1}\eta\Big(e^{\frac{2\pi ij}d}q^{\frac r{d^2}}\Big)^{-24}.
$$
\end{thm}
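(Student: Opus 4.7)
The plan is to combine the key identity \eqref{it} from \cite{MT} with the Toda-based computation of $\vw_\alpha$ carried out in Theorem \ref{Kvw2}. First, on a K3 surface one has $h^{0,1}(S)=0=h^{0,2}(S)$, so it suffices to show that the virtual pairs invariants $P^\perp_\alpha(n)$ satisfy the full wall-crossing formula \eqref{long} with $\VW_\alpha=\vw_\alpha$; the universal form of this formula then lets us read off the individual numbers $\VW_{r,c_2}$ from the $\tP_\alpha(n)$ already computed by weighted Euler characteristics in Section \ref{Kthree}.

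The first step is to invoke \eqref{it}, which on a K3 surface relates the two generating series of pair invariants by
\[
\sum_{\alpha\neq 0}P^\perp_\alpha(n)\,q^\alpha\ =\ -\log\Bigl(1+\sum_{\alpha\neq 0}\tP_\alpha(n)\,q^\alpha\Bigr),
\]
where the sums run over positive multiples of a primitive class. The second step is to compare this with the Joyce-Song wall-crossing formula \eqref{munch} satisfied by the $\tP_\alpha(n)$ in terms of $\vw_\alpha$: exponentiating and taking logs precisely converts the multiplicative expression \eqref{munch} into the single-term identity \eqref{long}-style formula with $\VW_\alpha$ replaced by $\vw_\alpha$. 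That is, the $-\log$ extracts exactly the primitive (connected) part of the Joyce-Song sum, so that $P^\perp_\alpha(n)$ is forced to satisfy \eqref{long} with the choice $\VW_\alpha:=\vw_\alpha$. This proves Conjecture \ref{conj} for $S$ a K3.

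Having identified $\VW_\alpha=\vw_\alpha$, the generating series formula is immediate from Theorem \ref{Kvw2}, which already provided
\[
\sum_{c_2}\vw_{r,c_2}\,q^{c_2}\ =\ \sum_{d\mid r}\frac{d}{r^2}q^r\sum_{j=0}^{d-1}\eta\Bigl(e^{\frac{2\pi i j}{d}}q^{\frac{r}{d^2}}\Bigr)^{-24}.
\]

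The one subtle point, and what I would expect to be the main obstacle if one had not already cited \cite{MT}, is the passage from \eqref{it} to Conjecture \ref{conj}. One must verify that the combinatorial identity turning a product over partitions into its logarithm matches exactly the transition from Joyce-Song's formula \eqref{munch} to the conjectured formula \eqref{long}, including all the signs $(-1)^{\chi(\alpha_i(n))}$ and factorial factors $1/\ell!$. Once this bookkeeping is done, the rest is purely a matter of quoting \eqref{toda} (Toda's conjecture, proved in \cite{MT}) and G\"ottsche's formula as in the proof of Theorem \ref{Kvw2}.
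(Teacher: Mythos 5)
Your overall route is the same as the paper's: quote the identity \eqref{it} from \cite{MT}, convert it into the relevant case of Conjecture \ref{conj} with $\VW_\alpha=\vw_\alpha$, and then read off the generating series from Theorem \ref{Kvw2}. However, there is a genuine error at the start which propagates through your argument. On a K3 surface $h^{0,1}(S)=0$ but $h^{0,2}(S)=h^0(K_S)=1\ne0$, so the case of Conjecture \ref{conj} you must verify is the \emph{single-term} formula \eqref{shorter}, not the full wall-crossing sum \eqref{long}. Your own mechanism actually tells you this: the $-\log$ in \eqref{it} extracts precisely the ``connected'' ($\ell=1$) part of the exponential encoded by the Joyce-Song formula \eqref{munch}, and what comes out is $P^\perp_\alpha(n)=(-1)^{\chi(\alpha(n))-1}\chi(\alpha(n))\,\vw_\alpha(S)$, i.e.\ \eqref{shorter} with $\VW_\alpha=\vw_\alpha$ (the extra sign $(-1)^{h^0(K_S)}=-1$ per factor in \eqref{munch} is exactly what makes the signs match). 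The statement that ``$P^\perp_\alpha(n)$ is forced to satisfy \eqref{long}'' is false for non-primitive $\alpha$: for example in rank $2$ with $\alpha=(2,2k)$ the quadratic correction $\frac12\chi(\frac\alpha2(n))^2\vw_{\alpha/2}^2$ appears in $\tP_\alpha(n)$ but is absent from $P^\perp_\alpha(n)$ (compare \eqref{fr} with \eqref{peng}). Indeed the entire point of the two branches of Conjecture \ref{conj} is that when $h^{0,2}>0$ the virtual theory satisfies the logarithm of the Joyce-Song universal formula.

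Once the correct branch \eqref{shorter} is targeted, the rest of your argument is fine and agrees with the paper: \eqref{it} gives $\VW_\alpha=\vw_\alpha$, and the generating series is then exactly the one computed by Toda's formula and G\"ottsche's formula in Theorem \ref{Kvw2}. So the conclusion survives, but you should correct the Hodge number claim and replace every appeal to \eqref{long} by \eqref{shorter}.
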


\subsection{\for{toc}{$K_S>0$}\except{toc}{$\mathbf{K_S>0}$}}
Here we describe a rather trivial calculation on general type surfaces $S$. Nonetheless it is again suggestive that the result takes the correct form to satisfy our Conjecture. 

Take a surface $S$ with $h^{0,1}(S)=0$ and $h^{0,2}(S)>0$ and charge $\alpha=(2,0,0)$. There is a $\PP(\Gamma(K_S))\ni[\sigma]$ of strictly semistable $\C^*$-fixed trivial determinant trace-free Higgs pairs of the form
$$
E\ =\ \cO_S\oplus\cO_S\t^{-1}, \qquad \phi=\mat00\sigma0.
$$
We use the same deformation theory of Higgs pairs as in \cite[Section 8.1]{TT1}.
In the obvious notation, the map $[\ \cdot\ ,\phi]\colon\Hom(E,E)\to\Hom(E,E\otimes K_S)$ is
\begin{align*}
\mat{\C}{\t}{\t^{-1}}{\C}\,&\To\ \mat{\Gamma(K_S)\t}{\Gamma(K_S)\t^2}{\Gamma(K_S)}{\Gamma(K_S)\t}, \\
\mat abcd\ &\Mapsto\ \mat{b\sigma}0{(d-a)\sigma}{-b\sigma}.
\end{align*}
Passing to the corresponding trace-free groups gives a map with kernel $\End_0(E,\phi)=\t^{-1}$ and cokernel
$$
\Ext^1(\cE_\phi,\cE_\phi)\_\perp\ =\ \Gamma(K_S|_C)\t\oplus\Gamma(K_S)\t^2\oplus\Gamma(K_S|_C),
$$
where $C\subset S$ is the divisor of $\sigma$. Done properly, over the family $\PP(\Gamma(K_S))$ with the twisting $s\in\Gamma\big(K_S\boxtimes\cO_{\PP(\Gamma(K_S))}(1)\big)$ as in \eqref{twisting}, this is
\beq{extp}
T_{\PP(\Gamma(K_S))}\t\ \oplus\ \Gamma(K_S)\!\otimes\!\cO_{\PP(\Gamma(K_S))}(1)\t^2\ \oplus\ T_{\PP(\Gamma(K_S))}.
\eeq
Now consider the Joyce-Song pairs $I\udot=\{\cO_X(-n)\rt{s}\cE_\phi\}$. Their moduli space $\cN^\perp$ restricts over the $\C^*$-fixed moduli space $\PP(\Gamma(K_S))$ to a bundle with fibre the quotient of
\beq{dubul}
\PP\big(H^0(\cO_S(n))\oplus H^0(\cO_S(n))\t^{-1}\big)\take\PP\big(H^0(\cO_S(n))\t^{-1}\big)
\eeq
by the obvious action of $\t^{-1}$. Just as in \eqref{tanh} we find that
$$
\Ext^1(I\udot,I\udot)\_\perp\ =\ T_{\cN^\perp\!/\;\PP(\Gamma(K_S))}
\,\oplus\,\Ext^1(\cE_\phi,\cE_\phi)\_\perp.
$$
Combining this with \eqref{extp} and \eqref{dubul}, we find that at a $\C^*$-fixed Joyce-Song pair $s=(s_1,0)$ the tangent space $\Ext^1(I\udot,I\udot)\_\perp$ is
\begin{align} \nonumber
T_{\PP(H^0(\cO_S(n)))}\ &\oplus\ T_{\PP(H^0(\cO_S(n)))}\t^{-1} \\ \label{mess} &\oplus\ 
T_{\PP(\Gamma(K_S))}\t\ \oplus\ \Gamma(K_S)\!\otimes\!\cO_{\PP(\Gamma(K_S))}(1)\t^2\ \oplus\ T_{\PP(\Gamma(K_S))}.
\end{align}
The first and last terms give the fixed tangent space expected. Recalling the Serre duality $\Ext^2(I\udot,I\udot)\_\perp
\cong\Ext^1(I\udot,I\udot)^*_\perp\t$, we find the third term gives a fixed obstruction bundle $T^*_{\PP(\Gamma(K_S))}$. The virtual cycle is therefore its Euler class
\beq{lst}
\big[(\cP^\perp)^{\C^*}\big]^{\vir}\ =\ (-1)^{h^0(K_S)-1}h^0(K_S)\big[\PP(H^0(\cO_S(n)))\big],
\eeq
where $\PP(H^0(\cO_S(n)))$ is the fibre over a point of $\PP(\Gamma(K_S))$. On these fibres the moving part of \eqref{mess} simplifies to
$$
T_{\PP(H^0(\cO_S(n)))}\t^{-1}\ \oplus\ 
\t^{\oplus\,h^0(K_S)-1}\ \oplus\ (\t^2)^{\oplus\,h^0(K_S)}.
$$
Similarly the moving part of $\Ext^2(I\udot,I\udot)\_\perp
\cong\Ext^1(I\udot,I\udot)^*_\perp\t$ is
$$
T^*_{\PP(H^0(\cO_S(n)))}\t\ \oplus\ 
T^*_{\PP(H^0(\cO_S(n)))}\t^2\ \oplus\ 
(\t^{-1})^{\oplus\,h^0(K_S)}\ \oplus\ \t^{\oplus\,h^0(K_S)-1}.
$$
Together these give the virtual normal bundle, with equivariant Euler class
\beqa
e(N^{\vir}) &=& \frac{e\big(T_{\PP(H^0(\cO_S(n)))}\t^{-1}\big)\cdot t^{h^0(K_S)-1}\cdot(2t)^{h^0(K_S)}}{e\big(T^*_{\PP(H^0(\cO_S(n)))}\t\big)e\big(T^*_{\PP(H^0(\cO_S(n)))}\t^2\big)\cdot(-t)^{h^0(K_S)}\cdot t^{h^0(K_S)-1}} \\
&=& (-1)^{\chi(\cO_S(n))-1+h^0(K_S)}2^{h^0(K_S)}\frac1{e\big(T^*_{\PP(H^0(\cO_S(n)))}\t^2\big)}\,.
\eeqa
Integrating $1/e(N^{\vir})$ over the virtual cycle \eqref{lst} therefore gives
$$
P^\perp_\alpha(n)\ =\ -2^{-h^0(K_S)}h^0(K_S)\chi(\cO_S(n)).
$$
Since $\chi(\alpha(n))=2\chi(\cO_S(n))$ we see that this fits our conjecture \eqref{shorter} perfectly with contribution
\beq{end}
\VW_\alpha\ =\ \frac{h^0(K_S)}{2^{\;h^0(K_S)+1}}\,.
\eeq
The other $\C^*$-fixed component contains only (Joyce-Song pairs for) the semistable bundle $\cO_S^{\oplus2}$ with $\phi=0$, and contributes nothing due to the trivial $H^0(K_S)$ piece of the obstruction space $\Ext^2_\perp$. Therefore $P^\perp_\alpha(n)$ has \emph{no terms quadratic in $\chi(\alpha(n))$}, as predicted by the conjecture and in contrast to the invariants $P_\alpha(n)$ defined by weighted Euler characteristic. Finally we note that since $h^{0,1}(S)=0$, the denominator of \eqref{end} is $2^{\chi(\cO_S)}$, which appears also in the denominator of the first line of \cite[Equation 5.37]{VW}.

\bibliographystyle{halphanum}
\bibliography{References}

\begin{thebibliography}{MNVW}

\bibitem[AOSV]{AOSV} M. Aganagic, H. Ooguri, N. Saulina and C. Vafa, \emph{Black Holes, q-Deformed 2d Yang-Mills,
and Non-perturbative Topological Strings}, Nucl. Phys. B \textbf{715} (2005), 304--348.  \href{http://arxiv.org/abs/hep-th/0411280}{hep-th/0411280}.

\bibitem[ABMP]{ABMP} S. Alexandrov, S. Banerjee, J. Manschot and B. Pioline, \emph{Multiple D3-instantons and mock modular forms I}, Comm. Math. Phys. \textbf{353} (2017), 379--411. \href{http://arxiv.org/abs/1605.05945}{arXiv:1605.05945}.

\bibitem[Be]{Be} K.~Behrend, \textit{Donaldson-Thomas invariants via microlocal geometry}, Ann. of
Math. \textbf{170} (2009), 1307--1338. \href{http://arxiv.org/abs/math/0507523}{math.AG/0507523}.

\bibitem[BBS]{BBS} K. Behrend, J. Bryan and B. Szendr\H oi, \emph{Motivic degree zero Donaldson-Thomas invariants}, Invent.~Math.~\textbf{192} (2013), 111--160.
\href{http://arxiv.org/abs/0909.5088}{arXiv:0909.5088}.

\bibitem[BBBJ]{BBBJ} O.~Ben-Bassat, C.~Brav, V.~Bussi and D.~Joyce, \emph{A `Darboux Theorem' for shifted symplectic structures on derived Artin stacks, with applications}, Geom. Topol. \textbf{19} (2015), 1287--1359. \href{http://arxiv.org/abs/1312.0090}{arXiv:1312.0090}.

\bibitem[Da]{Da} B. Davison, \emph{The critical CoHA of a quiver with potential}, Quarterly Jour. of Math. (2017), 1--69. 
\href{http://arxiv.org/abs/1311.7172.}{arXiv:1311.7172}.

\bibitem[dB$^+$]{dB} J. De Boer, M. Cheng, R. Dijkgraaf, J. Manschot and E. Verlinde, \emph{A Farey Tail for Attractor Black Holes}, J. High Energy Phys. \textbf{2006} (11) 024.  \href{http://arxiv.org/abs/hep-th/0608059}{hep-th/0608059}.

\bibitem[DeM]{DeM} F. Denef and G. Moore, \emph{Split States, Entropy Enigmas, Holes and Halos}, J. High The M5-Brane Elliptic Genus: Modularity and
BPS States Energy Phys. \textbf{2011} (11) 129. \href{http://arxiv.org/abs/hep-th/0702146}{hep-th/0702146}.

\bibitem[DM]{DM} R.~Dijkgraaf and G.~Moore, \emph{Balanced topological field theories}, Comm. Math. Phys. \textbf{185} (1997), 411--440. \href{http://arxiv.org/abs/hep-th/9608169}{hep-th/9608169}.

\bibitem[DPS]{DPS} R. Dijkgraaf, J-S. Park and B. Schroers, \emph{$N=4$ supersymmetric Yang-Mills theory on a K\"ahler surface}, \hepth{9801066}.

\bibitem[GGP]{GGP} A. Gadde, S. Gukov and P. Putrov,
\emph{Fivebranes and 4-manifolds}, Arbeitstagung, Bonn 2013, 155--245. Springer 2016. \href{http://arxiv.org/abs/1306.4320}{arXiv:1306.4320}.

\bibitem[GaSY]{GaSY} D. Gaiotto, A. Strominger and X. Yin, \emph{The M5-Brane Elliptic Genus: Modularity and
BPS States}, J. High Energy Phys. \textbf{2007} (8) 070.
\href{http://arxiv.org/abs/hep-th/0607010}{hep-th/0607010}.

\bibitem[GSY1]{GSY1} A.~Gholampour, A.~Sheshmani and S.-T. Yau, \emph{Nested Hilbert schemes on surfaces: Virtual fundamental class}, preprint \href{http://arxiv.org/abs/1701.08899}{arXiv:1701.08899}.

\bibitem[GSY2]{GSY2} A.~Gholampour, A.~Sheshmani and S.-T. Yau, \emph{Localized Donaldson-Thomas theory of surfaces}, preprint \href{http://arxiv.org/abs/1701.08902}{arXiv:1701.08902}.

\bibitem[GT]{GT} A.~Gholampour and R. P. Thomas, \emph{Degeneracy loci, virtual cycles and nested Hilbert schemes}, preprint \arXiv{1709.06105}.

\bibitem[GK]{GK} L.~G\"ottsche and M.~Kool, \emph{Virtual refinements of the Vafa-Witten formula}, preprint \arXiv{1703.07196}.

\bibitem[GP]{GP} T.~Graber and R.~Pandharipande, \textit{Localization of virtual classes}, Invent.~Math.~{\bf{135}} (1999), 487--518. \href{http://arxiv.org/abs/alg-geom/9708001}{alg-geom/9708001}.

\bibitem[Ji]{Ji} Y.~Jiang \textit{Note on MacPherson's local Euler obstruction}, preprint 
\href{http://arxiv.org/abs/1412.3720}{arXiv:1412.3720}.

\bibitem[JT]{JT} Y.~Jiang and R. P.~Thomas, \textit{Virtual signed Euler characteristics}, Jour. Alg. Geom. \textbf{26} (2017), 379--397. \href{http://arxiv.org/abs/1408.2541}{arXiv:1408.2541}.

\bibitem[Jo1]{Jo1} 
D. Joyce, \emph{Motivic invariants of Artin stacks and stack functions}, Quart.
J. Math. \textbf{58} (2007), 345--392.
\href{http://arxiv.org/abs/math/0509722}{math.AG/0509722}.

\bibitem[Jo2]{Jo2} 
D. Joyce, \emph{Configurations in abelian categories. II. Ringel-Hall algebras},  Adv. Math. \textbf{210} (2007), 635--706. 
\href{http://arxiv.org/abs/math/0503029}{math.AG/0503029}.

\bibitem[Jo3]{Jo3} 
D. Joyce, \emph{Configurations in abelian categories. III. Stability conditions and
identities}, Adv. Math. \textbf{215} (2007), 153--219. \href{http://arxiv.org/abs/math/0410267}{math.AG/0410267}.

\bibitem[JS]{JS} D.~Joyce and Y.~Song, \textit{A theory of generalized {D}onaldson-{T}homas invariants}, Memoirs of the AMS \textbf{217}, no. 1020 (2012). \href{http://arxiv.org/abs/0810.5645}{arXiv:0810.5645}.

\bibitem[KaLe]{KaLe} D.~Kaledin and M.~Lehn, \textit{Local structure of hyperk\"ahler singularities in O\rq Grady\rq s examples}, Mosc. Math. Jour. \textbf{7} (2007), 653--672. \href{http://arxiv.org/abs/math/0405575}{math.AG/0405575}.

\bibitem[KL1]{KL1} Y.-H.~Kiem and J.~Li, \textit{Localizing virtual cycles by cosections}, Jour. A.M.S. \textbf{26} (2013), 1025--1050. \href{http://arxiv.org/abs/1007.3085}{arXiv:1007.3085}.

\bibitem[KL2]{KL2} Y.-H.~Kiem and J.~Li, \textit{A wall crossing formula of Donaldson-Thomas invariants without Chern-Simons functional}, Asian Jour. Math. \textbf{17} (2013), 63--94. \arXiv{0905.4770}.

\bibitem[KS]{KS} M.~Kontsevich and Y.~Soibelman, \textit{Stability structures, motivic Donaldson-Thomas invariants and cluster transformations}, preprint \arXiv{0811.2435}.

\bibitem[Ko]{Ko} M. Kool, \emph{Fixed point loci of moduli spaces of sheaves on toric varieties}, Adv. Math. \textbf{227} (2011), 1700--1755. \arXiv{0810.0418}.

\bibitem[Kr]{Kr} A. Kresch, \emph{Cycle groups for Artin stacks}, Invent.~Math.~\textbf{138} (1999), 495--536. 
\href{http://arxiv.org/abs/math/9810166}{math.AG/9810166}.

\bibitem[Ma1]{Ma1} J. Manschot, \emph{Stability and Duality in N=2 Supergravity}, Comm. Math. Phys. \textbf{299} (2010), 651--676. \href{http://arxiv.org/abs/0906.1767}{arXiv:0906.1767}.

\bibitem[Ma2]{Ma2} J. Manschot, \emph{BPS invariants of $N=4$ gauge theory on Hirzebruch surfaces}, Comm. Number Theory and Physics, \textbf{6} (2012), 497--516. \arXiv{1103.0012}.

\bibitem[MSW]{MSW} J. Maldacena, A. Strominger and E. Witten, \emph{Black Hole Entropy in M-theory}, J. High Energy Phys. \textbf{1997} (12) 002. \href{http://arxiv.org/abs/hep-th/9711053}{hep-th/9711053}.

\bibitem[MT]{MT} D. Maulik and R. P. Thomas, \emph{Sheaf counting on local K3 surfaces}, \arXiv{1806.02657}.

\bibitem[MNVW]{MNVW} J. Minahan, D. Nemeschansky, C. Vafa and N. Warner, \emph{E-strings and N=4 topological Yang-Mills theories}, Nucl. Phys. \textbf{B 527} (1998), 581--623. \hepth{9802168}.

\bibitem[Mo]{Mo} T.~Mochizuki, \emph{A theory of the invariants obtained from the moduli stacks of stable
objects on a smooth polarized surface}, \mathAG{0210211}.

\bibitem[Ob]{Ob} G. Oberdieck, \emph{On reduced stable pair invariants}, 
Math. Z. {\bf 289} (2018), 323--353. \arXiv{1605.04631}.

\bibitem[OP]{OP} G. Oberdieck and R. Pandharipande, \emph{Curve counting on $K3\times E$, the Igusa cusp form
$\chi\_{10}$, and descendent integration},
in K3 surfaces and their moduli, C. Faber, G. Farkas,
and G. van der Geer, eds., Birkhauser Prog. in Math. \textbf{315} (2016), 245--278. \arXiv{1411.1514}.

\bibitem[STV]{STV} T.~Sch\"urg, B.~To\"en and G.~Vezzosi, \textit{Derived algebraic geometry, determinants of perfect complexes, and applications to obstruction theories for maps and complexes}, J. Reine Angew. Math. \textbf{702} (2015), 1--40. \href{http://arxiv.org/abs/1102.1150}{arXiv:1102.1150}.

\bibitem[Si]{Si} C.~Simpson, \emph{Constructing variations of Hodge structure using Yang-Mills theory and applications to uniformization}, Jour. AMS. \textbf{1} (1988), 867--918.

\bibitem[Ta1]{Ta1} Y.~Tanaka, \emph{Stable sheaves with twisted sections and the Vafa-Witten equations on smooth projective surfaces}, Manuscripta Math. \textbf{146} (2015), 351--363. \href{http://arxiv.org/abs/1312.2673}{arXiv:1312.2673}.

\bibitem[Ta2]{Ta2} Y.~Tanaka, \emph{On the singular sets of solutions to the {K}apustin-{W}itten equations and the Vafa-Witten ones on compact K\"ahler surfaces},  to appear in Geometriae Dedicata. \href{http://arxiv.org/abs/1510.07739}{arXiv:1510.07739}.

\bibitem[TT1]{TT1} Y.~Tanaka and R. P. Thomas, \emph{Vafa-Witten invariants for projective surfaces I: stable case}, \arXiv{1702.08487}.

\bibitem[To1]{To1} Y.~Toda, \emph{Stable pairs on local K3 surfaces}, Jour. Diff. Geom. \textbf{92} (2012), 285--370. \href{http://arxiv.org/abs/1103.4230}{arXiv:1103.4230}.

\bibitem[To2]{To2} Y.~Toda, \emph{Flops and the S-duality conjecture}, Duke Math. Jour. \textbf{164} (2015), 2293--2339. \href{http://arxiv.org/abs/1311.7476}{arXiv:1311.7476}.

\bibitem[To3]{To3} Y.~Toda, \emph{Generalized Donaldson-Thomas invariants on the local projective plane},
Jour. Diff. Geom. \textbf{106} (2017), 341--369.
\href{http://arxiv.org/abs/1405.3366}{arXiv:1405.3366}.

\bibitem[TVa]{TVa} B. To\"en and M. Vaquie, \emph{Moduli of objects in dg-categories}, Annales de l'\'E.N.S.
\textbf{40} (2007), 387--444. \href{http://arxiv.org/abs/math/0503269}{math.AG/0503269}.

\bibitem[VW]{VW} C. Vafa and E. Witten, \emph{A strong coupling test of S-duality}, Nucl.
Phys. B \textbf{432} (1994), 484--550. \hepth{9408074}.


\end{thebibliography}

\end{document}